\let\C\undefined
\newcommand*\bigcdot{\mathpalette\bigcdot@{.5}}
\newcommand*\bigcdot@[2]{\mathbin{\vcenter{\hbox{\scalebox{#2}{$\m@th#1\bullet$}}}}}
\newcommand{\R}{\mathbb R}
\newcommand{\N}{\mathbb N}
\newcommand{\Q}{\mathbb Q}
\newcommand\medwedge{\scalerel*{\bigwedge}{H_.}}
\renewcommand{\leq}{\leqslant}
\DeclareMathOperator{\tr}{tr}
\newcommand{\dif}{\mathrm{d}}
\DeclareMathOperator{\diver}{div}
\renewcommand{\dif}{\operatorname{d}\!}
\newcommand{\lebe}{\operatorname{L}}
\newcommand{\sobo}{\operatorname{W}}
\newcommand{\imag}{\operatorname{i}}
\newcommand{\locc}{\operatorname{loc}}
\newcommand{\hold}{\operatorname{C}}
\newcommand{\bv}{\operatorname{BV}}
\newcommand{\ball}{B}
\newcommand{\di}{\operatorname{div}}
\newcommand{\A}{\mathbb{A}}
\newcommand{\bd}{\operatorname{BD}}
\newcommand{\spt}{\operatorname{spt}}
\newcommand{\lin}{\operatorname{Lin}}
\newcommand{\besov}{\operatorname{B}}
\newcommand{\C}{\mathbb{C}}
\newcommand{\Lin}{\operatorname{Lin}}
\renewcommand{\Re}{\operatorname{Re}}
\renewcommand{\Im}{\operatorname{Im}}
\numberwithin{equation}{section}
\newtheorem{theorem}{Theorem}[section]
\newtheorem{lemma}[theorem]{Lemma}
\newtheorem{proposition}[theorem]{Proposition}
\newtheorem{corollary}[theorem]{Corollary}
\theoremstyle{remark}
\newtheorem{remark}[theorem]{Remark}
\newtheorem{example}[theorem]{Example}
\numberwithin{equation}{section}
\begin{document}
	
	\title[Boundary $\lebe^1$-estimates]{Boundary ellipticity and\\ limiting $\lebe^1$-estimates on halfspaces}
	\author[F. Gmeineder]{Franz Gmeineder}
	\author[B. Rai\cb{t}\u{a}]{Bogdan Rai\cb{t}\u{a}}
	\author[J. Van Schaftingen]{Jean Van Schaftingen}
	\address[F.~Gmeineder]{Universit\"{a}t Konstanz, Fachbereich Mathematik und Statistik, Universit\"{a}tsstra\ss e 10, 78464 Konstanz, Germany}
	\address[B.~Rai\cb{t}\u{a}]{Scuola Normale Superiore, Centro di Ricerca Matematica Ennio De Giorgi, P.za dei Cavalieri, 3, 56126 Pisa, Italy
	}
	\address[J.~Van Schaftingen]{Universit\'{e} catholique de Louvain, Institut de Recherche en Mathématique et Physique, Chemin du Cyclotron 2 bte L7.01.02, 1348 Louvain-la-Neuve, Belgium}
	\thanks{\emph{Keywords:} Boundary ellipticity, cancelling operators, $\mathbb{C}$-ellipticity, trace theorem, Uspenski\u{\i}'s theorem, Aronszajn's coercive inequalities, $\lebe^{1}$-estimates}
	\makeatletter
	
	\@namedef{subjclassname@2020}{\textup{2020} Mathematics Subject Classification}
	
	\makeatother
	\subjclass[2020]{26D10, 35E05, 35G15, 35G35}
	

	\begin{abstract}
		We identify necessary and sufficient conditions on $k$th order differential operators $\mathbb{A}$ in terms of a fixed halfspace $H^+\subset\mathbb{R}^n$ such that the Gagliardo--Nirenberg--Sobolev inequality
		$$
		\|D^{k-1}u\|_{\mathrm{L}^{\frac{n}{n-1}}(H^+)}\leq c\|\mathbb{A} u\|_{\mathrm{L}^1(H^+)}\quad\text{for }u\in\mathrm{C}^\infty_c (\mathbb{R}^{n},V)
		$$
		holds. This comes as a consequence of sharp trace theorems on $H=\partial H^+$.
	\end{abstract}
	\maketitle

	
	\section{Introduction}
	Let $\mathbb{A}$ be a homogeneous, linear, vectorial \(k\)th order partial differential operator with constant  coefficients between the finite dimensional real inner product spaces $V$ and $W$. That is, $\A$ has a  representation
	\begin{align}\label{eq:A}
		\A u(x)&=\sum_{|\alpha|=k} A_\alpha \partial^\alpha u(x),\qquad u\in\hold_{c}^{\infty}(\R^{n},V),
	\end{align}
	where $A_\alpha\in \lin(V,W)$ for $\alpha\in\N_0^n$, $|\alpha|=k$. In this context, a classical question is the validity of the estimate
	\begin{equation}
		\label{eq_queFaengeez3aeg5euYaeD3b}
		\|D^k u\|_{\lebe^p(H^{+})}\leq c\|\A u\|_{\lebe^p(H^{+})}\quad \text{for }u\in\hold_{c}^\infty(\overline{H^{+}},V)
	\end{equation}
	for some given open halfspace $H^{+}\subset\R^{n}$ with a constant $c>0$ independent of $u$. A positive answer implies that replacing in the Sobolev norm the full derivative \(D^k u\) by \(\A u\) is an equivalent norm on the Sobolev space, which is well adapted to problems in the Calculus of Variations and partial differential equations involving \(\A\); most notably, such coercive inequalities lead to well-posedness theorems in non-linear elasticity or fluid mechanics, see e.g. \cite{Friedrichs,FuchsSeregin,Korn}. By standard techniques such as flattening the boundary, one can then reduce the corresponding estimates on smooth domains and for operators with variable coefficients to the halfspace case.
	
	When \(1 < p < \infty\), the cases where \eqref{eq_queFaengeez3aeg5euYaeD3b} holds can be characterized following Aronszajn \cite[Thm.~V]{Aronszajn}, in terms of the symbol of \(\A\) defined as
	\begin{align*}
		\begin{split}
			\A(\xi)&=\sum_{|\alpha|=k} \xi^\alpha A_\alpha,\quad\text{for }\xi\in\R^n,
		\end{split}
	\end{align*}
	as follows: Estimate \eqref{eq_queFaengeez3aeg5euYaeD3b} holds if and only if both of the following conditions hold:
	\begin{enumerate}
		\item\label{itm:ell} $\A$ is \emph{elliptic} (in the interior), i.e., $\ker_\R\A(\xi)=\{0\}$ for all   $\xi\in\R^n\setminus\{0\}$;
		\item\label{itm:bell} $\A$ is \emph{boundary elliptic}, by which we mean that we have $\ker_{\mathbb{C}}\A(\xi+\imag \nu)=\{0\}$ for all $\xi\in\R^{n}$; here, $\nu$ denotes a unit normal to the hyperplane $H:=\partial H^{+}$.
	\end{enumerate}
	
	Different from the Calder\'{o}n--Zygmund theory  \cite{CalderonZygmund,CalderonZygmund1} for $1<p<\infty$, Ornstein has shown that when $p=1$, then \eqref{eq_queFaengeez3aeg5euYaeD3b} does  not hold unless one has the trivial \emph{pointwise} estimate \(\lvert D^k u (x)\rvert  \leq c \lvert \A u (x)\rvert\) \cite{Ornstein,KK,FG}. This obstruction is also referred to as \emph{Ornstein's non-inequality} and we  emphasize that there is no effect of the boundary here: The condition is necessary for \eqref{eq_queFaengeez3aeg5euYaeD3b} to hold for compactly supported functions. Similar results hold for $p=\infty$ \cite{Mityagin,dLM}.
	
	Consequently, strong $\lebe^1$ estimates, if possible, must bound weaker derivatives. In the case of interior estimates, building on the fundamental work of Bourgain--Brezis \cite{BB03,BB07,VS_div} and its higher-order generalization \cite{VS_jems}, it was shown in \cite{VS} that a Gagliardo--Nirenberg inequality
	\begin{align}\label{eq:VS}
		\|D^{k-1} u\|_{\lebe^{\frac{n}{n-1}}(\R^n)}\leq c\|\A u\|_{\lebe^1(\R^n)} \quad\text{for }u\in\hold^\infty_c(\R^n,V)
	\end{align}
	holds if and only if the operator $\A$ is elliptic and the equation $\A u=\delta_0w_0$ has no solution for $w_0\in W\setminus\{0\}$. This additional assumption, termed \emph{cancellation}, can be expressed algebraically as
	\begin{align}\tag{C}\label{eq:canc}
		\bigcap_{\xi\in\mathbb{S}^{n-1}}\mathrm{im\,}\A(\xi)=\{0\}
	\end{align}
	via the Fourier transform. This cancellation condition plays an important role in endpoint estimates of Hardy type and into Lorentz, Besov and Triebel--Lizorkin spaces, see for instance \cite{R_tams,R_CR,RS,SVS,HS,Stolyarov_Linfty,DieGm, Stolyarov,VS_pams}.
	
	When \(1 < p < \infty\), the condition on the boundary \ref{itm:bell} is a Lopatinski\u{\i}--Shapiro or covering condition \cite{Lopatinskii}. Such conditions were used successfully by Agmon--Douglis--Nirenberg and H\"ormander, among many others, to provide a satisfactory theory for determined and overdetermined elliptic systems \cite{ADN1,ADN2,Hormander}. 
	
	When \(p = 1\), estimates were established from different perspectives in \cite{Denk}, and in \cite{BrVS} for Poisson's equation with divergence free data. In \cite{GR}, it was shown that if \(\Omega \subset \R^n\) is a bounded domain with a smooth boundary \(\partial \Omega\), then the inequality
	\begin{align*}
		\|D^{k-1}u\|_{\lebe^{\frac{n}{n-1}}(\Omega)}\leq c\left(\|\A u\|_{\lebe^1(\Omega)}+\|u\|_{\lebe^1(\Omega)}\right)\quad \text{for }u\in\hold^\infty(\overline\Omega,V)
	\end{align*}
	is equivalent with boundary ellipticity of $\A$ in \textbf{all} directions, $$
	\ker_{\mathbb{C}}\A(\xi)=\{0\}\quad\text{for all }\xi\in\mathbb{C}^n\setminus\{0\}.$$
	We say that these operators are \emph{$\mathbb{C}$-elliptic}. This condition also plays a crucial role in establishing trace estimates in Lebesgue or Besov spaces \cite{BDG,DieGme21,GRVS}. However, little is
	known towards a comprehensive theory of global estimates for elliptic boundary value
	problems with $\lebe^1$ data.
	
	In view of the above discussion, the present paper gives a complete answer to the question of proving the Sobolev analogue of Aronszajn's result \eqref{eq_queFaengeez3aeg5euYaeD3b} for $p=1$. In the following, we denote   for $\nu\in\mathbb{S}^{n-1}$
	\begin{align}\label{eq:H_nu}
		H_{\nu}\coloneqq\{x\in\R^{n}\colon x\cdot\nu=0\}\quad\text{and}\quad
		H_{\nu}^{\pm}\coloneqq\{x\in\R^{n}\colon\mathrm{sgn}(x\cdot\nu)=\pm 1\}
	\end{align}
	the hyperplane with normal $\nu$ together with the corresponding adjacent halfspaces
	and we note that, in this terminology, $\nu$ is the inward unit normal to $\partial H_{\nu}^{+}$. The above classification problem is solved by the following theorem, displaying the first main result of the present paper: 
	\begin{theorem}\label{thm:main}
		Let \(n \ge 2\) and $\A$ be a $k$th order differential operator as in \eqref{eq:A}. Then the  following are equivalent:
		\begin{enumerate}
			\item\label{itm:main_a} The operator $\A$ is \emph{elliptic} (i.e., $\ker_\R\A(\xi)=\{0\}$ for $\xi\in\R^n\setminus\{0\}$) and \emph{boundary elliptic} in direction $\nu\in\mathbb S^{n-1}$ (i.e., $\ker_{\mathbb{C}}\A(\xi+\imag\nu)=\{0\}$ for all $\xi\in \R^{n}$).
			\item\label{itm:main_b} There exists a constant $c>0$ such that the Sobolev estimate
			\begin{align*}
				\|D^{k-1}u\|_{\lebe^{\frac{n}{n-1}}(H^+_\nu)}\leq c\|\A u\|_{\lebe^1(H^+_\nu)}
			\end{align*}
			holds for all $u\in\hold^\infty_c(\R^n,V)$.
		\end{enumerate}
	\end{theorem}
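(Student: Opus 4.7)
The plan is to establish the equivalence of \ref{itm:main_a} and \ref{itm:main_b} by combining Van Schaftingen's interior Gagliardo--Nirenberg--Sobolev inequality \cite{VS} with an extension argument exploiting boundary ellipticity in direction $\nu$. I address the two implications in turn.

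For the necessity \ref{itm:main_b}$\Rightarrow$\ref{itm:main_a}, I would first deduce ellipticity by restricting to test functions $u\in\hold^\infty_c(H^+_\nu,V)$; since such $u$ extend by zero to functions on $\R^n$ without changing any norm, the halfspace estimate \ref{itm:main_b} reduces to the full-space estimate \eqref{eq:VS}, forcing ellipticity (and, in fact, cancellation) of $\A$. For boundary ellipticity, I argue by contradiction: if there exist $\xi_0\in\R^n$ and $v_0\in V\zeroset$ with $\A(\xi_0+\imag\nu)v_0=0$, then $U(x)\coloneqq\e^{\imag\xi_0\cdot x-x\cdot\nu}v_0$ solves $\A U=0$ on $\R^n$ and is exponentially decaying into $H^+_\nu$. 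Setting $u_\lambda(x)\coloneqq\chi(x)U(\lambda x)$ for a fixed cutoff $\chi\in\hold_c^\infty(\R^n)$ equal to $1$ near the origin, a direct scaling analysis shows that $\|D^{k-1}u_\lambda\|_{\lebe^{n/(n-1)}(H^+_\nu)}$ grows strictly faster in $\lambda$ than $\|\A u_\lambda\|_{\lebe^1(H^+_\nu)}$ (the former is supported on a layer of width $\sim 1/\lambda$ adjacent to $H$ where $U(\lambda\cdot)$ is of order one, whereas the latter is generated only by commutators with $\chi$ living away from $H$), contradicting the putative inequality.

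For the sufficiency \ref{itm:main_a}$\Rightarrow$\ref{itm:main_b}, the strategy is to construct an extension $\tilde u$ of $u$ to $\R^n$ with $\tilde u=u$ on $H^+_\nu$, $\|D^{k-1}\tilde u\|_{\lebe^{n/(n-1)}(H^-_\nu)}\leq c\|D^{k-1}u\|_{\lebe^{n/(n-1)}(H^+_\nu)}$, and crucially $\|\A\tilde u\|_{\lebe^1(\R^n)}\leq c\|\A u\|_{\lebe^1(H^+_\nu)}$. On $H^-_\nu$ I would define $\tilde u$ as the solution $w$ of the boundary value problem $\A w=0$ in $H^-_\nu$, $\partial_\nu^j w|_H=\partial_\nu^j u|_H$ for $j=0,\dots,k-1$, assembled through a Poisson-type kernel built from the characteristic roots of the symbol $\A(\xi',\tau)$ viewed as a polynomial in the normal variable $\tau$. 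Boundary ellipticity in direction $\nu$ is precisely the Lopatinski\u{\i}--Shapiro-type condition keeping these roots in a fixed open half-plane, ensuring convergence of the representation and exponential decay of $w$ away from $H$. The glued function $\tilde u$ is then of class $\hold^{k-1}$ across $H$, so that $D^{k-1}\tilde u$ carries no singular mass on $H$. After establishing the auxiliary algebraic fact that \ref{itm:main_a} forces $\A$ to be cancelling, the full-space inequality \eqref{eq:VS} of \cite{VS} applied to $\tilde u$ gives
\[
\|D^{k-1}u\|_{\lebe^{n/(n-1)}(H^+_\nu)}\leq\|D^{k-1}\tilde u\|_{\lebe^{n/(n-1)}(\R^n)}\leq c\|\A\tilde u\|_{\lebe^1(\R^n)}\leq c'\|\A u\|_{\lebe^1(H^+_\nu)},
\]
which is \ref{itm:main_b}.

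The main obstacle is to establish the sharp $\lebe^1(\R^n)$-bound on $\A\tilde u$. Naive pointwise bounds on the Poisson-type extension are too crude in the $\lebe^1$ regime, and one is forced to rewrite the bound on $\A\tilde u$ as a trace estimate on $H$: the traces $\partial_\nu^j u|_H$ must be controlled in suitably sharp Besov-type spaces that can be recovered from $\|\A u\|_{\lebe^1(H^+_\nu)}$ alone. This is where the Uspenski\u{\i}-type trace theorems announced in the abstract enter as the technical heart of the argument.
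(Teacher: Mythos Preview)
Your overall architecture is the same as the paper's: reduce to Van Schaftingen's full-space inequality \eqref{eq:VS} by extending across $H_\nu$, after checking that boundary ellipticity forces cancellation. Your necessity argument is a legitimate alternative to the paper's: the paper invokes \cite[Cor.~5.2]{VS} for ellipticity and then, for boundary ellipticity, adapts Lemma~\ref{lem:nec_bdry_ell} with the holomorphic test functions $f_\varepsilon^{(k-1)}(z)=(z+2\varepsilon)^{-2(n-1)/n}$ rather than your exponential-plus-rescaling construction. (One caveat: your parenthetical that the commutators ``live away from $H$'' is not correct---$\nabla\chi$ is supported on an annulus that meets $H_\nu$---but the scaling you announce still works, since on that annulus the exponential contributes an $\lebe^1$-mass of order $\lambda^{-1}$.)

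The genuine gap is in your extension for \ref{itm:main_a}$\Rightarrow$\ref{itm:main_b}. You propose to take $w$ solving $\A w=0$ in $H^-_\nu$ with $\partial_\nu^j w|_H=\partial_\nu^j u|_H$ for $j=0,\dots,k-1$. For the overdetermined operators at stake here this boundary value problem is typically \emph{unsolvable}: already for $\A=D$ (the full gradient), $\A w=0$ forces $w$ constant, and for $\A=\varepsilon$ (the symmetric gradient) it forces $w$ to be an infinitesimal rigid motion, so neither can match generic boundary data. The Lopatinski\u{\i}--Shapiro heuristic you invoke is tailored to determined systems; boundary ellipticity in the sense of Theorem~\ref{thm:main} does not supply enough decaying modes of $\A(\xi',\tau)$ to fill a full Dirichlet map when $\dim W>\dim V$. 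Consequently the chain $\|\A\tilde u\|_{\lebe^1(\R^n)}=\|\A u\|_{\lebe^1(H^+_\nu)}$ that your construction would give for free is simply unavailable.

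The paper circumvents this by \emph{not} asking the extension to be $\A$-free. Instead it first proves the trace estimate $\|\tr_k u\|_{\mathrm T_k(H_\nu)}\leq c\|\A u\|_{\lebe^1(H^+_\nu)}$ (Theorem~\ref{thm:trace_unified}, via the Smith-type representation of Theorem~\ref{thm:smith} and the kernel bounds of Proposition~\ref{prop:trace}), and then uses the classical $\dot{\sobo}{^{k,1}}$-extension of Theorem~\ref{thm:extension_many_derivatives} to produce $U$ on $H^-_\nu$ with matching $(k-1)$-jet and $\|D^k U\|_{\lebe^1(H^-_\nu)}\leq c\|\tr_k u\|_{\mathrm T_k(H_\nu)}$. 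The pointwise bound $|\A U|\leq c|D^k U|$ then closes the loop. So you are right that the Uspenski\u{\i}-type trace theorems are the technical heart, but their role is not to salvage an $\lebe^1$-bound on an $\A$-harmonic extension; it is to feed a Sobolev (not $\A$-free) extension whose $\A$-energy is controlled only indirectly through the full $k$th gradient.
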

	To prove Theorem~\ref{thm:main}, we establish sharp trace theorems \emph{on given halfspaces} which enable us to extend to full space. Recall that the trace space of ${\dot\sobo}{^{k,1}}(H^+_\nu)$ is $\lebe^1(H_\nu)$ \cite{Gag} for $k=1$ and the Besov space ${\dot\besov}{^{k-1}_{1,1}}(H_\nu)$ for $k\geq 2$ \cite{Uspenskii}. It is therefore natural to split our analysis into first and higher order operators. For $k=1$, we have:
	\begin{theorem}\label{thm:trace_k=1}
		Let \(n \ge 2\) and $\A$ be a differential operator as in \eqref{eq:A} with $k=1$. Then the following are equivalent:
		\begin{enumerate}
			\item\label{itm:k=1_a} The operator $\A$ is boundary elliptic  in direction $\nu\in\mathbb S^{n-1}$.
			\item\label{itm:k=1_b} There exists a constant $c>0$ such that the trace estimate
			\begin{align}\label{eq:tracestimateThmMain1}
				\|u\|_{\lebe^1(H_\nu)}\leq c\|\A u\|_{\lebe^1(H_\nu^+)}
			\end{align}
			holds for all $u\in\hold^\infty_c(\R^n,V)$.
		\end{enumerate}
	\end{theorem}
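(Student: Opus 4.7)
For the necessity direction \ref{itm:k=1_b}$\Rightarrow$\ref{itm:k=1_a}, the plan is to argue by contraposition. Suppose boundary ellipticity fails: there exist $\xi_0\in\R^n$ and $v_0\in V_\C\setminus\{0\}$ with $\A(\xi_0+i\nu)v_0=0$. I will test the estimate on
\[
u_t(x)=\chi(x)\,\Re\bigl[v_0\,e^{it(\xi_0+i\nu)\cdot x}\bigr]=\chi(x)\,e^{-t\,\nu\cdot x}\,\Re\bigl[v_0\,e^{it\xi_0\cdot x}\bigr],\qquad t>0,
\]
for a cutoff $\chi\in\hold^\infty_c(\R^n)$ equal to one on a large neighbourhood of $H_\nu\cap B_R$ and whose gradient is supported in $\{|\nu\cdot x|\geq 1\}$. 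Homogeneity of $\A$ annihilates $v_0\,e^{it(\xi_0+i\nu)\cdot x}$, so $\A u_t$ is supported in $\supp\nabla\chi$ and carries the factor $e^{-t\,\nu\cdot x}$ that decays exponentially on $H^+_\nu$; hence $\|\A u_t\|_{\lebe^1(H^+_\nu)}\to 0$ as $t\to\infty$. On the boundary one has $u_t|_{H_\nu}=\chi\,\Re[v_0\,e^{it\xi_0\cdot x}]$, whose $\lebe^1(H_\nu)$-norm stays bounded below by a positive constant uniformly in $t$ (trivially when $\xi_0\perp\nu$, otherwise by an oscillatory averaging), contradicting \eqref{eq:tracestimateThmMain1}.

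The substance lies in the sufficiency \ref{itm:k=1_a}$\Rightarrow$\ref{itm:k=1_b}. After a rotation I may take $\nu=e_n$ and decompose $\A=A_n\partial_n+\A'(D')$ with $A_n=\A(e_n)$ and $\A'(D')=\sum_{j<n}A_j\partial_j$. Injectivity of $A_n$ is immediate from boundary ellipticity by setting $\xi=0$ in the condition. Applying the Fourier transform in the tangential directions, the identity $\A u=g$ becomes the first-order ODE in $x_n$
\[
A_n\partial_n\hat u(\xi',x_n)+i\,\A(\xi',0)\hat u(\xi',x_n)=\hat g(\xi',x_n),\qquad x_n\in\R,
\]
for each $\xi'\in\R^{n-1}$. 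Compact support of $u$ in the normal direction singles out the unique solution, which at $x_n=0$ is of the form $\hat u(\xi',0)=\int_0^\infty M(s,\xi')\hat g(\xi',s)\,ds$ for a matrix-valued multiplier $M(s,\xi')$ built from the matrix exponential of (a reduction of) $iA_n^{+}\A(\xi',0)$, using a Moore--Penrose left inverse $A_n^{+}$. Inverting the tangential Fourier transform yields
\[
u(x',0)=\int_0^\infty \bigl(K(s,\,\cdot\,)\ast_{x'}\A u(\,\cdot\,,s)\bigr)(x')\,ds
\]
for a family of matrix-valued distributions $K(s,\cdot)$ on $\R^{n-1}$, and Young's inequality will deliver \eqref{eq:tracestimateThmMain1} as soon as the uniform bound $\sup_{s>0}\|K(s,\cdot)\|_{\mathcal M(\R^{n-1})}<\infty$ is established.

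The hard part is this Wiener-type estimate. A homogeneity/scaling argument based on $B(t\xi')=tB(\xi')$ for the boundary symbol $B(\xi')=iA_n^{+}\A(\xi',0)$ collapses the family $\{K(s,\cdot)\}_{s>0}$ to a single distribution $\widetilde K=\mathcal F^{-1}[e^{B(\cdot)}A_n^{+}]$, whose total variation controls the whole family. Boundary ellipticity forces the eigenvalues of $B(\xi')$ to lie in the closed left half-plane, so $e^{B(\xi')}$ is at least a bounded matrix symbol; in the strictly interior-elliptic case the eigenvalues are bounded away from the imaginary axis, $e^{B(\xi')}$ decays exponentially in $|\xi'|$, and $\widetilde K$ is a Schwartz-class kernel. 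The obstacle is that without interior ellipticity purely oscillatory eigenmodes of $B$ may arise, giving $\widetilde K$ a genuinely distributional (Dirac-supported) component along the characteristic varieties of $\A$---already visible in the scalar example $\A=\partial_1+c\partial_2$ with $\nu=e_1$, where $\widetilde K$ is a single Dirac mass along the characteristic line. Handling these contributions will require a Jordan/resolvent decomposition of $B(\xi')$ to express $\widetilde K$ as a finite sum of oscillatory integrals with total variation controlled by boundary ellipticity, together with a separate treatment of the overdetermined constraint $\pi_{(\operatorname{im} A_n)^\perp}\A u=\pi_{(\operatorname{im} A_n)^\perp}\A'(D')u$ that is invisible to the ODE reduction above when $\A$ is not square.
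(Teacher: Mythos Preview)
Your necessity argument via the exponential test functions $u_t$ is different from the paper's (which instead cuts off maps with a $1/z$-type singularity, see Lemma~\ref{lem:nec_bdry_ell}) but works for $k=1$ after minor care with the lower bound for the boundary term. The gap is in the sufficiency direction. You reduce \eqref{eq:tracestimateThmMain1} via Young's inequality to the Wiener-type bound $\widetilde K=\mathcal F^{-1}[e^{B(\cdot)}A_n^{+}]\in\mathcal M(\R^{n-1})$, but this bound can fail for boundary elliptic operators. Take $n=2$, $V=W=\R^2$, $\nu=e_2$, $A_2=I$, $A_1=\bigl(\begin{smallmatrix}0&1\\0&0\end{smallmatrix}\bigr)$: then $\A(\xi_1,\xi_2+i)=\bigl(\begin{smallmatrix}\xi_2+i&\xi_1\\0&\xi_2+i\end{smallmatrix}\bigr)$ is invertible for all real $\xi_1,\xi_2$, so $\A$ is boundary elliptic in direction $e_2$, yet $B(\xi_1)=i\xi_1\bigl(\begin{smallmatrix}0&1\\0&0\end{smallmatrix}\bigr)$ has a nontrivial nilpotent Jordan block, $e^{B(\xi_1)}=\bigl(\begin{smallmatrix}1&i\xi_1\\0&1\end{smallmatrix}\bigr)$ is \emph{unbounded}, and the $(1,2)$-entry of $\widetilde K$ is a derivative of $\delta_0$, hence not a finite Radon measure. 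Thus your assertion that boundary ellipticity makes $e^{B(\xi')}$ ``at least a bounded matrix symbol'' is false in the absence of interior ellipticity, and the proposed Jordan/resolvent decomposition cannot repair it: the inverse Fourier transform of a polynomially growing symbol is a distribution of positive order. The trace estimate nevertheless holds for this $\A$---what carries it are compatibility constraints on $g=\A u$ coming from $u\in\hold^\infty_c$, which your Young-inequality step discards and for which you give no mechanism; the acknowledged overdetermined case is likewise left unresolved.

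The paper avoids the ODE propagator altogether. Boundary ellipticity is used, via a Nullstellensatz/resultant argument on the symbol, to produce a kernel $K\in\hold^\infty(\R^n\setminus\{0\})$ that is $(1-n)$-homogeneous, vanishes on $H_\nu^-$, and represents $u(x)=\int_{H_\nu^+}K(y)\A u(x+y)\dif y$ (Theorem~\ref{thm:smith}). Because this $K$ is genuinely smooth on the sphere and vanishes on the equator $\mathbb S^{n-1}\cap H_\nu$, an elementary H\"older-continuity argument gives $K(\cdot,1)\in\lebe^1(\R^{n-1})$ (Proposition~\ref{prop:trace}\ref{itm:s=1}), and \eqref{eq:tracestimateThmMain1} follows from a single application of Fubini.
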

	Even for operators with simple structure, estimating {differences} to arrive at \eqref{eq:tracestimateThmMain1} a la Gagliardo \cite{Gag} turns out to be hard as beyond the usual gradients this has only been achieved,  to the best of our knowledge, for symmetric gradients \cite{StrangTemam,Babadjian}. As for all other examples in which trace estimates are known \cite{BDG,DieGme21,GRVS}, the methods make instrumental use the fact that the differential operators concerned are boundary elliptic in every direction. Therefore, there is no hope to make these approaches work in the sharp case of Theorem \ref{thm:trace_k=1}.

	Our proof of the trace inequality in Theorem~\ref{thm:trace_k=1} uses an improved version of Smith's integral representation formula from \cite{Smith}, see Theorem~\ref{thm:smith} below. Crucially using the homogeneity and regularity properties of the underlying integral kernels, this brings us in a position to employ a similar argument as in Gagliardo's original proof of the trace inequality for $\sobo^{1,1}$-maps  \cite{Gag}, see Proposition \ref{prop:trace}\ref{itm:s=1}. This $\lebe^1$-estimate, however, is insufficient to prove the optimal higher order trace inequalities, which require Besov space estimates; see Proposition \ref{prop:trace}\ref{itm:s>1}. In this regard, we will prove the following:
	\begin{theorem}\label{thm:trace_k>1}
		Let \(n \ge 2\) and $\A$ be a differential operator as in \eqref{eq:A}  of order $k\geq 2$. Then the  following are equivalent:
		\begin{enumerate}
			\item\label{itm:k>1_a} The operator $\A$ is boundary elliptic  in direction $\nu\in\mathbb S^{n-1}$.
			\item\label{itm:k>1_b} There exists a constant $c>0$ such that the estimate
			\begin{align*}
				\|\partial^{k-1}_\nu u\|_{\lebe^1(H_\nu)}+\sum_{j=0}^{k-2}\|\partial_\nu^j u\|_{\dot{\besov}{^{k-1-j}_{1,1}}(H_\nu)}\leq c\|\A u\|_{\lebe^1(H_\nu^+)}
			\end{align*}
			holds for all $u\in\hold^\infty_c(\R^n,V)$.
		\end{enumerate}
	\end{theorem}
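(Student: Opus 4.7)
The plan for \ref{itm:k>1_a}$\Rightarrow$\ref{itm:k>1_b} is to invoke the improved Smith representation (Theorem~\ref{thm:smith}): boundary ellipticity in direction $\nu$ produces, for each $j\in\{0,\dots,k-1\}$ and each tangential multi-index $\alpha$ with $|\alpha|+j=k-1$, an identity
\begin{equation*}
\partial^{\alpha}\partial_{\nu}^{j}u(x)=\int_{H_{\nu}^{+}}K_{j,\alpha}(x-y)\,\A u(y)\,\dd y,\qquad x\in H_{\nu}^{+},
\end{equation*}
with a kernel $K_{j,\alpha}$ smooth away from the origin and positively homogeneous of degree $-(n-1)$. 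Restricting these identities to $H_{\nu}$ reduces Theorem~\ref{thm:trace_k>1} to proving the convolution trace estimates
\begin{equation*}
\|(K*f)|_{H_{\nu}}\|_{\dot{\besov}^{s}_{1,1}(H_{\nu})}\lesssim\|f\|_{\lebe^{1}(H_{\nu}^{+})},\qquad s\in\{1,\dots,k-1\},
\end{equation*}
for every kernel $K$ of the above type, with the target norm replaced by $\lebe^{1}(H_{\nu})$ when $s=0$ (which accounts for the top normal derivative $\partial_{\nu}^{k-1}u$). The endpoint $s=0$ is Proposition~\ref{prop:trace}\ref{itm:s=1}, and the Besov range is Proposition~\ref{prop:trace}\ref{itm:s>1}.

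The hard step is the Besov trace estimate above. I would use the Marchaud-type characterisation
\begin{equation*}
\|g\|_{\dot{\besov}^{s}_{1,1}(H_{\nu})}\sim\int_{H_{\nu}}\frac{\|\Delta_{h}^{m}g\|_{\lebe^{1}(H_{\nu})}}{|h|^{(n-1)+s}}\,\dd h
\end{equation*}
for any integer $m>s$, commute the tangential difference with the convolution to get
\begin{equation*}
\Delta_{h}^{m}\bigl((K*f)|_{H_{\nu}}\bigr)(x')=\int_{H_{\nu}^{+}}\Delta_{h}^{m}K(x'-y)\,f(y)\,\dd y,
\end{equation*}
and exploit that the smoothness and homogeneity of $K$ yield pointwise bounds for $\Delta_{h}^{m}K$ with Taylor decay $|h|^{m}|z|^{-(n-1)-m}$ in the regime $|h|\ll|z|$ and a trivial bound of order $|z|^{-(n-1)}$ otherwise. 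After applying Fubini and splitting into these two regimes, the estimate reduces to two homogeneous integrals in $|h|/|z|$ that converge precisely because $0<s<m$. A dyadic decomposition of $H_{\nu}^{+}$ in the distance to the boundary then absorbs the $y$-integration and yields the claimed bound with a constant depending only on $s$, $n$ and $K$.

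For the converse \ref{itm:k>1_b}$\Rightarrow$\ref{itm:k>1_a}, I would test with plane waves. If $\A$ is not boundary elliptic in direction $\nu$, fix $\xi_{0}\in\R^{n}$ and $v\in V_{\C}\setminus\{0\}$ with $\A(\xi_{0}+\imag\nu)v=0$, so that $\phi_{\lambda}(x)\coloneqq\e^{\imag\lambda(\xi_{0}+\imag\nu)\cdot x}v$ satisfies $\A\phi_{\lambda}\equiv0$. Multiplying by a fixed cutoff $\chi\in\hold_{c}^{\infty}(\R^{n})$ equal to $1$ in a neighbourhood of the origin, Leibniz leaves only terms carrying at least one derivative on $\chi$, and a direct computation using the decay $\e^{-\lambda\nu\cdot x}$ on $H_{\nu}^{+}$ gives $\|\A(\chi\phi_{\lambda})\|_{\lebe^{1}(H_{\nu}^{+})}=O(\lambda^{k-2})$ while $\|\partial_{\nu}^{k-1}(\chi\phi_{\lambda})\|_{\lebe^{1}(H_{\nu})}\sim\lambda^{k-1}$. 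Taking the real or imaginary part and letting $\lambda\to\infty$ contradicts \ref{itm:k>1_b}. The main obstacle in the whole argument is the fine Besov estimate of the central step; once it is in place, Smith's representation together with a routine scaling argument closes the proof.
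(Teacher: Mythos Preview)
Your forward direction \ref{itm:k>1_a}$\Rightarrow$\ref{itm:k>1_b} contains a genuine gap. You differentiate the Smith kernel to full order $k-1$, obtaining kernels $K_{j,\alpha}$ homogeneous of degree $-(n-1)$, and then claim the trace estimate $\|(K*f)|_{H_\nu}\|_{\dot{\besov}{^{s}_{1,1}}(H_\nu)}\lesssim\|f\|_{\lebe^1(H_\nu^+)}$ for $s\geq 1$. This estimate is \emph{false} for such kernels: rescaling $x'\mapsto tx'$, $h\mapsto th$ gives
\[
\int_{\R^{n-1}}\int_{\R^{n-1}}\frac{|\Delta_h^m K(x',t)|}{|h|^{n-1+s}}\dif x'\dif h \;=\; t^{-s}\,\|K(\,\cdot\,,1)\|_{\dot{\besov}{^{s}_{1,1}}(\R^{n-1})}
\]
whenever $K$ is $-(n-1)$-homogeneous, and the factor $t^{-s}$ cannot be integrated against $\lebe^1$ data on $H_\nu^+$. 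Your citation of Proposition~\ref{prop:trace}\ref{itm:s>1} is therefore mismatched: that result requires the kernel to be $(s-n)$-homogeneous in order to land in $\dot{\besov}{^{s-1}_{1,1}}$, and this coupling of homogeneity degree to Besov exponent is exactly what makes the scaling balance. The fix, as carried out in the paper, is \emph{not} to take the tangential derivatives $\partial^\alpha$ at all: one represents $\partial_\nu^j u$ directly via $K_j=\partial_\nu^j K$, which is $(k-j-n)$-homogeneous, and applies Proposition~\ref{prop:trace} with parameter $s=k-j$. Your sketch of the Besov bound also omits the crucial vanishing of $K$ on $H_\nu^-$; it is this (not smoothness and homogeneity alone) that furnishes the extra decay needed to place $K(\,\cdot\,,1)$ in $\dot{\besov}{^{s-1}_{1,1}}$.

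Your converse via exponentially decaying plane waves differs from the paper's approach (Lemma~\ref{lem:nec_bdry_ell} uses holomorphic maps with $f_\varepsilon^{(k-1)}(z)=(z+2\varepsilon)^{-1}$ and sends $\varepsilon\to 0$), but your scaling $\|\A(\chi\phi_\lambda)\|_{\lebe^1(H_\nu^+)}=O(\lambda^{k-2})$ against $\|\partial_\nu^{k-1}(\chi\phi_\lambda)\|_{\lebe^1(H_\nu)}\sim\lambda^{k-1}$ is correct and yields a valid alternative proof of \ref{itm:k>1_b}$\Rightarrow$\ref{itm:k>1_a}.
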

	The proof of this result hinges on refined Besov estimates on the integral kernels derived in Section~\ref{sec:prel}. This not only yields the sharp trace theorem for boundary elliptic operators but also displays a new method for the usual $k$th order gradient case. Moreover, from a conceptual  perspective of limiting $\lebe^{1}$-estimates involving differential operators, the proof of Theorem \ref{thm:trace_k>1} seems to be the first approach that systematically uses difference estimates despite the lack of the $\lebe^{1}$-control of full $k$th order gradients due to  Ornstein's non-inequality.
	
	Note that in fact Theorems \ref{thm:trace_k=1} and \ref{thm:trace_k>1} lead to a unified treatment of first and higher order operators by considering the space
	$$
	\mathrm{T}_k(H_\nu ,V) \coloneqq \left\{(f_0,f_1,\ldots,f_{k-1})\colon \begin{array}{c} f_{k-1}\in\lebe^1(H_\nu ,V),\\ f_j\in\dot{\besov}{^{k-1-j}_{1,1}}(H_\nu ,V),\,0\leq j\leq k-2 \end{array}\right\},
	$$
	endowed with the obvious norm, cf. Theorem \ref{thm:trace_unified}. We will show in Section \ref{sec:BVA} that for $k$th order operators $\A$, the boundary ellipticity condition in direction $\nu$ is equivalent to the fact that $\tr_k(\bv^\A(H_\nu^+))=\mathrm T_k(H_\nu,V)$, where the trace operator is defined as
	$$
	\tr_k u\coloneqq(u,\partial_\nu u,\ldots,\partial_\nu^{k-1}u)\big|_{H_\nu}
	$$
	for functions smooth up to the boundary and then extended by continuity for a suitable sort of strict convergence; see \eqref{eq:funcspacesDef}ff. for the underlying terminology of such generalized $\bv$-type spaces. We show that this trace map admits a continuous right inverse that cannot be linear, generalizing Peetre's Theorem, cf. \cite{peetre,PW1,PW2}. These facts seem to have gone unnoticed also in the basic case of $\bv^k(\R^n_+)$ and $\dot{\sobo}{^{k,1}}(\R^n_+)$.
	
	To prove Theorem~\ref{thm:main}, we  use the unified extension theorem in  $\dot\sobo{^{k,1}}(H_\nu^-)$ (see Theorem \ref{thm:extension_many_derivatives}) to reduce the question to an estimate in full space, in this case \eqref{eq:VS}. 
	To see if this estimate is available, we should combine the canceling condition and boundary ellipticity. Interestingly, it turns out that the boundary ellipticity of  \ref{itm:main_a} implies the canceling condition \eqref{eq:canc}, see Proposition~\ref{prop:bc_ec}.
	
	Using the same extension procedure, we can prove versions up to the boundary of other estimates that are known in full space \cite{VS,BVS,R_tams}, see Theorem \ref{thm:estimates}. This particularly allows us to bound all fractional derivatives of $D^{k-1}u$ (e.g. in the Sobolev-Slobodecki\u{\i} scale) \emph{up to, but not including}, order one on halfspaces against $\A u$. In light of Ornstein's negative result, this  displays the optimal generalisation of Aronszajn's result to the case $p=1$. 
	
	Finally, let us point out that all of the preceding theorems admit interpretations in the potential theory  for elliptic systems on halfspaces with $\lebe^{1}$-data; Theorem~\ref{thm:main} then corresponds to the case of elliptic systems with identically null boundary conditions. Whereas the focus of the present paper is on the generalisation of Aronszajn's result to the case $p=1$, it may also be seen as a first step towards a comprehensive theory of $\lebe^{1}$-estimates for general boundary value problems. We will pursue this in later work.
	
	The paper is organized as follows: in Section~\ref{sec:prel} we give a comprehensive trace and extension theory in $\dot{\sobo}{^{k,1}}(\R^n_+)$ and  establish the improvement of the representation formula from Smith's work that will be instrumental for the main results. In Section \ref{sec:convolution} we prove the main trace estimates on convolution operators. In Section~\ref{sec:trace} we prove both trace theorems, and in Section \ref{sec:sob} we establish the Sobolev estimate and its extensions. In Section \ref{sec:BVA} we extend our estimates to spaces of rough functions and comment on the notion of boundary ellipticity.
	{\small 
		\subsection*{Acknowledgement}
		We thank Petru Mironescu for suggesting helpful references. F.G. also gratefully acknowledges financial support through the Hector foundation FP, Project 626/21. }

	\section{Traces for $\sobo^{k,1}(\R_{+}^{n})$-maps and representation formulas}\label{sec:prel}
	In this section, we revisit the trace theory for functions in the Sobolev space $\sobo^{k,1}$ on halfspaces and give an improved variant of Smith's representation formula to play a crucial role in the subsequent sections.
	
	In view of our main results, we will assume that we work in space dimensions $n>1$ throughout. The reason for this is that, for $n=1$, the only relevant operator is $\A(t)=t^k$, in which case we have the embedding $\dot\sobo{^{k,1}}(\R_+)\hookrightarrow \hold_0^{k-1}(\R_+)$, where the latter space is endowed with the norm $u\mapsto\|u^{(k-1)}\|_{\infty}$. In this case, Theorem \ref{thm:main} holds with the convention $1/0=\infty$ and all derivatives up to order $k-1$ have well defined point values at $0$.
	\subsection{Traces for the Sobolev space ${\sobo}{^{k,1}}(\R^n_+)$}
	The results below necessitate some background facts from Besov space theory, and we refer the reader to Triebel's encyclopedic monograph  \cite[\S 5]{Triebel} for the definition and elementary properties of homogeneous Besov spaces. Most importantly for us, 
	we require a characterisation of homogeneous Besov spaces in terms of finite differences \cite[\S 5.2.3, Thm. 1]{Triebel} that we record explicitely: Given $k\in\mathbb{N}$ and a map $u\colon\R^{m}\to V$, we put for $h\in\R^{m}$
	\begin{align*}
		\Delta_{h}^{k}u(x):=\sum_{i=0}^{k}(-1)^{k-i}\binom{k}{i}u(x+ih),\qquad x\in\R^{m}. 
	\end{align*}
	Moreover, given  $s>0$ and $k\in\mathbb{N}$ with $k>s$, we define the seminorms
	\begin{align}\label{eq:seminormBesov}
		\|u\|_{{\dot\besov}{_{1,1}^{s}}(\R^{m})}\coloneqq\int_{\R^{m}}\int_{\R^{m}}|\Delta_{h}^{k}u(x)|\dif x\frac{\dif h}{|h|^{m+s}}
	\end{align}
	for $u\in\lebe_{\locc}^{1}(\R^{m},V)$, and it is clear that \eqref{eq:seminormBesov} defines a norm on $\hold_{c}^{\infty}(\R^{m},V)$. We note that for any two such choices of $k>s$ the corresponding seminorms on the right-hand side of~\eqref{eq:seminormBesov} are equivalent and, in particular, define equivalent norms on ${\dot{\besov}}{_{1,1}^{s}}(\R^{m},V)$. Upon  tacitly identifying boundaries $H_{\nu}$ of half-spaces $H_{\nu}^{\pm}$ with $\R^{n-1}$, all of the preceding notions carry over to functions defined on $H_{\nu}$.

	Based on these definitions and identifying $\R^{n-1}\simeq\R^{n-1}\times\{0\}$, the classical results of Gagliardo \cite{Gag} and Uspenski\u{\i} \cite{Uspenskii} (also see \cite{MR,Mironescu_2015}) can be stated as follows:
	\begin{theorem}\label{thm:gag+usp}
		Let $k\geq 2$. Then we have the trace inequalities
		$$
		\|u\|_{\lebe^1(\R^{n-1})}\leq c\|Du\|_{\lebe^1(\R^n_+)}\quad\text{and}\quad\| u\|_{\dot{\besov}{^{k-1}_{1,1}}(\R^{n-1})}\leq c\|D^ku\|_{\lebe^1(\R^n_+)}
		$$
		for all $u\in\hold^\infty_c(\R^n)$.
	\end{theorem}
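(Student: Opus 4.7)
The plan is to prove the two estimates separately: the first is a direct consequence of the fundamental theorem of calculus, while the second rests on combining a Taylor expansion of $u$ in the normal direction with the difference characterization \eqref{eq:seminormBesov} of the homogeneous Besov seminorm.

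For the first inequality, since $u\in\hold_c^\infty(\R^n)$ has compact support, the fundamental theorem of calculus in the normal variable yields $u(x',0)=-\int_0^\infty \partial_n u(x',t)\,\dif t$ for every $x'\in\R^{n-1}$. Taking absolute values, integrating in $x'$, and applying Fubini give $\|u\|_{\lebe^1(\R^{n-1})}\leq\|\partial_n u\|_{\lebe^1(\R^n_+)}\leq\|Du\|_{\lebe^1(\R^n_+)}$, with constant $c=1$.

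For the second inequality, I would work with the norm on $\dot{\besov}^{k-1}_{1,1}(\R^{n-1})$ afforded by \eqref{eq:seminormBesov} with finite-difference order $k$, so the task reduces to bounding $\int_{\R^{n-1}}\|\Delta_h^k u(\cdot,0)\|_{\lebe^1(\R^{n-1})}|h|^{-(n+k-2)}\,\dif h$ by $\|D^k u\|_{\lebe^1(\R^n_+)}$. The key representation is the Taylor identity with integral remainder in the normal direction, valid for every $r>0$:
$$u(x',0)=\sum_{j=0}^{k-1}\frac{(-r)^j}{j!}\partial_n^j u(x',r)+\frac{(-1)^k}{(k-1)!}\int_0^r s^{k-1}\partial_n^k u(x',s)\,\dif s.$$
To avoid pointwise traces of derivatives of $u$ on the slice $\{x_n=r\}$, I would average this identity over $r\in[|h|,2|h|]$. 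Applying $\Delta_h^k$ in the tangential variable and using the decomposition estimate
$$\|\Delta_h^k\partial_n^j u(\cdot,r)\|_{\lebe^1(\R^{n-1})}\leq C|h|^{k-j}\|D^k u(\cdot,r)\|_{\lebe^1(\R^{n-1})}\quad(0\leq j\leq k),$$
obtained by writing $\Delta_h^k=\Delta_h^{k-j}\Delta_h^j$, estimating the $k-j$ outer differences by the mean-value bound $\|\Delta_h^{k-j}g\|_{\lebe^1}\leq|h|^{k-j}\|\partial_\omega^{k-j}g\|_{\lebe^1}$ with $\omega=h/|h|$, and the $j$ inner differences by $\|\Delta_h^j f\|_{\lebe^1}\leq 2^j\|f\|_{\lebe^1}$, one arrives after swapping the inner integrals at
$$\|\Delta_h^k u(\cdot,0)\|_{\lebe^1(\R^{n-1})}\leq C|h|^{k-1}\int_{|h|}^{2|h|}\|D^k u(\cdot,r)\|_{\lebe^1(\R^{n-1})}\,\dif r+C\int_0^{2|h|}s^{k-1}\|D^k u(\cdot,s)\|_{\lebe^1(\R^{n-1})}\,\dif s.$$

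It remains to multiply by $|h|^{-(n+k-2)}$, pass to polar coordinates in $\R^{n-1}$, and apply Fubini. The first piece contributes $C\int_0^\infty\|D^k u(\cdot,r)\|_{\lebe^1}\int_{r/2}^r\rho^{-1}\,\dif\rho\,\dif r=(C\log 2)\|D^k u\|_{\lebe^1(\R^n_+)}$, while the second piece contributes $C\int_0^\infty s^{k-1}\|D^k u(\cdot,s)\|_{\lebe^1}\int_{s/2}^\infty\rho^{-k}\,\dif\rho\,\dif s$, which is comparable to $\|D^k u\|_{\lebe^1(\R^n_+)}$ precisely because $k\geq 2$ renders $\int\rho^{-k}\dif\rho$ convergent at infinity. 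The main delicacy lies here: a first-order rather than $k$-th order Taylor expansion would produce a logarithmic divergence at the large-scale end of the $\rho$-integral, and expanding to order $k$ is exactly what balances the polynomial weight $s^{k-1}$ against the Besov weight $|h|^{-(n+k-2)}$.
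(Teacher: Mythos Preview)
Your argument is correct. The Gagliardo inequality via the fundamental theorem of calculus is standard, and your Taylor-remainder approach to the Besov estimate is sound: the decomposition $\Delta_h^k=\Delta_h^{k-j}\Delta_h^j$ together with the mean-value bound on the outer block and the triangle inequality on the inner block yields exactly the slice estimate you state, and the Fubini computations in polar coordinates are right. (Two minor remarks: since $u\in\hold_c^\infty$, the averaging over $r\in[|h|,2|h|]$ is not actually needed to make sense of slice traces---setting $r=|h|$ already works---and in fact any Taylor order $m\geq 2$, not only $m=k$, would cure the large-$\rho$ divergence; your choice $m=k$ is simply the cleanest.)

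The paper itself does not prove Theorem~\ref{thm:gag+usp}; it cites it as the classical Gagliardo and Uspenski\u{\i} results. However, the Remark closing Section~\ref{sec:convolution} sketches an alternative route that the paper's machinery affords: represent $u$ via the cone-supported Sobolev kernel of Proposition~\ref{prop:sobolev}, then invoke the convolution-operator trace estimate of Proposition~\ref{prop:trace}, which reduces matters to checking that the restricted kernel $K_k(\cdot,1)$ lies in $\lebe^1$ (for the Gagliardo part) and in $\dot{\besov}{^{k-1}_{1,1}}$ (for the Uspenski\u{\i} part). Your approach is more self-contained and requires no kernel analysis, while the paper's route is what generalises seamlessly to arbitrary boundary-elliptic operators $\A$, since there the Smith representation replaces Taylor expansion and no explicit normal-direction structure of $\A$ is available.
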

	As a consequence, one can immediately prove the estimate of Theorem \ref{thm:trace_k>1} in the case of Sobolev spaces, $\A=D^k$:
	$$
	\|\tr_k u\|_{\mathrm T_k(\R^{n-1})}\leq c\|D^k u\|_{\lebe^1(\R^n_+)} \quad\text{ for all }u\in\hold^\infty_c(\R^n), 
	$$
	where $$
	\tr_k u\coloneqq(u,\partial_n u,\ldots,\partial_n^{k-1}u)\big|_{\R^{n-1}}.
	$$
	The trace operators that can be defined by the estimates of Theorem \ref{thm:gag+usp} admit continuous right inverses, but these are insufficient for our purposes. We will prove the following extension theorem, which is probably known to the experts but seems absent from the literature:
	\begin{theorem}\label{thm:extension_many_derivatives}
		There exists a constant \(c>0\) such that for all \(g_0, \dotsc, g_{k - 1} \in \hold^\infty_c (\R^{n - 1})\), there exists  \(u \in \hold^\infty (\R^n_+)\) such that for \(j = 0, \dotsc, k - 1\)
		\begin{equation*}
			\partial_n^j u (\,\cdot\,, 0) = g_j
		\end{equation*}
		and 
		\begin{equation*}
			\lVert D^k u \rVert_{\lebe^1 (\R^n)}
			\leq c\left( \lVert g_0 \rVert_{\dot{\besov}{^{k-1}_{1,1}}(\R^{n - 1})}
			+ \dotsb 
			+ \lVert g_{k - 2} \rVert_{\dot{\besov}{^{1}_{1,1}}(\R^{n - 1})} 
			+ \lVert g_{k - 1} \rVert_{\lebe^1 (\R^{n - 1})}\right). 
		\end{equation*}
	\end{theorem}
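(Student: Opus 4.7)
The plan is to build $u$ by superposing separate extensions $u_0, \dotsc, u_{k-1}$ adapted to each $g_j$ with all other traces vanishing. For $j = 0, \dotsc, k - 2$ the datum $g_j$ lies in a Besov space and a linear mollification-based extension works; for $j = k - 1$ the datum $g_{k-1} \in \lebe^1$ requires a nonlinear Gagliardo-type extension, since by Peetre's theorem no bounded linear right inverse of the trace map $\sobo^{1,1}(\R^n_+) \to \lebe^1(\R^{n-1})$ exists.

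\emph{Besov levels.} Fix a Schwartz mollifier $\rho$ on $\R^{n-1}$ with $\int \rho = 1$ and $\int y^\alpha \rho (y)\,\dif y = 0$ for $1 \le |\alpha| \le k-1$, and a smooth cutoff $\chi \in \hold^\infty_c([0,\infty))$ equal to $1$ in a neighbourhood of $0$. Writing $\rho_t(y) := t^{-(n-1)}\rho(y/t)$, set
\begin{equation*}
u_j(x', x_n) := \chi(x_n) \, \frac{x_n^j}{j!} \, (g_j * \rho_{x_n})(x'), \qquad j = 0, \dotsc, k - 2.
\end{equation*}
The vanishing moment conditions on $\rho$ combined with $\chi \equiv 1$ near $0$ give $\partial_n^i u_j(\cdot, 0) = \delta_{ij} g_j$ for $0 \leq i \leq k-1$. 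To bound $\|D^k u_j\|_{\lebe^1}$, I expand with Leibniz: each $\partial_{x_n}$ applied to $\rho_{x_n}$ contributes a factor $x_n^{-1}$ and one extra vanishing moment, so $D^k u_j$ is a sum of terms $x_n^{j-k}(g_j * \sigma_{x_n})$ with $\sigma$ carrying enough vanishing moments. The Besov characterization
\begin{equation*}
\|g\|_{\dot{\besov}{^{s}_{1,1}}(\R^{n-1})} \sim \int_0^\infty t^{-s-1} \, \|g * \sigma_t\|_{\lebe^1(\R^{n-1})} \, \dif t
\end{equation*}
then yields $\|D^k u_j\|_{\lebe^1(\R^n_+)} \lesssim \|g_j\|_{\dot{\besov}{^{k-1-j}_{1,1}}}$.

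\emph{Top level.} For $g_{k-1}$, I invoke Gagliardo's nonlinear extension to obtain a smooth and compactly supported $v$ on $\overline{\R^n_+}$ with $v(\cdot, 0) = g_{k-1}$ and $\|Dv\|_{\lebe^1(\R^n_+)} \lesssim \|g_{k-1}\|_{\lebe^1}$, and define
\begin{equation*}
u_{k-1}(x', x_n) := \int_0^{x_n} \frac{(x_n - s)^{k - 2}}{(k-2)!} \, \bigl( v(\cdot, s) * \rho_{x_n - s} \bigr) (x') \, \dif s,
\end{equation*}
with the interior convolution in $x'$ at scale $x_n - s$ inserted to tame mixed derivatives. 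A direct computation gives $\partial_n^j u_{k-1}(\cdot, 0) = 0$ for $j < k-1$ (the iterated integral vanishes at the boundary) and $\partial_n^{k-1} u_{k-1}(\cdot, 0) = g_{k-1}$ (using $\int \rho = 1$). Setting $u := \sum_{j=0}^{k-1} u_j$ then yields the required traces.

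The main obstacle is the bound $\|D^k u_{k-1}\|_{\lebe^1} \lesssim \|g_{k-1}\|_{\lebe^1}$ with only first-order $\lebe^1$-control on $v$. Purely normal $k$-derivatives reduce to $\partial_n v$ and are controlled directly by Gagliardo. A mixed derivative $\partial_{x'}^\alpha \partial_n^{k - |\alpha|} u_{k-1}$ with $|\alpha| \geq 1$ is handled by placing one $x'$-derivative on $v$ via Gagliardo's bound and the remaining $|\alpha| - 1$ derivatives on the scaled mollifier $\rho_{x_n - s}$, each such transfer costing a factor $(x_n - s)^{-1}$; these factors are absorbed by the polynomial weight $(x_n - s)^{k-2}$ whenever $|\alpha| \leq k - 1$, while the borderline case $|\alpha| = k$ additionally requires an integration by parts in $s$ transferring one derivative from the kernel onto $\partial_s v$. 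Verifying these estimates carefully together with the smoothness of $u$ on $\R^n_+$ concludes the proof.
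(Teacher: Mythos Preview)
Your Besov-level construction is sound and essentially what the paper does: it cites the Uspenski\u{\i}/Mironescu--Russ extension $v$ and sets $u_j(x',x_n)=x_n^j v(x',x_n)/j!$. Your version with moment conditions on $\rho$ so that $\partial_n^i u_j(\cdot,0)=\delta_{ij}g_j$ is a pleasant variant that eliminates the need for the Lions--Magenes dilation superposition which the paper uses to decouple the levels.

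The top level, however, has a genuine gap. Take $k=2$ for concreteness. After the change of variable $t=x_n-s$ your formula reads $u_1(x',x_n)=\int_0^{x_n}(v(\cdot,x_n-t)*\rho_t)(x')\,\dif t$, whence
\[
\partial_n^2 u_1 = g_{k-1}*\partial_{x_n}\rho_{x_n} \;+\; (\partial_n v)(\cdot,0)*\rho_{x_n} \;+\; \int_0^{x_n}(\partial_n^2 v)(\cdot,x_n-t)*\rho_t\,\dif t.
\]
The first term alone satisfies $\int_0^\infty\|g_{k-1}*\partial_t\rho_t\|_{\lebe^1(\R^{n-1})}\dif t \simeq \|g_{k-1}\|_{\dot\besov{^0_{1,1}}(\R^{n-1})}$, and $\dot\besov{^0_{1,1}}\subsetneq\lebe^1$, so this is \emph{not} controlled by $\|g_{k-1}\|_{\lebe^1}$. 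The third term involves $\partial_n^2 v$, which Gagliardo's estimate does not control either, and there is no evident cancellation between the three terms. Your sketch ``purely normal $k$-derivatives reduce to $\partial_n v$'' is therefore incorrect, and the proposed integration by parts in $s$ for the tangential borderline case does not repair the normal direction. The same obstruction persists for general $k$.

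The paper's remedy (following Mironescu and Pe\l czy\'nski--Wojciechowski) is far simpler and exploits the hypothesis $g_{k-1}\in\hold_c^\infty$ rather than merely $g_{k-1}\in\lebe^1$: set
\[
u_{k-1}(x',x_n)=\theta(x_n/\varepsilon)\,\frac{x_n^{k-1}}{(k-1)!}\,g_{k-1}(x')
\]
with $\theta\in\hold_c^\infty(\R)$, $\theta(0)=1$, $\theta'(0)=\dotsb=\theta^{(k-1)}(0)=0$. A Leibniz expansion gives $\|D^k u_{k-1}\|_{\lebe^1}\leq c\bigl(\|g_{k-1}\|_{\lebe^1}+\sum_{m=1}^k\varepsilon^m\|D^m g_{k-1}\|_{\lebe^1}\bigr)$, and choosing $\varepsilon=\varepsilon(g_{k-1})$ small enough yields the bound. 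This is where the nonlinearity enters, not through Gagliardo's extension.
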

	The proof of this result is done in two steps: First we construct extension operators that satisfy each Dirichlet condition separately. Then we use a superposition formula to put these extensions together. 
	\begin{proposition}
		\label{proposition_extension_besov}
		For every \(j \in \{0, \dotsc, k - 2\}\), if \(g_j \in \dot{\besov}{^{k - j}_{1,1}}(\R^{n - 1})\), there exists \(u \in \hold^\infty (\R^n_+)\) such that \(\partial_n^j u (\,\cdot\,, 0) = g_j\) and 
		\begin{equation*}
			\lVert D^k u \rVert_{\lebe^1 (\R^n)}
			\leq c\lVert g_j \rVert_{\dot{\besov}{^{k-j - 1}_{1,1}}(\R^{n - 1})}. 
		\end{equation*}
	\end{proposition}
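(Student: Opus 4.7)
The plan is to construct the extension explicitly through a Littlewood--Paley decomposition of $g_{j}$ whose dyadic scale is matched by a normal cutoff. Writing $g_{j}=\sum_{m\in\mathbb{Z}}\Delta_{m}g_{j}$ with the standard dyadic projectors $\Delta_{m}$ on $\R^{n-1}$, I use the characterisation
\begin{align*}
\|g_{j}\|_{\dot{\besov}{^{k-j-1}_{1,1}}(\R^{n-1})}\sim\sum_{m\in\mathbb{Z}}2^{m(k-j-1)}\|\Delta_{m}g_{j}\|_{\lebe^{1}(\R^{n-1})}.
\end{align*}
Fix $\eta\in\hold_{c}^{\infty}([0,1))$ with $\eta(0)=1$ and $\eta^{(\ell)}(0)=0$ for every $\ell\ge 1$, and set
\begin{align*}
u(x',x_{n})\coloneqq\sum_{m\in\mathbb{Z}}u_{m}(x',x_{n}),\qquad u_{m}(x',x_{n})\coloneqq\frac{x_{n}^{j}}{j!}\,\eta(2^{m}x_{n})(\Delta_{m}g_{j})(x').
\end{align*}
Expanding $\partial_{n}^{i}u_{m}(\,\cdot\,,0)$ by Leibniz and invoking the flatness of $\eta$ at the origin shows that this vanishes for every $i\ne j$ and equals $\Delta_{m}g_{j}$ when $i=j$; summing in $m$ gives the prescribed boundary datum $\partial_{n}^{j}u(\,\cdot\,,0)=g_{j}$, together with the bonus vanishing of all other normal derivatives at the boundary (useful for the superposition argument in Theorem~\ref{thm:extension_many_derivatives}).

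The $\lebe^{1}$-estimate then reduces to elementary bookkeeping. For $\alpha+|\beta|=k$, only $\partial_{x'}^{\beta}$ touches $\Delta_{m}g_{j}$, so Leibniz in the normal variable yields
\begin{align*}
\partial_{n}^{\alpha}\partial_{x'}^{\beta}u_{m}=\sum_{i=0}^{\min(\alpha,j)}\binom{\alpha}{i}\frac{x_{n}^{j-i}}{(j-i)!}\,2^{m(\alpha-i)}\,\eta^{(\alpha-i)}(2^{m}x_{n})\,\partial_{x'}^{\beta}\Delta_{m}g_{j}.
\end{align*}
Since $\eta(2^{m}\cdot)$ is supported in $[0,2^{-m}]$, the change of variable $y=2^{m}x_{n}$ gives $\int_{0}^{\infty}x_{n}^{j-i}\,|\eta^{(\alpha-i)}(2^{m}x_{n})|\,\dif x_{n}\lesssim 2^{-m(j-i+1)}$, while Bernstein's inequality provides $\|\partial_{x'}^{\beta}\Delta_{m}g_{j}\|_{\lebe^{1}(\R^{n-1})}\lesssim 2^{m|\beta|}\|\Delta_{m}g_{j}\|_{\lebe^{1}(\R^{n-1})}$. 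Multiplying these three contributions produces the exponent $m(-j+i-1+\alpha-i+|\beta|)=m(k-j-1)$, \emph{independent of the summation index} $i$, so that
\begin{align*}
\|\partial_{n}^{\alpha}\partial_{x'}^{\beta}u_{m}\|_{\lebe^{1}(\R^{n}_{+})}\lesssim 2^{m(k-j-1)}\|\Delta_{m}g_{j}\|_{\lebe^{1}(\R^{n-1})}.
\end{align*}
Summing over $m$ and over the finitely many $(\alpha,\beta)$ with $\alpha+|\beta|=k$ recovers the desired inequality $\|D^{k}u\|_{\lebe^{1}(\R^{n})}\lesssim\|g_{j}\|_{\dot{\besov}{^{k-j-1}_{1,1}}(\R^{n-1})}$.

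The decisive ingredient is the calibration of the normal cutoff $\eta(2^{m}x_{n})$ to the tangential frequency scale $2^{m}$ of $\Delta_{m}g_{j}$: without this rescaling, the polynomial weight $x_{n}^{j}$ cannot absorb the Bernstein gain $2^{m|\beta|}$ as soon as $|\beta|>j$, which is unavoidable when $k-j\ge 2$. The remaining question is convergence of the series, which is a minor point: for $g_{j}\in\hold_{c}^{\infty}(\R^{n-1})$ the norms $\|\Delta_{m}g_{j}\|_{\lebe^{1}}$ decay faster than any polynomial in $2^{|m|}$, so $u\in\hold^{\infty}(\R^{n}_{+})$ follows at once, and the general case $g_{j}\in\dot{\besov}{^{k-j-1}_{1,1}}(\R^{n-1})$ is recovered by density once the estimate has been established.
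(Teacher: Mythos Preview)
Your argument is correct. The Littlewood--Paley extension you build is the standard device underlying the Uspenski\u{\i}-type theorems, and the exponent bookkeeping is done accurately: the three factors $2^{m(\alpha-i)}$, $2^{-m(j-i+1)}$ and $2^{m|\beta|}$ combine to $2^{m(k-j-1)}$ for every admissible $(\alpha,\beta,i)$, which is exactly what is needed to recover the $\dot{\besov}{^{k-j-1}_{1,1}}$-norm after summation in $m$. The convergence discussion at the end is adequate for $g_{j}\in\hold_{c}^{\infty}$, since $\|\Delta_{m}g_{j}\|_{\lebe^{\infty}}\lesssim 2^{m(n-1)}$ as $m\to-\infty$ and decays super-polynomially as $m\to+\infty$, which makes all the series converge locally uniformly together with their derivatives.

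Your route differs from the paper's. The paper does not construct the extension from scratch: it invokes the weighted trace theorems of Uspenski\u{\i} and Mironescu--Russ to obtain $v\in\hold^{\infty}(\R^{n}_{+})$ with $v(\,\cdot\,,0)=g_{j}$ and weighted $\lebe^{1}$-bounds on $D^{k-\ell}v$, and then sets $u(x',x_{n})=x_{n}^{j}v(x',x_{n})/j!$; the $\lebe^{1}$-estimate on $D^{k}u$ follows by Leibniz. Your construction is more self-contained, essentially reproving the cited extension result in the special case at hand. It also buys you the extra property $\partial_{n}^{i}u(\,\cdot\,,0)=0$ for all $i\ne j$, which the paper's $u=x_{n}^{j}v/j!$ does not enjoy (the higher normal derivatives $\partial_{n}^{i-j}v|_{x_{n}=0}$ need not vanish); this is precisely why the paper subsequently needs the dilation superposition with coefficients $\mu_{i,j}$ to assemble the full extension in Theorem~\ref{thm:extension_many_derivatives}, whereas with your building blocks one could simply take $u=\sum_{j}u_{j}$.
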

	\begin{proof}
		By \cite{Uspenskii} and \cite[Thm.~1.2]{MR}, there exists \(v \in \hold^\infty (\R^n_+)\) such that \(v (\,\cdot\,, 0) = g\) and if \(\ell \leq j \),
		\begin{equation*}
			\int_{\R^n_+} x_n^{j - k} \vert D^{k - \ell} v (x', x_n)\vert \dif x
			\leq c \lVert g_j \rVert_{\dot{\besov}{^{k-j - 1}_{1,1}}(\R^{n - 1})}. 
		\end{equation*}
		Defining \(u (x',x_n) = x_n^j v (x', x_n)/j!\), we reach the conclusion. 
	\end{proof}
	
	\begin{proposition}[{\cite[Lem. 3.3]{PW1}}]
		\label{proposition_extension_lebe}
		If \(g \in \lebe^1 (\R^{n - 1})\), there exists \(u \in \hold^\infty (\R^n_+)\) such that \(\partial_n^{k - 1} u (\,\cdot\,, 0) = g\) and 
		\begin{equation*}
			\lVert D^k u \rVert_{\lebe^1 (\R^n)}
			\leq c\lVert g \rVert_{\lebe^1 (\R^{n - 1})}. 
		\end{equation*}
	\end{proposition}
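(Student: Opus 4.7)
The claim is not amenable to a straightforward lifting from the $k=1$ case. Multiplying a Gagliardo-type $\dot\sobo{^{1,1}}$-extension $v$ of $g$ by $x_n^{k-1}/(k-1)!$ would produce tangential derivatives involving $\partial_{x'}^\beta v$ for $|\beta|\geq 2$, which are not controlled when $g$ is only in $\lebe^1(\R^{n-1})$. More fundamentally, by Peetre's theorem (cf.\ \cite{peetre}), no bounded \emph{linear} extension from $\lebe^{1}(\R^{n-1})$ into $\dot\sobo{^{k,1}}(\R^{n}_{+})$ realises the $(k{-}1)$th Dirichlet trace; already the canonical linear ansatz $u(x', x_n) = \chi(x_n)\frac{x_n^{k-1}}{(k-1)!}(g \ast \rho_{x_n})(x')$ produces tangential $k$th derivatives with $\lebe^{1}$-norm decaying only as $x_n^{-1}$, and the cancellation $\int \partial^{\alpha'}\rho = 0$ upgrades this only under the strictly stronger regularity $g \in {\dot\besov}{^{0}_{1,1}} \subsetneq \lebe^1$.

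The proof therefore proceeds via a nonlinear atomic scheme. First, perform a Calder\'on--Zygmund-type decomposition $g = \sum_{i} a_{i}$, where each atom $a_{i}$ is supported in a cube $Q_{i} \subset \R^{n-1}$ of side $\ell_{i}$ with the localised bound $\|a_{i}\|_{\lebe^{\infty}} \leq c\,\ell_{i}^{-(n-1)}\|a_{i}\|_{\lebe^{1}}$, and such that $\sum_{i}\|a_{i}\|_{\lebe^{1}} \leq c\|g\|_{\lebe^{1}}$. To each atom one associates a smooth local extension
\begin{equation*}
	u_{i}(x', x_n) \coloneqq (a_{i} \ast \rho_{\ell_{i}})(x')\,\chi(x_n/\ell_{i})\,\frac{x_n^{k-1}}{(k-1)!},
\end{equation*}
with $\rho$ a fixed tangential mollifier and $\chi \in \hold^{\infty}_{c}([0, 1))$, $\chi(0) = 1$. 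A direct scaling computation, exploiting $\|a_{i}\ast\rho_{\ell_i}\|_{\lebe^{\infty}} \leq c\,\ell_{i}^{-(n-1)}\|a_{i}\|_{\lebe^{1}}$ together with the factor $\ell_i^{-1}$ per tangential derivative and the slab volume $\sim \ell_i^n$, yields the pointwise bound $|D^k u_i| \leq c\,\ell_i^{-n}\|a_i\|_{\lebe^1}\mathbf{1}_{\supp(u_i)}$, hence $\|D^{k}u_{i}\|_{\lebe^{1}(\R^{n}_{+})} \leq c\|a_{i}\|_{\lebe^{1}}$. Summing and using the bounded overlap of the tents $2Q_i \times [0,\ell_i]$ produces $u = \sum_{i}u_{i} \in \hold^{\infty}(\R^{n}_{+})$ satisfying $\|D^{k}u\|_{\lebe^{1}} \leq c\|g\|_{\lebe^{1}}$.

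The main obstacle is matching the boundary trace \emph{exactly}: by construction $\partial_{n}^{k-1}u(\cdot, 0) = \sum_i a_{i}\ast\rho_{\ell_{i}}$, which differs from $g = \sum_i a_i$ by the mollification error $e \coloneqq \sum_i(a_i - a_i\ast\rho_{\ell_i})$. Choosing the mollifier at a smaller scale $\delta\ell_i$ with $\delta \ll 1$ ensures that $\|e\|_{\lebe^1} \leq \theta\|g\|_{\lebe^1}$ for some $\theta<1$ (at the cost of a larger constant in the derivative estimate, which is still finite), so that a Picard-type iteration — solving the same extension problem for $e$ at each step and summing a geometric series — corrects the trace while preserving the $\lebe^{1}$-bound on $D^k u$. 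The explicit construction and iteration is carried out in \cite[Lem.~3.3]{PW1}, which the proof invokes.
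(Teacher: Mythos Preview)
Your atomic--iterative scheme is correct and is indeed the construction of the cited reference \cite[Lem.~3.3]{PW1}, but the paper takes a considerably more elementary route, following Mironescu \cite{Mironescu_2015}. One simply sets
\[
u(x',x_n)=\theta(x_n/\varepsilon)\,x_n^{k-1}\,g(x')
\]
for a fixed cut-off $\theta\in\hold^\infty_c(\R)$ with $\theta(0)=1$ and a parameter $\varepsilon>0$. A direct Leibniz computation gives (up to a harmless typo in the paper)
\[
\|D^k u\|_{\lebe^1(\R^n_+)}\leq c\Big(\|g\|_{\lebe^1}+\sum_{j=1}^k \varepsilon^{j}\|D^j g\|_{\lebe^1}\Big),
\]
and one then chooses $\varepsilon$ small enough, \emph{depending on $g$}, so that the higher-order sum falls below $\|g\|_{\lebe^1}$. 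The nonlinearity you rightly flag as unavoidable thus enters solely through this choice of $\varepsilon$, not through any decomposition of $g$. The trade-off is that the paper's argument tacitly needs $g$ smooth (so that $\|D^j g\|_{\lebe^1}<\infty$), which is all that is required in the only application, Theorem~\ref{thm:extension_many_derivatives}, where $g_{k-1}\in\hold^\infty_c(\R^{n-1})$. Your approach is more robust --- it treats $g\in\lebe^1$ directly and produces a genuinely smooth extension --- but at the cost of a Calder\'on--Zygmund decomposition, bounded-overlap arguments, and a Picard correction, where for the paper's purposes a one-line formula does the job.
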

	\begin{proof}
		The proof given by Mironescu \cite{Mironescu_2015} for \(p = 1\) has a straightforward adaptation. 
		Taking a function \(\theta \in \hold^\infty_c(\R)\) such that \(\theta (0) = 1\) and \(\theta '(0) = \dotsb = \theta^{(k - 1)} (0) = 0\), we define for $\varepsilon>0$ to be chosen later
		\begin{equation*}
			u (x', x_n) = \theta (x_n/\varepsilon) x_n^{k- 1} g (x').
		\end{equation*}
		We have 
		\begin{equation*}
			\int_{\R^n_+} \vert D^k u \vert \dif x
			\leq c \sum_{j = 0}^k \int_{\R^{n - 1}} \varepsilon^k \vert D^k g \vert\dif x';
		\end{equation*}
		taking \(\varepsilon > 0\) small enough, we reach the conclusion.
	\end{proof}
	
	\begin{proof}[Proof of Proposition~\ref{thm:extension_many_derivatives}]
		By Propositions \ref{proposition_extension_besov} and \ref{proposition_extension_lebe}, let \(u_j\) be given so that 
		\begin{equation*}
			\partial_n^j u_j (\,\cdot\,, 0) = g_j.
		\end{equation*}
		We apply now a linear superposition of dilations (see \cite[Thm. 2.2]{Lions_Magenes} and \cite[Thm. 4.26]{Adams}).
		Defining now 
		\begin{equation*}
			u (x', x_n) = \sum_{j = 0}^{k -1} \sum_{i = 1}^k \mu_{i, j} u_j (x',\lambda_i x_n),
		\end{equation*}
		with fixed distinct $\lambda_{1},...,\lambda_{k}\in(0,\infty)$ 
		under the condition on $\mu_{i,j}$ that 
		\begin{equation*}
			\sum_{i = 1}^k \mu_{i, j} \lambda_i^\ell  = \delta_{j, \ell},
		\end{equation*}
		we reach the conclusion.
	\end{proof}
	Finally, we remark that the  trace operators in ${\sobo}{^{k,1}}$, as defined by Theorem \ref{thm:gag+usp},  have continuous inverses, but these can only be linear for $k>1$ \cite{peetre,MR}. A generalization of Peetre's result that Gagliardo's trace operator $\tr ({\sobo}{^{1,1}})=\lebe^1$ cannot have a bounded linear inverse to the $k$th order Sobolev space is proved in \cite[Thm. 5.1]{PW2}. Here we prove the following related result:
	\begin{proposition}
		The bounded linear trace operator 
		\begin{align*}
			\tr_k \colon {\dot{\sobo}}{^{k,1}}(\R^n_+)\to \tr_{k}({\dot{\sobo}}{^{k,1}}(\R^n_+))=\mathrm T_k(\R^{n-1})
		\end{align*}
		does not admit a right inverse that is both linear and continuous.
	\end{proposition}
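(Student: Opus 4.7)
The approach is to reduce the non-existence of a continuous linear right inverse for $\tr_k$ to Peetre's classical theorem \cite{peetre}, which asserts that the Gagliardo trace $\sobo^{1,1}(\R^n_+)\to\lebe^1(\R^{n-1})$ admits no such right inverse. The key observation is that $\lebe^1(\R^{n-1})$ embeds continuously into $\mathrm{T}_k(\R^{n-1})$ as the last trace component via $\iota_{k-1}\colon g\mapsto(0,\dots,0,g)$, and that applying $\partial_n^{k-1}$ to any $\dot{\sobo}{^{k,1}}$-extension of $\iota_{k-1}(g)$ transfers the Peetre obstruction to the first-order setting where it is known to be incompatible with linear continuity.

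Concretely, suppose for contradiction that a continuous linear right inverse $E\colon\mathrm{T}_k(\R^{n-1})\to\dot{\sobo}{^{k,1}}(\R^n_+)$ of $\tr_k$ exists. For $g\in\lebe^1(\R^{n-1})$, set $u_g\coloneqq E(\iota_{k-1}(g))$; by the very definition of $\tr_k$ one has $\partial_n^ju_g|_{\R^{n-1}}=0$ for $0\le j\le k-2$ and $\partial_n^{k-1}u_g|_{\R^{n-1}}=g$. Defining $\Psi(g)\coloneqq\partial_n^{k-1}u_g$, the continuity of $E$ together with $D(\partial_n^{k-1}u_g)$ being a subtensor of $D^ku_g$ gives
$$\|D\Psi(g)\|_{\lebe^1(\R^n_+)}\leq\|D^ku_g\|_{\lebe^1(\R^n_+)}\leq c\|g\|_{\lebe^1(\R^{n-1})},$$
so $\Psi\colon\lebe^1(\R^{n-1})\to\dot{\sobo}{^{1,1}}(\R^n_+)$ is a continuous linear right inverse of the homogeneous Gagliardo trace.

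The step that I expect to require the most care is the passage from values in the homogeneous space $\dot{\sobo}{^{1,1}}(\R^n_+)$ to values in the non-homogeneous space $\sobo^{1,1}(\R^n_+)$ in which Peetre's theorem is customarily phrased. This will be handled by a cutoff argument: the pointwise bound $|\Psi(g)(x',x_n)|\leq|g(x')|+\int_0^{x_n}|\partial_n\Psi(g)(x',t)|\,\dif t$ combined with Fubini shows that $\Psi(g)\in\lebe^1(\R^{n-1}\times(0,M))$ for every $M>0$, with norm dominated by $c_M\|g\|_{\lebe^1(\R^{n-1})}$. Fixing $\chi\in\hold_c^\infty([0,\infty))$ with $\chi(0)=1$, the composition $g\mapsto\chi(x_n)\Psi(g)$ is then a continuous linear right inverse $\lebe^1(\R^{n-1})\to\sobo^{1,1}(\R^n_+)$ of the classical Gagliardo trace, contradicting \cite{peetre} and concluding the proof.
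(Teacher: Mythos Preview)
Your proof is correct and follows essentially the same route as the paper's own argument: embed $\lebe^1(\R^{n-1})$ into $\mathrm{T}_k(\R^{n-1})$ via the last component, apply the hypothetical right inverse $E$, then take $\partial_n^{k-1}$ to obtain a continuous linear right inverse of the Gagliardo trace, contradicting Peetre's theorem. The paper's proof is a terse three lines that stop at the homogeneous $\dot{\sobo}{^{1,1}}$ level; your additional cutoff step to land in the inhomogeneous $\sobo^{1,1}(\R^n_+)$ where Peetre's result is usually stated is a reasonable extra bit of care that the paper omits.
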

	\begin{proof}
		Suppose that $E$ is a bounded linear inverse of $\tr_k$ and let $f\in\lebe^1(\R^{n-1})$, so $(0,\ldots,0,f)\in\mathrm T_k(\R^{n-1})$. Write $u\coloneqq E(0,\ldots,0,f)\in {\dot{\sobo}}{^{k,1}}(\R^n_+)$ so that $\partial_n^{k-1}u\in{\dot{\sobo}}{^{1,1}}(\R^n_+)$. Note that then $f\mapsto \partial_n^{k-1}u$ is a bounded linear inverse of Gagliardo's trace operator. This contradicts Peetre's theorem.
	\end{proof}
	
	\subsection{The Smith integral representation}

	In this section  we revisit and improve Smith's construction of representation formulas implied by the boundary ellipticity condition
	\cite{Smith0,Smith}. Precisely, we have  
	\begin{theorem}\label{thm:smith}
		Let $\A$ as in \eqref{eq:A} be  boundary elliptic in direction $\nu\in\mathbb{S}^{n-1}$. Then there exists a $(k-n)$-homogeneous convolution kernel $K\in\hold^\infty(\R^n\setminus\{0\},\lin(W,V))$ such that $K=0$ in $H^-_\nu$ and
		\begin{align}\label{eq:smith}
			u(x)=\int_{H^+_\nu}K(y)\A u(x+y)\dif y\quad\text{for } x\in \R^n,
		\end{align}
		for all $u\in\hold^\infty_c(\R^n,V)$.
	\end{theorem}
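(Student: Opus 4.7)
After an orthogonal change of coordinates I assume $\nu = e_n$, write $x = (x',x_n) \in \R^{n-1} \times \R$, and set $G(x) := K(-x)$. The representation~\eqref{eq:smith} is then equivalent to $u = G \ast \A u$ for all $u \in \hold^\infty_c(\R^n, V)$, with $G$ being $(k-n)$-homogeneous, smooth on $\R^n \setminus \{0\}$, and supported in $\overline{H^-_\nu} = \{x_n \leq 0\}$. In other words, $G$ must be a left fundamental solution of $\A$ concentrated in the lower halfspace, and my plan, following Smith's strategy in~\cite{Smith}, is to build $G$ by a partial Fourier transform in $x'$ combined with a contour integral in $\xi_n$, whose position is dictated by boundary ellipticity.

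The key symbol-level input is that, by ellipticity, boundary ellipticity in direction $e_n$, and $k$-homogeneity of $\A$, the map $\A(\xi',\zeta) \colon V_{\C} \to W_{\C}$ is injective for every $\xi' \in \R^{n-1}$ and every $\zeta \in \C$ with either $\Im \zeta > 0$, or $\zeta \in \R$ and $(\xi',\zeta) \neq 0$. Consequently, for each fixed $\xi' \in \R^{n-1}\setminus\{0\}$, any rational left inverse of $\A(\xi',\cdot)$ can be arranged to have all of its $\zeta$-poles strictly below the real axis. Using this, I would produce a meromorphic left inverse $M(\xi',\zeta) \in \Lin(W_{\C}, V_{\C})$ of $\A(\xi',\zeta)$, rational in $\zeta$, $(-k)$-homogeneous on $\R^n \setminus \{0\}$, and with $\zeta$-poles confined to $\{\Im\zeta < 0\}$. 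The choice $M := (\A^*\A)^{-1}\A^*$ with $\A^*$ the holomorphic formal adjoint works whenever the scalar polynomial $\det(\A^*\A)$ has no zeros in the closed upper half-plane; if it fails, one instead selects, locally in $\xi'$, a $(\dim V)$-minor of $\A(\xi',\zeta)$ whose determinant has no zeros in $\{\Im\zeta \geq 0\}$, inverts it by Cramer's rule, and patches the resulting local left inverses by a partition of unity on the unit sphere in $\R^{n-1}$.

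With $M$ in hand, I define
\begin{equation*}
    G(x', x_n) := \frac{1}{(2\pi)^n} \int_{\R^{n-1}} e^{i x'\cdot\xi'} \left( \int_{\R} e^{i x_n \zeta} M(\xi',\zeta) \dif \zeta \right) \dif \xi',
\end{equation*}
with a suitable regularization of the inner contour near $\xi' = 0$. For $x_n > 0$, I close the inner contour in the upper half-plane: since $M(\xi',\cdot)$ is holomorphic there and $|e^{i x_n \zeta}|$ decays as $\Im\zeta \to +\infty$, the inner integral vanishes, giving $\supp G \subset \{x_n \leq 0\}$, which is the required support property. For $x_n < 0$, closing in the lower half-plane reduces the inner integral via residues to a finite sum of decaying exponentials $e^{i x_n \zeta_j(\xi')}$ with $\Im\zeta_j < 0$, and Fourier inversion in $\xi'$ then yields a kernel smooth on $\R^n \setminus \{0\}$ and $(k-n)$-homogeneous by counting homogeneities. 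The identity $M(\xi)\A(\xi) = \id_V$ on the real locus together with Plancherel finally gives $u = G \ast \A u$ for $u \in \hold^\infty_c(\R^n, V)$, equivalent to~\eqref{eq:smith}.

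The main technical obstacle is the construction of $M$ in the overdetermined case $\dim W > \dim V$: the denominator $\det(\A^*\A)$ may acquire spurious zeros in the closed upper half-plane from the failure of injectivity of the formal adjoint $\A^*$, not of $\A$ itself. One must then execute the minor-patching argument above while simultaneously preserving homogeneity and smoothness, and carefully control the inner contour near $\xi' = 0$, where the poles of $M(\xi',\cdot)$ may approach the real axis. Once $M$ is secured with the stated properties, the remaining Fourier and residue computations are essentially standard, as are the resulting regularity and homogeneity of $G$.
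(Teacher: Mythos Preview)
Your contour-integral strategy is a genuine alternative to the paper's route, which instead combines a cone-supported Sobolev representation (Proposition~\ref{prop:sobolev}) with Hilbert's Nullstellensatz, a resultant construction producing an auxiliary elliptic operator $\mathbb{P}$ on $\R^{n-1}$, and an exterior-power reduction from $\dim V\ge 2$ to the scalar case. If a left inverse $M$ with all the stated properties existed, the remainder of your outline would essentially go through; the problem is that your proposed construction of $M$ in the overdetermined case rests on a false premise.

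The claim that, locally in $\xi'$, one can select a $(\dim V)$-minor of $\A(\xi',\zeta)$ whose determinant has no zeros in $\{\Im\zeta\ge 0\}$ already fails for $\dim V=1$. Take $n=2$, $V=\R$, $W=\R^2$, $k=2$, and
\[
\A(\xi_1,\xi_2)=\bigl(\xi_1^2+\xi_2^2,\ 4\xi_1^2+\xi_2^2\bigr)^{\!\top},
\]
which is elliptic and boundary elliptic in direction $e_2$. At $\xi'=1$ the two $1\times 1$ minors are $\zeta^2+1$ and $\zeta^2+4$, with roots $\pm i$ and $\pm 2i$; each has a root in the open upper half-plane, so neither minor qualifies, even at a single point. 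This is no accident: since the coefficients of $\A$ and the tangential frequency $\xi'$ are real, every minor determinant is a real-coefficient polynomial in $\zeta$, so any non-real root is paired with its conjugate---as soon as a minor determinant has a non-real root at all, it has one with $\Im\zeta>0$. Producing a rational left inverse $M(\xi',\zeta)$ that is holomorphic for $\Im\zeta\ge 0$, jointly $(-k)$-homogeneous, smooth on $\R^n\setminus\{0\}$, and decaying in $\zeta$ is precisely the substantive algebraic step; the paper achieves the equivalent of this through the Nullstellensatz/resultant machinery, not by choosing minors. Without a valid construction of $M$, the contour-closing, support, homogeneity, and smoothness-across-$\{x_n=0\}$ claims downstream cannot be executed.
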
 
	We give a direct proof of Theorem \ref{thm:smith}, following the approach in \cite[Sec. 3]{Smith}, where our presentation yields \(\hold^\infty\)- instead of \(\hold^l\)-smoothness for fixed arbitrarily large \(l \in \N\). We start with a variant of the Sobolev integral representations in the spirit of \cite[Sec.~2]{Smith}; here, we use the notation $V\otimes\bigotimes^k\R^n$ to denote the space of $V$-valued $k$-linear maps on $\R^n$.
	
	\begin{proposition}\label{prop:sobolev}
		There exists a $(k-n)$-homogeneous convolution kernel  $K_k\in\hold^\infty(\R^n\setminus\{0\},\lin (V\otimes\bigotimes^k\R^n,V))$  such that for every \(x = (x', x_n) \in \R^n\) with \(x_n \leq \lvert x \rvert/2\), \(K_k (x) = 0\) and 
		\begin{align}\label{eq:sobolev}
			u(x)=\int_{\R^n_+}K_k(y)D^k u(x+y)\dif y,\quad\text{for }x\in\R^n,
		\end{align}
		for all $u\in\hold^\infty_c(\R^n,V)$.
		
		Moreover, for every \(r \in \N\),
		there exists a \( (k + r - n) \)-homogeneous convolution kernel \(K_k^r \in (\R^{n} \setminus \{0\}, \Lin (V \otimes \bigotimes^k \R^n \otimes \bigotimes^r \R^{n - 1}, V))\) such that 
		\begin{equation*}
			\int_{\R^{n - 1}} K_k (y', y_n) v (y') \dif y'
			= \int_{\R^n_+} K_{k}^r(y, y_n)D^r v(y')\dif y.
		\end{equation*}
		for every \(v \in\hold^\infty_c (\R^{n - 1}, V \otimes \bigotimes^k \R^n)\) and \(y_n \in (0, +\infty)\).
	\end{proposition}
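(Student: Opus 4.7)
The plan is to construct $K_k$ by iterating a first-order Sobolev integral representation whose kernel is supported in a narrow cone around the positive $x_n$-axis, then to derive the boundary companion $K_k^r$ by running the same machinery tangentially.

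\textbf{Step 1 (Conical first-order representation).} Fix $c \in (\tfrac12,1)$ and choose $\phi \in \hold_c^\infty(\R^n)$ with $\int_{\R^n}\phi = 1$ and
$$
\spt \phi \subset \{\, y \in \R^n : y_n > c|y|,\ 1 < |y| < 2\,\}.
$$
With $\phi_t(y) := t^{-n}\phi(y/t)$, the mollifier endpoints $\phi_t\ast u\to u$ as $t\downarrow 0$ and $\phi_t\ast u\to 0$ as $t\uparrow\infty$ give, for $u\in\hold_c^\infty(\R^n,V)$,
$$
u(x) = -\int_0^\infty \frac{\dif}{\dif t}(\phi_t\ast u)(x)\,\dif t.
$$
The Euler-type identity $\tfrac{\dif}{\dif t}\phi_t = -t^{-1}\diver_y(y\,\phi_t(y))$ and integration by parts yield
$$
u(x) = \int_{\R^n} K_1(y)\,\nabla u(x+y)\,\dif y, \qquad K_1(y) := y\int_0^\infty t^{-n-1}\phi(y/t)\,\dif t.
$$
A dilation in $t$ shows $K_1$ is $(1-n)$-homogeneous and $\hold^\infty$ away from $0$, and $\spt K_1 \subset \{y_n \geq c|y|\}$ is inherited from $\spt\phi$.

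\textbf{Step 2 (Iteration to order $k$).} Applying Step~1 successively to $\nabla u,\ \nabla^2 u,\dotsc$ and collapsing the resulting $k$-fold convolution by translation invariance yields
$$
u(x) = \int_{\R^n} K_k(z)\,D^k u(x+z)\,\dif z, \qquad K_k := K_1^{\ast k},
$$
where the iterated convolution is interpreted as a multilinear form on $V\otimes\bigotimes^k\R^n$. The cone $C := \{y_n \geq c|y|\}$ is convex and hence closed under addition, so $\spt K_k \subset C \subset \{y_n > |y|/2\}$ because $c>1/2$. The convolution-homogeneity rule $\deg(f\ast g) = \deg f + \deg g + n$ gives $(k-n)$-homogeneity by induction. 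For $z\neq 0$, the integrand of each convolution lives in the bounded set $C\cap(z-C)$ and is $\lebe^1_{\locc}$ since $1-n > -n$, so the convolutions are absolutely convergent.

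\textbf{Step 3 (Boundary-integral identity).} For the second assertion, run Steps~1--2 within the tangential hyperplane $\R^{n-1}$ to produce an $(r-(n-1))$-homogeneous kernel $L_r \in \hold^\infty(\R^{n-1}\setminus\{0\})$, supported in a fixed cone of $\R^{n-1}$, with
$$
v(y') = \int_{\R^{n-1}} L_r(z')\,D^r v(y'+z')\,\dif z'.
$$
Substituting this identity into $\int_{\R^{n-1}} K_k(y',y_n)\,v(y')\,\dif y'$ and performing the tangential change of variables $w'=y'+z'$ yields the claimed identity with $K_k^r$ built as a tangential convolution of $K_k$ with $L_r$; the support property and $(k+r-n)$-homogeneity follow as in Step~2, and the tensor-valued target $\Lin(V\otimes\bigotimes^k\R^n\otimes\bigotimes^r\R^{n-1},V)$ is the natural one for pairing with $D^r v$.

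\textbf{Main obstacle.} The most delicate point is verifying $\hold^\infty$-regularity of $K_k$ (and $K_k^r$) on the punctured space. The iterated convolutions must absorb the overlapping singularities at $y=0$ and $y=z$ without producing new ones. This is handled by a dyadic cutoff decomposition of $K_1$ near $0$ combined with its conical support: the support structure confines the coincidence of singularities to $\{0,z\}$, both absorbed by the local $\lebe^1$-control from homogeneity of degree $1-n>-n$, while homogeneity allows extending smoothness from a single scale to $\R^n\setminus\{0\}$.
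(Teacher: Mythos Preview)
Your argument is correct, but it takes a genuinely different route from the paper's. The paper obtains $K_k$ in one stroke by averaging the one-dimensional $k$th-order representation
\[
u(x)=c\int_0^\infty t^{k-1}D^ku(x+t\theta)[\theta^{\otimes k}]\dif t
\]
over directions $\theta\in\mathbb{S}^{n-1}$ against a smooth angular weight $\varphi(\theta_n)$ supported in $\{\theta_n>1/2\}$; this yields the explicit kernel $K_k(y)=c\,|y|^{k-n}(y/|y|)^{\otimes k}\varphi(y_n/|y|)$, for which smoothness, homogeneity and the conical support are immediate. For the companion kernels $K_k^r$ the paper exploits a divergence identity $\diver(z^{\otimes m}\eta(|z|))=(n+m)z^{\otimes(m-1)}\eta(|z|)+z^{\otimes m}\eta'(|z|)$ on $\R^{n-1}$ and builds $K_k^r$ recursively by integration by parts. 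Your approach instead manufactures a first-order kernel $K_1$ from a mollifier-scaling identity and then iterates by convolution, and treats $K_k^r$ via a tangential Sobolev representation followed by a tangential convolution with $K_k$. Both approaches are valid; the paper's buys an explicit formula and makes the $\hold^\infty$-regularity trivial, whereas yours is modular but forces you to confront the regularity of iterated convolutions of cone-supported kernels with $L^1_{\locc}$ singularities. Your sketch for that last point (localise near the two singularities $0$ and $z$, use $L^1$-integrability of $K_1$ near each and smoothness of the other factor, then invoke homogeneity) is correct and can be made rigorous, so the ``main obstacle'' you flag is real but surmountable along the lines you indicate.
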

	
	\begin{proof}
		By integration by parts we have that for $\theta\in\mathbb{S}^{n-1}$ it holds that
		\begin{align*}
			u(x)=c\int_0^\infty t^{k-1}\dfrac{\dif^k}{\dif t^k}u(x+t\theta)\dif t=c\int_0^\infty t^{k-1}D^ku(x+t\theta)\left[\theta^{\otimes k}\right]\dif t.
		\end{align*}
		We fix \(\varphi \in \hold^\infty ([-1, 1])\) such that \(\varphi = 0\) on \([-1, 1/2]\)
		and \(\int_{\mathbb{S}^{n - 1}} \varphi (\theta_n) \dif \theta = 1\).
		We then have 
		\begin{align*}
			u(x)&=c\int_{\mathbb{S}^{n-1}}\int_0^\infty t^{k-1}D^ku(x+t\theta)\left[\theta^{\otimes k}\right]\varphi(\theta n)\dif t\dif \theta\\
			&=c\int_{\R^n_+} |y|^{k-1}D^ku(x+y)\left[\left(\frac{y}{|y|}\right)^{\otimes k}\right]\varphi\left(\frac{y_n}{|y|}\right)\dfrac{\dif y}{|y|^{n-1}},
		\end{align*}
		which suffices to conclude the proof of the first statement.
		
		The second statement stems from the fact that if \(H (z) = z^{\otimes m} \eta (\vert z\vert)\), \(z \in \R^{n - 1} \times \{0\} \simeq \R^{n - 1}\), then \(\operatorname{div} H = (n + m) z^{\otimes m - 1} \eta (\vert z\vert) + z^{\otimes m} \eta' (\lvert z \rvert)\). The kernels \(K_k^r\) can then be computed recursively through the latter identity and have the required properties.  
	\end{proof}

	\begin{proof}[Proof of Theorem~\ref{thm:smith}]
		There is no loss of generality in setting $\nu=e_n$, i.e., $H_\nu=\R^{n-1}$ and $H^+_\nu=\R^n_+$. We will use coordinates $x=(x^\prime,x_n)$ (real or complex), defined in an obvious way. 
		
		We begin by assuming that \(\dim V = 1\), so that the boundary ellipticity assumption then reduces to 
		the condition \(\A (\xi + i \nu) \ne 0\) for every \(\xi \in \R^{n}\).
		In particular, we have 
		\[
		\{ \xi = (\xi', \xi_n) \in \C^n : \A (\xi) = 0 \text{ and } \xi' = 0 \}
		= \{0\},
		\]
		therefore by Hilbert's strong Nullstellensatz (see for example \cite[Chpt.\ 4, \S 2, Thm. 6]{CLO}), for \(d \in \N\) large enough, there exist \(\mathbb{M}_1\) and \(\mathbb{M}_2\) homogeneous differential operators of orders \(d - k\) and \(d - \ell\), such that 
		\[
		\xi^{\otimes d} = \mathbb{M}_1 (\xi) \A (\xi) + \mathbb{M}_2(\xi) \xi'{}^{\otimes\ell}.
		\]
		By the Sobolev representation formula (Proposition~\ref{prop:sobolev}) and by integration by parts, it follows that 
		\begin{align}
			u(x) & = \int_{\R^n_+} K_d (y) D^d u (x + y) \dif y \notag \\
			&=  \int_{\R^n_+} K_d (y) \mathbb{M}_1 \mathbb{A} u (x + y) \dif y
			+ \int_{\R^n_+} K_d (y) \mathbb{M}_2 D'^{\ell} u (x + y) \dif y\label{eq_ia2ohW2wuu3aWienoiCheigh} \\
			& = \int_{\R^n_+}  (\mathbb{M}_1^* K_d^*)^* (y) \mathbb{A} u (x + y) \dif y
			+(-1)^{\ell}\int_{\R^n_+}  (\diver'^{\ell} \mathbb{M}_2^* K_d^*)^* (y) u (x + y) \dif y,\notag
		\end{align}
		where \(K_d^* (y) \coloneqq (K_d(y))^*\) is the pointwise adjoint.
		We observe that operator \( (\mathbb{M}_1^* K_d^*)^* \) is homogeneous of degree \(k - n\) and that \((\mathbb{M}_1^* K_d^*)^* (x) = 0\) if \(x_n \leq \vert x\vert/2\), so that the first term in the right-hand side of \eqref{eq_ia2ohW2wuu3aWienoiCheigh} has the required structure.
		
		Since the operator \(\A\) is boundary elliptic, we have \(w_0 = \A (0, 1) \ne 0\).
		By homogeneity of the operator \(\A\), it follows that for every \(\xi' \in \R^{n - 1}\), \(w_0 \cdot \A(\xi',\,\cdot\,)\) is a polynomial of degree \(k\), its leading order term being \(\vert w_0 \vert^2 \xi_n^k\).

		We define \(\mathbb{P}(\xi')\) as a differential operator from \(\R\) to the space \(\R[W]_k\) of homogeneous polynomials on \(W\) of degree \(k\) by \(\mathbb{P} (\xi')[w] = \operatorname{Res} (w_0 \cdot \A (\xi),  w \cdot \A (\xi), \xi_n)\), the resultant of the polynomials \(w_0 \cdot \A (\xi)\) and \(w \cdot \A (\xi)\), seen as polynomials in \(\xi_n\) over the ring of polynomials in \((\xi', w)\)  (see for example \cite[Chpt.\ 3,  \S\S 5--6]{CLO}).
		Given \(\xi' \in \R^{n - 1}\), we let
		\(\tau_1, \dotsc, \tau_k \in \C\) be the roots of the polynomial \(w_0 \cdot \A (\xi', \,\cdot\,)\) and we define the linear subspaces
		\begin{equation*}
			W_0 = \{w \in W : \deg (w \cdot \mathbb{A} (\xi', \,\cdot\,)) < k\}
		\end{equation*}
		and 
		\begin{equation*}
			W_j = \{w \in W : w \cdot \mathbb{A}(\xi', \tau_j) = 0\}, \quad  j \in \{1, \dotsc, k\}.
		\end{equation*}
		By boundary ellipticity, we have \(W_j \ne W\) for every \(j \in \{0, \dotsc, k\}\).
		By the properties of resultants, we have 
		\begin{equation*}
			\{ w \in W : \mathbb{P} (\xi')[w]=0\} \subseteq \bigcup_{j = 0}^k W_j \ne W,
		\end{equation*}
		and thus \(\mathbb{P} (\xi') \ne 0\). Since \(\mathbb{P}\) is a differential operator on scalar functions, this is equivalent to having \(\mathbb{P}\) elliptic.
		
		From the definition of the resultant \(\mathbb{P}\) as a Sylvester determinant, \(\mathbb{P}\) is homogeneous of degree \(k^2\) and there exist homogeneous differential operator \(\Q\) of order \(k (k - 1)\) from \(W\) into \(\R[W]_k\) such that
		\[
		\mathbb{P} (\xi') = \mathbb{Q} (\xi) \A (\xi).
		\]
		By ellipticity, the operator $\mathbb{P}$ on $\R^{n-1}$ defined by $\mathbb{P} (\xi^\prime)$ has a fundamental solution $E\in\hold^\infty(\R^{n - 1} \setminus\{0\}, \Lin(\R[W]_k, \R))$, i.e.,
		\begin{equation}
			\label{eq_ood4xaek6pas7eijieGu8soh}
			v(x^\prime)
			=
			\int_{\R^{n-1}}E(y^\prime)\mathbb{P} v(x^\prime-y^\prime)\dif y^\prime,\quad\text{for }x'\in\R^{n-1},\,v\in\hold^\infty_c(\R^{n-1}),
		\end{equation}
		which is such that $D^s E$ is $(k^2  -(n-1)-s)$-homogeneous for \(s \in \N\) provided that either \(n = 2\) (when one is inverting a differential operator on \(\R^{n - 1}\) by the fundamental theorem of calculus) or when $s \ge k^2 - (n - 1) + 1$ (see, for instance \cite[Chpt.~VII]{HormI}).
		
		We rewrite the second term of the right-hand side of \eqref{eq_ia2ohW2wuu3aWienoiCheigh}
		thanks to \eqref{eq_ood4xaek6pas7eijieGu8soh} as 	
		\begin{equation}
			\label{eq_fah7pho1OoxieBeiMei3quee}
			\begin{split}
				\int_{\R^n} & (\diver'^\ell \mathbb{M}_2^* K_d^*)^* (y-x)  u(y)\dif y\\
				&=\int_{\R^n}(\diver'^\ell \mathbb{M}_2^* K_d^*)^* (y-x) \biggl(\int_{\R^{n-1}} E(y^\prime-z^\prime)  \mathbb{P} u(z^\prime,y_n) \dif z^\prime\biggr) \dif\,( y^\prime, y_n)\\
				&= \int_{\R^n}\biggl(\int_{\R^{n-1}}(\diver'^\ell \mathbb{M}_2^* K_d^*)^* (y-x) E(y^\prime-z^\prime) \dif y^\prime\biggr) \, \mathbb{P} u(z^\prime,y_n) \dif\,( z^\prime, y_n)\\
				& =  \int_{\R^n} 
				F (x^\prime-z^\prime,y_n-x_n) \mathbb{P} u(z^\prime,y_n) \dif\,( z^\prime, y_n),
			\end{split}
		\end{equation}
		where 
		\begin{align*}
			F (y', y_n)&=\int_{\R^{n-1}}(\diver'^\ell \mathbb{M}_2^* K_d^*)^*(v^\prime-y^\prime,y_n) E(v^\prime) \dif y^\prime\\
			&=\int_{\R^{n-1}}(\diver'^\ell \mathbb{M}_2^* K_d^*)^* (w^\prime, y_n) E(w^\prime+y^\prime) \dif w^\prime.
		\end{align*}
		Since \((\mathbb{M}_2^* K_d^*)^* (w^\prime, y_n) = 0\) whenever \(\vert w^\prime\vert \ge 2 y_n\) and since \(\mathbb{B} E\) is locally integrable provided \(\ell \leq k^2 -1\).
		If we assume moreover, that we have chosen \(\ell = k^2 - 1\), 
		then \(D'^\ell E\) is homogeneous of degree \(2 - n\) and thus, by a suitable integration by parts, we get that \(F\) is homogeneous of degree \(k - n\).
		
		
		We take $\varphi\in\hold^\infty_c(\R^n)$ such that $\varphi=1$ in a neighbourhood of $0$ and $\varphi=0$ outside the ball of radius $|x^\prime|/2$. In view of \eqref{eq_fah7pho1OoxieBeiMei3quee}, we have that, for $(y^\prime,y_n)$ near $(x^\prime,0)$, for every \(r \in \N\), 
		\begin{equation}
			\label{eq_ohkaZae2aaxosie4ahph1eiV}
			\begin{split}
				F (y^\prime,y_n)=&\int_{\R^{n-1}}(\operatorname{div}'^\ell \mathbb{M}_2^* K_d^*)^*(w^\prime,y_n)\varphi (w^\prime+y^\prime)E(w^\prime+y^\prime)\dif w^\prime\\
				&+\int_{\R^{n-1}}  K_d^r (w^\prime,y_n) D'^r \mathbb{M}_2^* D'^\ell \left[(1-\varphi)E\right](w^\prime+y^\prime)\dif w^\prime.
			\end{split}		
		\end{equation} 
		The first integral defines a function that is of class $\hold^\infty$ in a neighbourhood of $(x^\prime,0)$; the second integral can be differentiated \(d + r - 1\) times without destroying the integrability, it thus follows \(F\) is of class \(\hold^{d + r - 1}\) in a neighbourhood of \((x', 0)\) with arbitrary \(r \in \N\).
		
		Taking \(K = (\mathbb{M}_1^* K_d^*)^*  + F\), we reach the conclusion when \(\dim V = 1\).
		
		\medbreak
		
		If \(\dim V = m \ge 2\), we consider the operator \(\medwedge^m \A\) of order \(mk\) from \(\medwedge^m V\) to \(\medwedge^m W\) defined for \(v_1, \dotsc, v_m \in V\) by
		\begin{equation*}
			\medwedge^m \A(\xi) (v_1 \wedge \dotsb \wedge v_m) 
			=
			(\A (\xi)v_1) \wedge \dotsb \wedge (\A (\xi) v_m).
		\end{equation*}
		There exists an operator \(\mathbb{S}\) of order \((m - 1)k\) from \(W\) to \(\Lin(\medwedge^{m - 1} V, \medwedge^{m} W)\) such that for every \(\omega \in \medwedge^{m - 1} V\),
		\begin{equation*}
			\medwedge^m \A (\omega \wedge v) = (\mathbb{S} \A v) (\omega). 
		\end{equation*}
		Moreover, letting \(v_1, \dotsc, v_m\) be a basis of \(V\) and choosing \(\omega_1, \dotsc, \omega_m \in \medwedge^{m - 1} V\) such that \(\omega_i \wedge v_j = \delta_{ij}\), so that for every \(v \in V\), one has 
		\begin{equation*}
			v = \sum_{j = 1}^m (\omega_i \wedge v) v_i
		\end{equation*}
		(upon identification between \(\medwedge^m V\) and \(\R\)).
		Letting \(\Hat{K}\) be the homogeneous kernel of order \(mk - n\) given for \(\medwedge^m \A\) in the first part of the proof, we have the identity
		\begin{equation*}
			\begin{split}
				u (x) &= \sum_{i = 1}^m  (\omega_i \wedge u (x)) v_i\\
				&= \sum_{i = 1}^m \int_{\R^n_+} K (y) \medwedge^m \A (\omega_i \wedge u) (x + y) \dif y v_i\\
				&= \sum_{i = 1}^m \int_{\R^n_+} K (y)  \mathbb{S} \A u (x + y) (\omega_i)\dif y v_i\\
				&= \int_{\R^n_+} \bigl(\sum_{i = 1}^m \mathbb{S}[\omega_i]^* K (y) v_i\bigr)  \A u (x + y)\dif y,
			\end{split}
		\end{equation*}
		which is the conclusion.
	\end{proof}
	
	\section{Estimates on special convolution operators}\label{sec:convolution}
	The main analytical advancement of this paper is contained in the following estimate for convolution operators with kernels that vanish on a halfspace; in the sequel, we identify $\R^{n-1}\times\{0\}\simeq\R^{n-1}$.
	\begin{proposition}\label{prop:trace}
		Let $s\geq 1$ and $K\in\hold^\infty(\R^n\setminus\{0\})$ be $(s-n)$-homogeneous and satisfy $K\equiv 0$ in $\R^n_-$. Let
		$$
		\mathcal T f(x)\coloneqq \int_{\R^n_+}K(y)f(x+y)\dif y\quad\text{for }f\in\hold^\infty_c(\R^n).
		$$
		Then we have the estimates:
		\begin{enumerate}
			\item\label{itm:s=1} if $s=1$
			$$
			\|\mathcal T f\|_{\lebe^1(\R^{n-1})}\leq c\|f\|_{\lebe^1(\R^n_+)}\quad\text{for }f\in\hold^\infty_c(\R^n),
			$$
			\item\label{itm:s>1} if $s>1$
			$$
			\|\mathcal T f\|_{{\dot{\besov}}{^{s-1}_{1,1}}(\R^{n-1})}\leq c\|f\|_{\lebe^1(\R^n_+)}\quad\text{for }f\in\hold^\infty_c(\R^n).
			$$
		\end{enumerate}
	\end{proposition}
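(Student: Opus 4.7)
The plan is to reduce both estimates to a single statement about the slice function $K(\cdot, 1) \colon \R^{n-1} \to \R$, by combining Fubini with the $(s-n)$-homogeneity of $K$. The crucial preparatory step is establishing \emph{Schwartz-type decay} of $K(\cdot, 1)$ together with all of its derivatives, which is the technical heart of the proof.

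For the decay, I would observe that since $K \in \hold^{\infty}(\R^{n} \setminus \{0\})$ and $K \equiv 0$ on the open halfspace $\R^{n}_{-}$, the Taylor polynomial of $K$ at each point $z_{0} \in H_{\nu} \setminus \{0\}$ of any order agrees with $0$ on an open set (a neighbourhood of $z_{0}$ inside $\R^{n}_{-}$) and therefore vanishes identically. This forces $D^{\alpha} K \equiv 0$ on $H_{\nu} \setminus \{0\}$ for every multi-index $\alpha$. In particular, $K|_{\mathbb S^{n-1}}$ vanishes to infinite order on the equator $\mathbb S^{n-1} \cap H_{\nu}$; combined with $(s-n)$-homogeneity and the fact that the angular distance from $(z', 1)/|(z', 1)|$ to the equator is $\simeq 1/|z'|$ as $|z'| \to \infty$, one obtains for every $\alpha \in \N_{0}^{n-1}$ and $N \in \N_{0}$ a constant $C_{\alpha, N}$ with
\[
|D^{\alpha} K(z', 1)| \leq C_{\alpha, N}(1 + |z'|)^{-N} \quad \text{for every } z' \in \R^{n-1}.
\]

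Next I would apply Fubini to the (possibly differenced) convolution. For part \ref{itm:s=1}, the substitution $z' = y'/y_{n}$ together with $(s-n)$-homogeneity gives $\|K(\cdot, y_{n})\|_{\lebe^{1}(\R^{n-1})} = y_{n}^{s-1}\|K(\cdot, 1)\|_{\lebe^{1}(\R^{n-1})}$, which is a constant in $y_{n}$ when $s = 1$, and Fubini yields $\|\mathcal T f\|_{\lebe^{1}(\R^{n-1})} \leq \|K(\cdot, 1)\|_{\lebe^{1}}\,\|f\|_{\lebe^{1}(\R^{n}_{+})}$. For part \ref{itm:s>1}, I would fix $k \in \N$ with $k > s-1$ and apply the same procedure to the iterated differences $\Delta_{h}^{k} \mathcal T f$ for $h \in \R^{n-1}$, which translate to differences of $K$ in its first $n-1$ coordinates. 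A joint substitution $z' = y'/y_{n}$ and $h' = h/y_{n}$, combined with the exponent bookkeeping $(s-1) + (n-1) - (n+s-2) = 0$, reveals that the inner integral
\[
\int_{\R^{n-1}} \|\Delta_{h}^{k} K(\cdot, y_{n})\|_{\lebe^{1}(\R^{n-1})} \frac{\dif h}{|h|^{n+s-2}}
\]
is independent of $y_{n}$ and equals $\|K(\cdot, 1)\|_{{\dot\besov}{^{s-1}_{1,1}}(\R^{n-1})}$; a final Fubini in $y_{n}$ closes the estimate.

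The finiteness of $\|K(\cdot, 1)\|_{{\dot\besov}{^{s-1}_{1,1}}(\R^{n-1})}$ is checked by splitting into $|h| \leq 1$ and $|h| > 1$: on the first, use $\|\Delta_{h}^{k} K(\cdot, 1)\|_{\lebe^{1}} \leq |h|^{k}\,\|D^{k} K(\cdot, 1)\|_{\lebe^{1}}$ together with $k > s - 1$; on the second, use the crude $\|\Delta_{h}^{k} K(\cdot, 1)\|_{\lebe^{1}} \leq 2^{k}\|K(\cdot, 1)\|_{\lebe^{1}}$, where integrability of $|h|^{-(n+s-2)}$ at infinity holds precisely when $s > 1$, thereby explaining the critical threshold for the Besov estimate. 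The main obstacle is the decay step itself: bare $(s-n)$-homogeneity gives only $|K(y)| \lesssim |y|^{s-n}$, which is \emph{not} integrable on the slice $\{y_{n} = 1\}$ whenever $s \geq 1$; it is the cooperation between $\hold^{\infty}$-smoothness on $\R^{n} \setminus \{0\}$ and vanishing on an entire open halfspace that recovers the Schwartz-type decay on the slice and makes the thresholds $s = 1$ for \ref{itm:s=1} and $s > 1$ for \ref{itm:s>1} sharp.
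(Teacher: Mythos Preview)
Your proof is correct and follows the same overall architecture as the paper's: reduce via Fubini and the homogeneity scaling to the single claim that the slice $K(\,\cdot\,,1)$ lies in $\lebe^1(\R^{n-1})$ (for $s=1$) or in $\dot{\besov}{^{s-1}_{1,1}}(\R^{n-1})$ (for $s>1$), and then verify that claim by exploiting the infinite-order vanishing of $K$ on the equator together with the angular decay $\theta\simeq 1/|z'|$.

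The only substantive difference is in how you establish the decay on the slice. You prove once and for all that $K(\,\cdot\,,1)$ is Schwartz (all derivatives vanish to infinite order at the equator, hence arbitrary polynomial decay after scaling), and then read off both $\lebe^1$- and Besov-membership from standard facts about Schwartz functions. The paper instead argues more frugally: for $s=1$ it invokes only a $\hold^{0,\alpha}$ bound to get decay like $|(y',1)|^{1-n-\alpha}$, and for $s>1$ it uses only $\hold^k$-regularity (with $k=\lfloor s\rfloor+1$) combined with a pointwise scaling estimate $|\Delta^k_h K(x',1)|\lesssim |h|^k(1+|x'|)^{-(n+k-s)}$ for small $h$ and $|K(x',1)|\lesssim(1+|x'|)^{-(n+k-s)}$ for large $h$. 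Your route is conceptually cleaner and treats both cases uniformly, at the cost of using the full $\hold^\infty$ hypothesis; the paper's argument would still go through under weaker finite-order smoothness on $K$, which is not needed here but explains why the authors chose that presentation.
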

	We will use coordinates $x=(x',t)$, $x'\in\R^{n-1}$. The proof consists by first noticing that it suffices to show that $K(\,\cdot\,,1)\in \lebe^1$ (resp. ${\dot{\besov}}{^{s-1}_{1,1}}$) if $s=1$ (resp. $s>1$), followed by checking these claims using the fact that the smoothness and vanishing of $K$ on $\R^n_-$ give us better bounds than homogeneity alone.
	\begin{proof}
		We begin with the proof of \ref{itm:s=1}, the case $s=1$. We first show that it suffices to prove that $K(\,\cdot\,,1)\in\lebe^1 (\R^{n-1})$. 
		
		We have that by a simple change of variable
		\begin{align*}
			\mathcal{T}f(x',0)&=\int_{\R^{n}_+}K(y)f((x',0)+y)\dif y=\int_{(x',0)+\R^{n}_+}K(z-(x',0))f(z)\dif z\\
			&=\int_{\R^{n}_+}K(z-(x',0))f(z)\dif z,
		\end{align*}
		so that by Fubini's theorem
		\begin{align*}
			\int_{\R^{n-1}} |\mathcal{T}f(x',0)|\dif x' &\leq \int_{\R^{n-1}}\int_{\R^n_+}|K(z-(x',0))||f(z)|\dif z   \dif x'\\
			&=\int_{\R^n_+}\left( \int_{\R^{n-1}}|K(z-(x',0))|\dif x'\right) |f(z)|   \dif z.
		\end{align*}
		We will show that the inner integral is independent of $z=(z',t)\in\R^n_+$. We make the change of variable $y'=t^{-1}(z'-x')$ to get
		\begin{align*}
			\int_{\R^{n-1}}|K(z-(x',0))|\dif x'&=\int_{\R^{n-1}}|K(z'-x',t)|\dif x'\\
			&=\int_{\R^{n-1}}t^{1-n}|K(t^{-1}(z'-x'),1)|\dif x'=\int_{\R^{n-1}}|K(y',1)|\dif y'.
		\end{align*}
		Therefore 
		\begin{align}\label{eq:L1forK}
			\int_{\R^{n-1}} |\mathcal{T}f(x',0)|\dif x' &\leq \int_{\R^{n-1}}|K(y',1)|\dif y' \int_{\R^n_+} |f(z)|\dif z.
		\end{align}
		To show that \eqref{eq:L1forK} implies the estimate in \ref{itm:s=1}, we will prove that $K(\,\cdot\,,1)\in\lebe^1 (\R^{n-1})$.
		
		To achieve this, we fix $0<\alpha<1$ and will only use the fact that $K\in\hold^{0,\alpha}_{\locc}(\R^n\setminus\{0\})$ together with a homogeneity argument. We write $\mathbb{S}^{n-2}=\R^{n-1}\cap\mathbb{S}^{n-1}$ and denote, for an arbitrary but fixed $0<r<1$, the neighbourhood $\mathcal{N}\coloneqq\mathbb{S}^{n-2}+B_r(0)$ of $\mathbb{S}^{n-2}$ in $\R^n$. We then define $c_{1}$ to be the $\alpha$-H\"{o}lder seminorm of $K$ on $\mathcal{N}$, i.e., 
		\begin{align*}
			c_{1}\coloneqq\sup_{\substack{x,y\in\mathcal{N},\\ x\neq y}}\frac{|K(x)-K(y)|}{|x-y|^{\alpha}}.
		\end{align*}
		\begin{figure}
			\begin{tikzpicture}[scale=0.85]
				\draw [black!10!white, thick,dotted, fill=black!10!white, opacity=1] (3,0) circle [radius=2];
				\draw [black!10!white, thick, dotted, fill=black!10!white, opacity=1] (-3,0) circle [radius=2];
				\draw [<->] (3,0.15) -- (5,0.15); 
				\node at (4.125,0.4) {$r$};
				\draw[-] (-6,0)--(6,0); 
				\draw[dotted,->] (0,-1.25)--(0,4.5);
				\draw[dotted] (0,-1.75)--(0,-4.5);
				\node at (6.25,0) {$x'$};
				\node at (0,4.85) {$t$};
				\draw[dashed] (-6,3)--(6,3);
				\draw[<->] (0,3.2) -- (3.755,3.2);
				\draw[<->] (0,3.2) -- (-3.755,3.2);
				\node at (1.9,3.4) {$c_{2}$}; 
				\node at (-1.9,3.4) {$c_{2}$}; 
				\node at (7,3) {$t=1$};
				\node[black] at (3,0) {\textbullet};
				\node[black] at (-3,0) {\textbullet};
				\draw[dotted,red,thick] (0,0) circle [radius=3];
				\draw[dotted,thick] (5,-4)--(-5,4);
				\draw[dotted,thick] (-5,-4)--(5,4);
				\draw[ultra thick,red!70!black] (3.8,3)--(6,3);
				\draw[ultra thick,red!70!black] (-3.85,3)--(-6,3);
				\draw[thick, red!70!black] (-3.755,3) circle [radius=0.075];
				\draw[thick, red!70!black] (3.755,3) circle [radius=0.075];
				\node[red] at (2.5,2.45) {$\mathbb{S}^{n-1}$};
				\draw[-,black] (-3,0) -- (3,0);
				\node[black!40!white] at (0,-1.5) {$\mathcal{N}$};
				\draw[black] (-0.25,-1.5) -- (-2,-1);
				\draw[black] (0.25,-1.5) -- (2,-1);
				\node[red!70!black] at (5,3.35) {$\mathcal{M}$};
				\node[red!70!black] at (-5,3.35) {$\mathcal{M}$};
				\node[red!70!black] at (-3.35,1.35) {$\pi[\mathcal{M}]$};
				\node[red!70!black] at (3.35,1.35) {$\pi[\mathcal{M}]$};
				\draw [red!70!black,thick,domain=0:39] plot ({3*cos(\x)}, {3*sin(\x)});
				\draw [red!70!black,thick,domain=141:180] plot ({3*cos(\x)}, {3*sin(\x)});
			\end{tikzpicture}
			\caption{The geometric argument underlying \eqref{eq:maingeometric} in the proof of Theorem~\ref{thm:main}. When $|y'|>c_{2}$, so $(y',1)\in\mathcal{M}:=\{(x',1)\colon\;|x'|>c_{2}\}$,  then $\pi(y',1)$, the projection of $(y',1)$ onto $\mathbb{S}^{n-1}$, belongs to $\mathcal{N}$. }
			\label{fig:geometric}
		\end{figure}
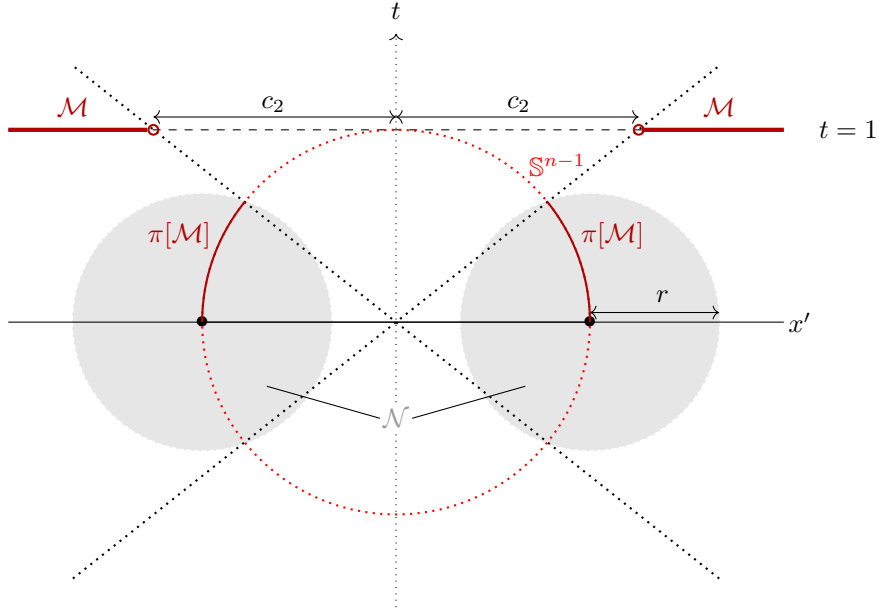
		The geometric argument depicted in Figure~\ref{fig:geometric} shows that there exists a constant $c_{2}>0$ such that 
		\begin{align}\label{eq:maingeometric}
			|y'|>c_{2}\Rightarrow\frac{(y^\prime,1)}{|(y^\prime,1)|}\in\mathcal{N}.
		\end{align}
		From the same picture we see that the orthogonal projections of such points $\tfrac{(y',1)}{|(y',1)|}$ onto $\R^{n-1}$  are contained in $\mathcal{N}$, so
		\begin{align}\label{eq:maingeometric1}
			|y'|>c_{2}\Rightarrow\frac{(y^\prime,0)}{|(y^\prime,1)|}\in\mathcal{N}.
		\end{align}
		Therefore, if $|y'|>c_{2}$, then \eqref{eq:maingeometric},  \eqref{eq:maingeometric1}, and the $(1-n)$-homogeneity of $K$ allow us to conclude for points $(y^\prime,1)$ that
		\begin{align*}
			|K(y^\prime,1)|&=|(y^\prime,1)|^{1-n}\left|K\left(\frac{(y^\prime,1)}{|(y^\prime,1)|}\right)\right|\\
			&=|(y^\prime,1)|^{1-n}\left|K\left(\frac{(y^\prime,1)}{|(y^\prime,1)|}\right)-K\left(\frac{(y^\prime,0)}{|(y^\prime,1)|}\right)\right|\\
			&\leq c_1|(y^\prime,1)|^{1-n}\left|\frac{(y^\prime,1)}{|(y^\prime,1)|}-\frac{(y^\prime,0)}{|(y^\prime,1)|}\right|^\alpha=c_1|(y^\prime,1)|^{1-n-\alpha},
		\end{align*}
		where in the second equality we used the fact that $K=0$ on $\R^{n-1}\cap\mathcal{N}$. Then, writing $c_3\coloneqq\max\{|K(y',1)|\colon\;|y'|\leq c_{2}\}$, we have that
		\begin{align*}
			\int_{\R^{n-1}}|K(y^\prime,1)|\dif y^\prime	&	\leq \int_{\{|y^\prime|\leq c_2\}} |K(y^\prime,1)|\dif y^\prime +c_1\int_{\{|y^\prime|> c_2\}}\dfrac{\dif y^\prime}{|y^\prime|^{n-1+\alpha}}\\
			&= \omega_{n-1}c_{2}^{n-1}c_{3} + c_1\int_{c_2}^\infty\dfrac{r^{n-2}\dif r}{r^{n-1+\alpha}}=\omega_{n-1}c_{2}^{n-1}c_{3}+\frac{c_1}{\alpha c_2^\alpha},
		\end{align*}
		where $\omega_{n-1}$ denotes the $(n-1)$-dimensional Lebesgue  measure of the $(n-1)$-dimensional unit ball. We conclude from \eqref{eq:L1forK} that 
		$$
		\|\mathcal T f\|_{\lebe^1(\R^{n-1})}\leq \|K(\,\cdot\,,1)\|_{\lebe^1(\R^{n-1})}\|f\|_{\lebe^1(\R^n_+)},
		$$
		so the estimate in \ref{itm:s=1} follows.
		
		\medbreak
		
		The proof of \ref{itm:s>1}, i.e., for $s>1$, follows the same structure but is more subtle. We first show that it suffices to prove that $K(\,\cdot\,,1)\in{\dot{\besov}}{^{s-1}_{1,1}}$.
		
		Let $k\coloneqq\lfloor s\rfloor +1$ (or any integer larger than $s$) so
		$$
		\Delta^k_h \mathcal T f(x',0)=\int_{\R^n_+}\Delta^k_h K(x'+y',t)f(y',t)\dif y'\dif t,
		$$
		where the finite difference acts only  on the $x'$-component. We estimate
		$$
		|\Delta^k_h  \mathcal T f(x',0)(x',0)|\leq\int_{\R^n_+}|\Delta^k_h K(x'+y',t)||f(y',t)|\dif y'\dif t.
		$$
		By Fubini's theorem, we have
		\begin{align*}
			\|\mathcal T f(\,\cdot\,,0)\|_{\dot\besov{^{s-1}_{1,1}}(\R^{n-1})}&\leq \int_{\R^n_+}\int_{\R^{n-1}}\int_{\R^{n-1}}|\Delta^k_h K(x'+y',t)|\dfrac{\dif x'\dif h}{|h|^{n+s-2}}|f(y',t)|\dif y'\dif t\\
			&= \int_{\R^n_+}\int_{\R^{n-1}}\int_{\R^{n-1}}|\Delta^k_h K(x',1)|\dfrac{\dif x'\dif h}{|h|^{n+s-2}}|f(y',t)|\dif y'\dif t
			\\
			&=\|K(\,\cdot\,,1)\|_{\dot\besov{^{s-1}_{1,1}}(\R^{n-1})}\|f\|_{\lebe^1(\R^n_+)}.
		\end{align*}
		where the first equality follows by a simple change of variable and the homogeneity of the kernel $K$. It remains to show that $K(\,\cdot\,,1)\in{\dot{\besov}}{^{s-1}_{1,1}}$, i.e., that
		$$
		\int_{\R^{n-1}}\int_{\R^{n-1}}|\Delta^k_h K(x',1)|\dfrac{\dif x'\dif h}{|h|^{n+s-2}}<\infty.
		$$
		To simplify the proof, we will endow $\R^n$ with the $\ell^1$-norm.
		Since \(K \in \hold^{k} (\R^n \setminus \{0\})\),
		we have for every \(x=(x',t) \in \R^n\) with \(\vert x\vert = 1\) and \(h \in \R^n\) such that \(\vert h \vert \leq 1/(2k)\),
		\begin{equation}\label{eq_the4Aeg4johch9Oojae1caiL}
			\vert \Delta^k_h K (x',t)\vert
			\leq c_1 \vert h \vert^k.
		\end{equation}
		By homogeneity of \(K\), for every \(x',\, h \in \R^{n - 1}\) with \(\vert h \vert \leq  (1+\vert x' \vert)/(2k)\), letting \(t = 1/({1 + \vert x'\vert)}\), we have \(\vert(t x', t)\vert = 1\) and thus by \eqref{eq_the4Aeg4johch9Oojae1caiL}
		\begin{equation*}
			\vert \Delta^k_h K (x', 1)\vert
			= t^{n - s} \vert \Delta^k_{th} K (tx', t)\vert
			\leq c_1 t^{n - s} \vert t h\vert^k =  \frac{c_1\vert h \vert^k}{(1 + \vert x'\vert)^{n+k-s}}.
		\end{equation*}
		Hence, we have 
		\begin{align}
			\label{eq_go3quo6Bu2uj1sa5Ookeiche}
			\begin{split}
				\int_{\vert h \vert \leq 1/(2k)} &\int_{\R^{n-1}}
				\frac{|\Delta^k_h K(x',1)|}{|h|^{n+s-2}} \dif x'\dif h\\
				&\leq \int_{\vert h \vert \leq 1/(2k)} \int_{\R^{n-1}}
				\frac{c_1}{|h|^{n+s-2-k}(1 + \vert x'\vert)^{n+k-s}} \dif x'\dif h <\infty.
			\end{split}
		\end{align}
		
		Next, since \(K = 0\) on \(\R^n_-\), we have for every \(x=(x',t)  \in \R^n\) such that \(\vert x \vert = 1\), 
		\begin{equation}
			\label{eq_mohrae7ood2Tad3zaeloo8Ai}
			\vert K (x',t)\vert \leq c_2 |t|^k.
		\end{equation}
		Letting \(t = 1/({1 + \vert x\vert})\), we have \(\vert(t x', t)\vert = 1\) and thus by \eqref{eq_mohrae7ood2Tad3zaeloo8Ai}
		\begin{equation*}
			\vert K (x', 1)\vert = t^{n - s} \vert K \bigl(tx', t \bigr)\vert
			\leq c_2 t^{n+k-s} = \frac{c_2}{(1 + \vert x'\vert)^{n+k-s}},
		\end{equation*}
		and thus, since \(k \ge 2\),  
		\begin{align}
			\begin{split}
				\label{eq_diveshiabeakaiqu2nooShie}
				\int_{\vert h \vert \ge 1/(2k)} &\int_{\R^{n-1}}
				\frac{|\Delta^k_h K(x',1)|}{|h|^{n+s-2}} \dif x'\dif h\\
				& \leq \sum_{i = 0}^k \binom{k}{i} \int_{\vert h \vert \ge 1/(2k)} \int_{\R^{n-1}}
				\frac{|K(x' + ih,1)|}{|h|^{n+s-2}}\dif x'\dif h \\
				& \leq 2^k \int_{\vert h \vert \ge 1/(2k)} \int_{\R^{n-1}}
				\frac{c_2}{|h|^{n + s - 2}(1 + \vert x'\vert)^{n+k-s}} \dif x'\dif h <\infty.
			\end{split}
		\end{align}
		The conclusion follows by adding together inequalities \eqref{eq_go3quo6Bu2uj1sa5Ookeiche} and \eqref{eq_diveshiabeakaiqu2nooShie}.
	\end{proof}
	\begin{remark}
		Our approach to the trace inequalities given in Proposition \ref{prop:trace} can be used to give a very simple argument to the classical Theorem \ref{thm:gag+usp}. On the one hand, the variant of the Sobolev integral formula in Proposition \ref{prop:sobolev} gives a representation formula in ${\dot{\sobo}}{^{k,1}}$ supported on a pointed cone; on the other hand, the elementary argument in the proof of Proposition \ref{prop:trace} reduces the traces inequalities to checking that $\hold_c^\infty$ is contained in ${\lebe}{^1}$ and ${\dot{\besov}}{^{k-1}_{1,1}}$.
	\end{remark}

	\section{Proofs of the trace theorems}\label{sec:trace}
	In this section, we prove the trace Theorems~\ref{thm:trace_k=1} and \ref{thm:trace_k>1}, which we present here in unified form. Recall the space 
	$$
	\mathrm{T}_k(H_\nu ,V) \coloneqq \left\{(f_0,f_1,\ldots,f_{k-1})\colon \begin{array}{c}f_{k-1}\in\lebe^1(H_\nu ,V),\\ f_j\in\dot{\besov}{^{k-1-j}_{1,1}}(H_\nu ,V),\,0\leq j\leq k-2 \end{array} \right\},
	$$
	which is Banach with respect to the canonical norm
	\begin{align*}
		\|(f_0,f_1,\ldots,f_{k-1}) \|_{\mathrm{T}_{k}(H_{\nu})}:= \Big(\sum_{j=0}^{k-2} \|f_{j}\|_{{\dot{\besov}}{_{1,1}^{k-1-j}}(H_\nu)} \Big) + \|f_{k-1}\|_{\lebe^{1}(H_{\nu})}.
	\end{align*}
	In the sequel, we denote for $u\in\hold_{c}^{\infty}(\R^{n},V)$
	$$
	\tr_k u\coloneqq(u,\partial_\nu u,\ldots,\partial_\nu^{k-1}u)\big|_{H_\nu}.
	$$
	We will now prove the following:
	\begin{theorem}\label{thm:trace_unified}
		Let \(n \ge 2\) and $\A$ be a differential operator as in \eqref{eq:A}  of order $k\geq 1$. Then $\A$ is boundary elliptic in direction $\nu$ if and only if there exists a constant $c>0$ such that the estimate
		$$
		\|\tr_k u\|_{\mathrm T_k(H_\nu)}\leq c\|\A u\|_{\lebe^1(H_\nu^+)}
		$$
		holds $\text{for all }u\in\hold^\infty_c(\R^n,V)$.
	\end{theorem}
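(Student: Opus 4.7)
For the sufficiency direction I intend to couple Smith's representation formula (Theorem~\ref{thm:smith}) with the convolution bounds of Proposition~\ref{prop:trace} applied to all normal derivatives of the kernel. Extending the Smith kernel $K$ by zero on $H_\nu^-$ preserves its smoothness on $\R^n\setminus\{0\}$, since $K$ vanishes identically on the open halfspace $H_\nu^-$, and the representation formula can therefore be rewritten as $u(x)=\int_{\R^n}K(z-x)\,\A u(z)\dif z$. Differentiating this $j$ times in $x$ and reverting to $y=z-x$ yields, for $0\leq j\leq k-1$,
$$
\partial_\nu^j u(x)=(-1)^j\int_{H_\nu^+}(\partial_\nu^j K)(y)\,\A u(x+y)\dif y.
$$
Each kernel $K_j\coloneqq(-1)^j\partial_\nu^j K$ is of class $\hold^\infty$ on $\R^n\setminus\{0\}$, is $(k-j-n)$-homogeneous, and vanishes identically on $H_\nu^-$, so it satisfies the hypotheses of Proposition~\ref{prop:trace} with $s=k-j\geq 1$. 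Applying part~\ref{itm:s=1} to the top derivative ($j=k-1$, $s=1$) and part~\ref{itm:s>1} to the lower ones ($0\leq j\leq k-2$, $s>1$), componentwise in the matrix entries of $K_j$, produces
$$
\|\partial_\nu^{k-1}u\|_{\lebe^1(H_\nu)}+\sum_{j=0}^{k-2}\|\partial_\nu^j u\|_{\dot{\besov}{^{k-1-j}_{1,1}}(H_\nu)}\leq c\|\A u\|_{\lebe^1(H_\nu^+)},
$$
which is exactly the $\mathrm T_k(H_\nu)$ trace bound.

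For the necessity direction, suppose boundary ellipticity fails in direction $\nu$, so that $\A(\xi_0+\imag\nu)v_\C=0$ for some $\xi_0\in\R^n$ and $v_\C\in V_\C\setminus\{0\}$. Then
$$
\phi(x)\coloneqq\Re\bigl[e^{\imag(\xi_0+\imag\nu)\cdot x}v_\C\bigr]
$$
(or its imaginary part, chosen so as to produce a nontrivial trace below) is a real, $V$-valued element of $\ker\A$ obeying $|\phi(x)|\leq c e^{-\nu\cdot x}$, hence decaying exponentially on $H_\nu^+$. Fixing a cutoff $\chi\in\hold_c^\infty(\R^n)$ equal to $1$ near the origin, set $u_R(x)\coloneqq\chi(x/R)\phi(x)$. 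Since $\A\phi=0$, Leibniz' rule forces every surviving term of $\A u_R$ to carry at least one derivative of $\chi(\cdot/R)$, yielding $|\A u_R(x)|\leq c R^{-1}e^{-\nu\cdot x}$ on $H_\nu^+\cap\{|x|\leq 2R\}$; integrating against the tangential volume then gives $\|\A u_R\|_{\lebe^1(H_\nu^+)}\leq c R^{n-2}$. In parallel, $\partial_\nu^{k-1}u_R|_{H_\nu}$ agrees with $\chi(\cdot/R,0)\,(\partial_\nu^{k-1}\phi)|_{H_\nu}$ up to corrections of size $O(R^{n-2})$, and $(\partial_\nu^{k-1}\phi)|_{H_\nu}$ is a fixed non-vanishing function on $\R^{n-1}$; hence $\|\partial_\nu^{k-1}u_R\|_{\lebe^1(H_\nu)}\geq c R^{n-1}$ for $R$ large. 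Since this is a summand of $\|\tr_k u_R\|_{\mathrm T_k(H_\nu)}$, the trace inequality would force $R^{n-1}\leq c R^{n-2}$, which is absurd for $R$ large.

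The principal obstacle is the sufficiency direction, specifically Proposition~\ref{prop:trace}: the Smith kernel is only $(k-n)$-homogeneous on $\R^n\setminus\{0\}$, which by homogeneity alone produces the borderline non-integrable bound $|K(y',1)|\lesssim(1+|y'|)^{k-n}$ on the trace slice, too weak for the $\lebe^1$ or $\dot{\besov}{^{k-1}_{1,1}}$ estimates. Combining homogeneity with the vanishing of $K$ on $H_\nu^-$ and its $\hold^\alpha$-regularity sharpens the decay to $(1+|y'|)^{k-n-\alpha}$, which is precisely what the trace estimate requires. The necessity, by contrast, is a routine Lopatinski\u{\i} plane-wave-plus-cutoff construction relying only on the existence of a complex null direction for $\A(\,\cdot\,+\imag\nu)$.
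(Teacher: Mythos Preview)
Your sufficiency argument is exactly the paper's: invoke the Smith representation of Theorem~\ref{thm:smith}, differentiate $j$ times in the normal direction, and feed the resulting $(k-j-n)$-homogeneous kernel $K_j=\partial_\nu^j K$ into Proposition~\ref{prop:trace} with $s=k-j$. (Rewriting the representation as $u(x)=\int K(z-x)\A u(z)\dif z$ and picking up the sign $(-1)^j$ is harmless for the norm estimate.)

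Your necessity argument is correct but genuinely different from the paper's Lemma~\ref{lem:nec_bdry_ell}. The paper \emph{localises}: given $\A(\eta+\imag\nu)v=0$, it builds a holomorphic $f_\varepsilon$ with $f_\varepsilon^{(k-1)}(z)=(z+2\varepsilon)^{-1}$, sets $u_\varepsilon(x)=f_\varepsilon(x\cdot\nu+\imag x\cdot\eta)v$, cuts off on a fixed cube near a boundary point, and sends $\varepsilon\searrow 0$; the $\lebe^1(H_\nu)$-norm of $D^{k-1}u_\varepsilon$ diverges logarithmically while $\|\A(\psi_\varepsilon u_\varepsilon)\|_{\lebe^1(H_\nu^+)}$ stays bounded. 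You instead run a \emph{scaling} argument: a single exponentially decaying null solution $\phi(x)=\Re[e^{\imag(\xi_0+\imag\nu)\cdot x}v_\C]$, cut off at scale $R\to\infty$, for which the normal decay forces $\|\A u_R\|_{\lebe^1(H_\nu^+)}=O(R^{n-2})$ while $\|\partial_\nu^{k-1}u_R\|_{\lebe^1(H_\nu)}\gtrsim R^{n-1}$. Your route is more elementary---no holomorphic primitives, no branch cuts, no case split according to whether $\xi_0$ and $\nu$ are linearly independent---but it is specific to the halfspace geometry; the paper's singularity construction transports to bounded domains and is recycled (with a different exponent) in the necessity half of Theorem~\ref{thm:main}. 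One small wording point: ``fixed non-vanishing function on $\R^{n-1}$'' should be read as ``not identically zero'', since $(\partial_\nu^{k-1}\phi)|_{H_\nu}(x')=\Re[w\,e^{\imag\xi_0'\cdot x'}]$ with $w=(\imag(\xi_0\cdot\nu+\imag))^{k-1}v_\C\neq 0$ can certainly have zeros when $\xi_0'\neq 0$; what you actually need and have is that its $\lebe^1$-norm over $\{|x'|\leq R\}$ grows like $R^{n-1}$, which follows from periodicity.
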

	\begin{remark}\label{rmk:traces}
		We remark that since $u$ admits traces in $\dot{\besov}{^{k-1}_{1,1}}(H_\nu)$, all the derivatives of $u$ in the tangential direction have the suitable Besov regularity. Writing $D_\tau$ for the gradient in the tangential direction (of $H_\nu$), we have that $D_\tau^j u$ admits traces in $\dot{\besov}{^{k-1-j}_{1,1}}(H_\nu)$ for \textbf{all} $j=0,1,\ldots, k-1$. We can thus write down as a corollary of Theorem \ref{thm:trace_unified} the following estimates for $u\in\hold^\infty_c(\R^n,V)$:
		\begin{align*}
			\|D^j u\|_{{\dot{\besov}}{^{k-j-1}_{1,1}}(H_{\nu})}&\leq c\|\A u\|_{\lebe^1(H_{\nu}^{+})}\quad\text{for }j=0,1,\ldots, k-2,\\
			\|D_\tau^{k-1} u\|_{{\dot{\besov}}{^{0}_{1,1}}(H_{\nu})}&\leq c\|\A u\|_{\lebe^1(H_{\nu}^{+})},\\
			\|\partial_n^{k-1} u\|_{\lebe^1(H_{\nu})}&\leq c\|\A u\|_{\lebe^1(H_{\nu}^{+})}.
		\end{align*}
		In particular, the \textbf{only}  derivative of order at most $(k-1)$ for which the trace lacks Besov regularity is the pure $(k-1)$th normal derivative.
	\end{remark}

	For the remainder of the paper, we suppress the subscript from the notation for $H_\nu$, $H^\pm_\nu$ and write $x=(x^\prime,t)$ for a representation of $x\in\R^n$ in $H,\,H^\perp$ coordinates.
	We begin by proving necessity of boundary ellipticity. 
	\begin{lemma}\label{lem:nec_bdry_ell}
		Let \(n \ge 2\) and $\A$ be a differential operator as in \eqref{eq:A}   of order $k\geq 1$. Suppose that
		there exists a constant $c>0$ such that the estimate
		\begin{align}\label{eq:bogoargentino}
			\|u\|_{\dot{\sobo}{^{k-1,1}}(H_\nu)}\leq c\|\A u\|_{\lebe^1(H_\nu^+)}
		\end{align}
		holds for all $u\in\hold^\infty_c(\R^n,V)$. Then $\A$ is boundary elliptic in direction $\nu$.
	\end{lemma}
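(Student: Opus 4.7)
The plan is to proceed by contraposition: assuming $\A$ is not boundary elliptic in direction $\nu$, I will construct truncated oscillating plane-wave test functions that violate \eqref{eq:bogoargentino}. Failure of boundary ellipticity yields $\xi_0\in\R^n$ and $v_0\in V_\C\setminus\{0\}$ with $\A(\xi_0+\imag\nu)v_0=0$. Setting $\zeta\coloneqq\xi_0+\imag\nu\in\C^n$, fix once and for all a cutoff $\varphi\in\hold_c^\infty(\R^n)$ with $\varphi\geq 0$ and $\int_{H_\nu}\varphi(x',0)\,\dif x'>0$, and consider, for each $R>0$,
\begin{equation*}
U_R(x)\coloneqq\varphi(x)\,\e^{\imag R\zeta\cdot x}\,v_0=\varphi(x)\,\e^{\imag R\xi_0\cdot x}\,\e^{-R\nu\cdot x}\,v_0\in\hold_c^\infty(\R^n,V_\C).
\end{equation*}
Applying \eqref{eq:bogoargentino} to an appropriate choice of $\Re U_R$ or $\Im U_R$ will then produce the contradiction for large $R$.

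For the upper bound on $\|\A U_R\|_{\lebe^1(H_\nu^+)}$: because $\A(\e^{\imag R\zeta\cdot x}v_0)=(\imag R)^k\A(\zeta)v_0\,\e^{\imag R\zeta\cdot x}=0$, the Leibniz rule forces every nonzero term of $\A U_R$ to carry at least one derivative on $\varphi$, so each such term is bounded pointwise by $CR^{k-1}|\partial^\beta\varphi(x)|\,\e^{-R\nu\cdot x}$ for some multi-index $|\beta|\geq 1$. Integrating $\e^{-R\nu\cdot x}$ in the normal direction gives $\int_0^\infty\e^{-Rt}\,\dif t=1/R$, producing one extra factor of $R^{-1}$ and hence
\begin{equation*}
\|\A U_R\|_{\lebe^1(H_\nu^+)}\leq CR^{k-2}.
\end{equation*}

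For the lower bound on $\|D^{k-1}U_R\|_{\lebe^1(H_\nu)}$, I focus on the pure normal component $\partial_\nu^{k-1}U_R$. Its leading contribution with no derivative on $\varphi$ is $(\imag R\,\zeta\cdot\nu)^{k-1}\varphi(x)\e^{\imag R\zeta\cdot x}v_0$, and since $\zeta\cdot\nu=\xi_0\cdot\nu+\imag\neq 0$, the scalar $c_0\coloneqq(\imag\zeta\cdot\nu)^{k-1}\in\C$ is nonzero. Thus $c_0 v_0\in V_\C\setminus\{0\}$, and writing $c_0v_0=w_1+\imag w_2$ with $w_1,w_2\in V$, one has $(w_1,w_2)\neq(0,0)$. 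Restricting to $H_\nu$, where the plane wave reduces to $\e^{\imag R\xi_0'\cdot x'}$ (with $\xi_0'$ the tangential part of $\xi_0$), the leading term of $\partial_\nu^{k-1}(\Re U_R)|_{H_\nu}$ becomes
\begin{equation*}
R^{k-1}\varphi(x',0)\bigl(w_1\cos(R\xi_0'\cdot x')-w_2\sin(R\xi_0'\cdot x')\bigr),
\end{equation*}
whose $\lebe^1(H_\nu)$-norm is at least $cR^{k-1}$ for all sufficiently large $R$ by a standard equidistribution argument: the $\theta$-average of $|w_1\cos\theta-w_2\sin\theta|$ is a positive constant depending only on $(w_1,w_2)$, so $\int_{H_\nu}|\varphi(x',0)||w_1\cos(R\xi_0'\cdot x')-w_2\sin(R\xi_0'\cdot x')|\,\dif x'$ tends to a positive limit as $R\to\infty$ (the case $\xi_0'=0$ being immediate). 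The subleading Leibniz terms contribute only $O(R^{k-2})$ and are absorbed.

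Combining the two bounds in \eqref{eq:bogoargentino} forces $cR^{k-1}\leq c'R^{k-2}$ for arbitrarily large $R$, which is absurd. The main delicate point will be the oscillatory lower bound and its interaction with the passage from the complex-valued $U_R$ to a real test function in $\hold_c^\infty(\R^n,V)$: since multiplication by the nonzero scalar $c_0$ is an $\R$-linear isomorphism of $V_\C$, the pair $(\Re(c_0v_0),\Im(c_0v_0))$ cannot both vanish, so at least one of $\Re U_R,\Im U_R$ inherits the $R^{k-1}$ lower bound and the contradiction is reached.
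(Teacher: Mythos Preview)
Your argument is correct and complete. The exponential decay $e^{-R\nu\cdot x}$ on $H_\nu^+$ is exactly what buys the extra factor $R^{-1}$ in the upper bound, and the equidistribution step for the oscillatory trace integral is handled carefully (including the degenerate case $\xi_0'=0$, where one simply picks whichever of $\Re U_R,\Im U_R$ has nonvanishing leading coefficient).

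This is, however, a genuinely different construction from the paper's. The paper follows Aronszajn's original necessity argument: it exploits that maps of the form $u(x)=f(x\cdot\nu+\imag\, x\cdot\eta)v$ are $\A$-free whenever $f$ is holomorphic, and chooses $f_\varepsilon$ to be a $(k-1)$-fold primitive of $(z+2\varepsilon)^{-1}$. After cutting off, the $\A$-side stays \emph{uniformly bounded} in $\varepsilon$ (the Leibniz remainder involves only derivatives of order $\leq k-1$, which are at worst logarithmically singular and hence integrable on a fixed compact set), while the trace norm blows up like $\operatorname{arsinh}(1/\varepsilon)$. This forces a case distinction according to whether $\eta$ and $\nu$ are linearly independent; when they are parallel one has $\A(\nu)v=0$ and a separate one-dimensional construction is used.

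Your oscillatory plane-wave approach trades the holomorphic machinery and the concentration parameter $\varepsilon\to 0$ for a frequency parameter $R\to\infty$. It avoids the case split almost entirely (the tangential-frequency dichotomy $\xi_0'=0$ versus $\xi_0'\neq 0$ is a one-line remark), and yields a polynomial gap $R^{k-1}$ versus $R^{k-2}$ rather than a logarithmic one. The paper's route, on the other hand, connects more transparently to the $\C$-ellipticity picture used elsewhere in the article (Figure~\ref{fig:holshift} and the proofs in \cite{BDG,GR,GRVS}), where the same holomorphic ansatz drives the sharpness of trace theorems on curved domains.
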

	\begin{proof}
		The proof relies on the fact that, if $\A$ is elliptic but not boundary elliptic in direction $\nu$, there exists $\eta\in\R^{n}$ such that $\A(\eta+\imag\nu)v=0$ for some $v\in V+\imag V$. 
		We consider separately the cases when $\eta$ and $\nu$ are linearly independent or not.
		
		\medbreak
		
		If $\eta$ and $\nu$ are linearly independent, we will use coordinates $x=(x_1,x_2,x^{\prime\prime})\in\R^n$, where $x_1=x\cdot\nu$, $x_2=x\cdot\eta$, and $x^{\prime\prime}\in\{\nu,\eta\}^\perp$.
		In this notation, we have that maps $u(x)=f(x_1+\imag x_2)v$ satisfy $\A u(x)=0$ whenever $f$ is holomorphic at $x_1+\imag x_2$ (see, e.g.,  \cite[Lem.~3.2]{GR} or \cite[Lem.~2.5]{BDG}).
		
		We will use an idea originating in the necessity proof of \cite[Thm.~V]{Aronszajn}. We choose $f=f_\varepsilon\colon\C\setminus(-\infty,-2\varepsilon]\rightarrow\C$ be a primitive of $(z+2\varepsilon)^{-1}$ for some $\varepsilon\in(0,1)$. We mean this in the following sense: let $f^{(k-1)}_\varepsilon(z)=(z+2\varepsilon)^{-1}$, where the exponent denotes $k-1$ complex derivatives. For $k>1$, this procedure requires choosing a branch of the logarithm, hence the restriction on the domain of $f_\varepsilon$.
		
		We write $u_\varepsilon(x)=f_\varepsilon(x_1+\imag x_2)v$. We consider cubes $Q_\varepsilon=(-\varepsilon,1)\times(-1,1)^{n-1}$ and a cut-off function $\rho\in\hold^\infty_c((-2,2)^n)$ such that $\rho=1$ in $[-1,1]^n$. We also choose $\varphi_\varepsilon\in\hold^\infty(\R^n)$ be such that $\spt\varphi_\varepsilon\subset\{x\in\R^n\colon x_1>-2\varepsilon\}$ and $\varphi_\varepsilon=1$ in $\{x\in\R^n\colon x_1\geq-\varepsilon\}$. Finally, we set $\psi_\varepsilon=\rho\varphi_\varepsilon$, which has the crucial properties that:
		\begin{enumerate}
			\item[(i)] $\psi_\varepsilon=1$ in $Q_\varepsilon$;
			\item[(ii)] for $|\alpha|\leq k$, we have $|\partial^\alpha\psi_\varepsilon(x)|= \|\partial^\alpha \rho\|_{\lebe^\infty}=c$ for $x\in H^+$;
			\item[(iii)] $\psi_\varepsilon u_\varepsilon\in\hold^\infty_c(\R^n,V)$.
		\end{enumerate}
		The latter implies that $\psi_\varepsilon u_\varepsilon$ is admissible for the estimate in \eqref{eq:bogoargentino}. We compute:
		$$
		\A(\psi_\varepsilon u_\varepsilon)=\psi_\varepsilon \A u_\varepsilon+\sum_{\substack{|\alpha| + |\beta|=k\\ \vert \alpha \vert < k}}c_{\alpha,\beta}\partial^\alpha u_\varepsilon\partial^\beta \psi_\varepsilon.
		$$
		Since $\A u_{\varepsilon}=0$, this implies that
		\begin{align*}
			\|\A(\psi_\varepsilon u_\varepsilon)\|_{\lebe^1(H^+)}&\leq c\sum_{|\alpha|\leq k-1}\|\partial^\alpha u_\varepsilon\|_{\lebe^1(H^+\cap(-2,2))^n}.
		\end{align*}
		Due to the structure of $f_\varepsilon$, the most singular term on the right hand side is no worse than
		$$
		\int_{(-2,2)^2}\dfrac{\dif x}{|(x_1,x_2)|},
		$$
		which is clearly finite. On the other hand, we have that
		\begin{align*}
			\int_H |D^{k-1}u_\varepsilon(0,x_2,x^{\prime\prime})|\dif\,(x_2,x^{\prime\prime})&\geq \int_{[-1,1]^{n-2}}\int_{-1}^1\dfrac{\dif x_2}{|(2\varepsilon,x_2)|}\dif x^{\prime\prime}\sim\mathrm{arsinh}(\tfrac{1}{\varepsilon}) \to \infty
		\end{align*}
		as $\varepsilon\searrow0$. Thus, we have obtained a contradiction when $\eta$ is not parallel to $\nu$.
		
		\medbreak
		
		If $\eta$ and $\nu$ are linearly dependent, then \(\A(\nu)v = 0\), and we proceed similarly to the previous case, defining now \(u_\varepsilon (x) = g(x \cdot \nu) v\), with a function \(g \in \hold^\infty (\R)\) chosen in such a way that \(g (0) = 1\).
	\end{proof}
	
	The proof of the trace theorem is  easily ensembled from the blocks we have:
	\begin{proof}[Proof of Theorem~\ref{thm:trace_unified}]
		The necessity of boundary ellipticity follows at once from Lemma~\ref{lem:nec_bdry_ell}. Assume next that $\A$ is boundary elliptic in direction $\nu$. We can identify $H_\nu$ with $\R^{n-1}$ and $H_\nu^+$ with $\R^n_+$. Let $u\in\hold^\infty_c(\R^n,V)$, so by Theorem \ref{thm:smith}, we have that we can write
		$$
		u(x)=\int_{\R^n_+} K(x+y)\A u(y)\dif y,
		$$
		where $K$ is smooth away from zero, $(k-n)$-homogeneous and vanishes on $\R^n_-$. Let $j=0,1,\ldots, k-1$. Then
		$$
		\partial_n^j u(x)=\int_{\R^n_+} K_j(x+y)\A u(y)\dif y,\quad\text{where }K_j=\partial_n^j K.
		$$
		Therefore $K_j$ is smooth away from zero, $(k-j-n)$-homogeneous and vanishes on $\R^n_-$. We can thus apply Proposition \ref{prop:trace} with $s=k-j\geq 1$ and obtain the trace inequalities
		\begin{align*}
			\|\partial_n^j u\|_{{\dot{\besov}}{^{k-j-1}_{1,1}}(\R^{n-1})}&\leq c\|\A u\|_{\lebe^1(\R^n_+)}\quad\text{for }j=0,1,\ldots, k-2,\\
			\|\partial_n^{k-1} u\|_{\lebe^1(\R^{n-1})}&\leq c\|\A u\|_{\lebe^1(\R^n_+)}.
		\end{align*}
		The proof is complete.
	\end{proof}

	\section{Proof of the Sobolev estimate}\label{sec:sob}
	In this section we prove the main Theorem~\ref{thm:main}. The proof follows from the trace Theorems \ref{thm:trace_k=1} and \ref{thm:trace_k>1} and the fact that boundary ellipticity implies cancellation. This latter observation is presented in the following:
	
	\begin{proposition}\label{prop:bc_ec}
		If \(n \ge 2\), if $\A$ be an operator which is elliptic and boundary elliptic in some direction $\nu\in\mathbb{S}^{n-1}$, then $\A$ is canceling, i.e., \(\A \) satisfies \eqref{eq:canc}.
	\end{proposition}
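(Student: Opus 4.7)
The strategy is to argue by contradiction via Liouville's theorem. Suppose $\A$ is elliptic and boundary elliptic in direction $\nu$, but fails to be canceling, so that there exists $w_0 \in W \setminus \{0\}$ with $w_0 \in \operatorname{im} \A(\xi)$ for every $\xi \in \R^n \setminus \{0\}$. By ellipticity, the equation $\A(\xi) v = w_0$ has a unique solution $v = v(\xi) \in V$, giving a smooth, $(-k)$-homogeneous, nowhere vanishing map $v \colon \R^n \setminus \{0\} \to V$. I will exhibit a complex line on which $v$ extends to an entire $V_{\C}$-valued function that must vanish by Liouville's theorem, contradicting $v \ne 0$.

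The first step is to upgrade the real image condition to a complex one. Whenever $\A(\zeta)$ has full column rank $\dim V$ over $\C$, the membership $w_0 \in \operatorname{im}_{\C} \A(\zeta)$ is equivalent to the simultaneous vanishing of the $(\dim V + 1) \times (\dim V + 1)$ minors of the augmented matrix $[\A(\zeta) \mid w_0]$. Each such minor is a polynomial in $\zeta \in \C^n$ that vanishes on $\R^n \setminus \{0\}$ by non-cancellation; since a polynomial on $\C^n$ that vanishes on the real subspace $\R^n$ must vanish identically, we obtain $w_0 \in \operatorname{im}_{\C} \A(\zeta)$ whenever $\A(\zeta)$ is injective over $\C$.

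Next, since $n \ge 2$, fix $\xi' \in \R^n$ not parallel to $\nu$ and define $\Phi \colon \C \to V_{\C}$ by requiring $\A(\xi' + z\nu) \Phi(z) = w_0$. Injectivity of $\A(\xi' + z\nu)$ holds for every $z \in \C$: for real $z$ this is ellipticity since $\xi' + z\nu \ne 0$, while for $\Im z \ne 0$ the $k$-homogeneity identity
\begin{equation*}
\A(\xi' + z\nu) = (\Im z)^k \, \A\bigl(\tfrac{\xi' + \Re z \cdot \nu}{\Im z} + \imag \nu\bigr)
\end{equation*}
reduces injectivity to the boundary ellipticity of $\A$ in direction $\nu$. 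Combined with the first step, $\Phi$ is well defined on all of $\C$; local holomorphy follows from the implicit function theorem applied to a suitable $(\dim V) \times (\dim V)$ minor of $\A$, and uniqueness of $\Phi(z)$ produces a single entire function on $\C$.

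Finally, since $\A(\xi' + z\nu) = z^k \A(\nu) + O(z^{k-1})$ with $\A(\nu)$ injective, one has $|\Phi(z)| \lesssim (|\xi'|^2 + |z|^2)^{-k/2} \to 0$ as $|z| \to \infty$ (using $k \ge 1$). Liouville's theorem then forces $\Phi \equiv 0$, in contradiction with $\Phi(0) = v(\xi') \ne 0$. The main obstacle is the rigorous execution of the first step, where the real non-cancellation hypothesis must be pushed to the complex setting by the polynomial identity argument; once this is in place, the construction of the entire function $\Phi$ and the Liouville argument amount to careful bookkeeping of the two injectivity hypotheses along the complex line $\xi' + \C \nu$.
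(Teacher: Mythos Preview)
Your argument is correct and takes a genuinely different route from the paper's.

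The paper proceeds by restricting $\A$ to a two-dimensional subspace $\operatorname{span}\{\nu,e\}$ and observing that, by a short linear-algebra computation, ellipticity together with boundary ellipticity in direction $\nu$ forces the restricted operator to be $\C$-elliptic on that plane; it then cites the known implication ``$\C$-elliptic $\Rightarrow$ canceling'' from \cite{GR,GRVS} to conclude. Your proof, by contrast, is self-contained: assuming non-cancellation you produce the rational map $v(\xi)=\A(\xi)^{-1}w_0$, push the image condition to $\C^n$ by the minor/polynomial-identity argument, and then run Liouville along the complex line $\xi'+\C\nu$, where injectivity of $\A(\xi'+z\nu)$ is exactly ellipticity (real $z$) and boundary ellipticity (non-real $z$, via homogeneity). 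Both proofs ultimately exploit the same injectivity along $\xi'+\C\nu$; the paper packages this as ``$\C$-ellipticity on a $2$-plane'' and outsources the cancellation step, while you derive the contradiction directly from complex analysis. The paper's route yields the slightly stronger intermediate statement that $\A$ restricted to any $2$-plane through $\nu$ is $\C$-elliptic, whereas your route avoids any external black box and makes the mechanism (decay of a $(-k)$-homogeneous entire function) completely transparent.
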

	
	The assumption that \(n \ge 2\) is essential, as for \(n = 1\) there are no canceling operators.
	
	\begin{proof}[Proof of Proposition~\ref{prop:bc_ec}]
		Without loss of generality, we can say that $\A$ is boundary elliptic in direction $e_1$, where $\{e_j\}_{j=1}^n$ is a standard orthonormal basis of $\R^n$. Define the operator $\A_1$ by $\A_1(\xi)=\A(\xi)$ for $\xi\in\mathrm{span}\{e_1,e_2\}\simeq\R^2$. By definition, $\A_1$ is elliptic and boundary elliptic in direction $e_1$ on $\R^2$. We claim that $\A_1$ is $\C$-elliptic, i.e.,
		\begin{align}\label{eq:C-ell}
			\ker_\C \A_1(\xi_1+\imag \xi_2)=\{0\}
		\end{align}
		for all linearly independent $\xi_1,\,\xi_2\in\R^2$ (if $\xi_1,\,\xi_2$ are not linearly independent, then \eqref{eq:C-ell} follows by ellipticity and homogeneity of $\A$). By linear independence of $\xi_i$, we can find $\lambda\in\C\setminus\{0\}$ such that $\lambda(\xi_1+\imag\xi_2)=\xi+\imag e_1$, for some $\xi\in\R^2$. In fact, writing $\xi_{1}=(\xi_{11},\xi_{21})$ and $\xi_{2}=(\xi_{12},\xi_{22})$, we have $\lambda(\xi_1+\imag\xi_2)=\xi+\imag e_1$ for some $\lambda=\mathrm{Re}(\lambda)+\imag\mathrm{Im}(\lambda)\in\mathbb{C}\setminus\{0\}$ and some $\xi\in\R^{2}$ if and only if 
		\begin{align*}
			\begin{cases}
				\xi_{11}\mathrm{Im}(\lambda)+\xi_{12}\mathrm{Re}(\lambda) = 1, \\ 
				\xi_{21}\mathrm{Im}(\lambda)+\xi_{22}\mathrm{Re}(\lambda) = 0, 
			\end{cases}
		\end{align*}
		and this is clearly solvable for $(\mathrm{Re}(\lambda),\mathrm{Im}(\lambda))\in\R^{2}\setminus\{0\}$ by the linear independence of $\xi_{1},\xi_{2}$. Now, by homogeneity of $\A$, we have that $\ker_\C\A_{1}(\xi_1+\imag\xi_2)=\ker_\C\A_{1}(\xi+\imag e_1)$, so that \eqref{eq:C-ell} holds by boundary ellipticity of $\A_1$ in direction $e_1$.
		
		It then follows by \cite[Lem.~3.2]{GR} or \cite{GRVS} that $\A_1$ is canceling, so that
		\begin{align*}
			\bigcap_{\xi\in\R^n\setminus\{0\}}\mathrm{im\,}\A(\xi)
			\subset\bigcap_{\xi\in\mathrm{span}\{e_1,e_2\}\setminus\{0\}}\mathrm{im\,}\A(\xi)
			=\bigcap_{\xi\in\mathrm{span}\{e_1,e_2\}\setminus\{0\}}\mathrm{im\,}\A_1(\xi)
			=\{0\},
		\end{align*}
		which concludes the proof.
	\end{proof}
	We can now proceed with the
	\begin{proof}[Proof of Theorem~\ref{thm:main}]
		We will now show that \ref{itm:main_b} holds, using both \ref{itm:main_a} and Theorem \ref{thm:trace_unified}. Let $u\in\hold^\infty_c(\R^n,V)$. Using the extension Theorem \ref{thm:extension_many_derivatives}, we find $U\in\dot\sobo{^{k,1}}(H^-,V)$ such that $D^j U|_{H}=D^j u|_{H}$ for $j=0,1,\ldots k-1$ (cf. Remark \ref{rmk:traces}) and
		\begin{equation}
			\label{eq_aequo6taa9oohied6waotahN}
			\|D^{k}U\|_{\lebe^1(H^-)}\leq c\|\tr_k u\|_{\mathrm T_k(H)}.
		\end{equation}
		Define an extension operator $Eu$ by $u$ in $H^+$ and by $U$ in $H^-$. We now check that $\A Eu\in\lebe^1 (\R^{n},W)$. This will enable us to use the full-space estimate \cite[Thm.~1.3]{VS}. Let $\varphi\in\hold^\infty_c(\R^n,W)$. Then, letting $\A^{*}\coloneqq(-1)^k\sum_{|\alpha|=k}A_{\alpha}^{*}\partial^\alpha$ be the formal adjoint of $\A$, we conclude
		\begin{equation}
			\label{eq_em2igaikae8ceizae8Viuphe}
			\begin{split}
				\int_{\R^n} Eu\cdot \A^*\varphi\dif x=&\int_{H^+}u\cdot \A^*\varphi\dif x+\int_{H^-}U\cdot \A^*\varphi\dif x\\
				=&\int_{H^+}\A u \cdot\varphi\dif x +\int_{H^-} \A U\cdot\varphi\dif x,
			\end{split}
		\end{equation}
		where the boundary terms in the integration by parts vanish since the traces of $D^ju$ and $D^jU$ coincide for $j=0,1,\ldots, k-1$ by construction. As a consequence, $\A Eu \in\lebe^{1}(\R^{n},W)$ with 
		$$
		\A Eu=
		\begin{cases}
			\A u&\text{in }H^+\\
			\A U&\text{in }H^-.
		\end{cases}$$ 
		We then estimate from \eqref{eq_aequo6taa9oohied6waotahN} and \eqref{eq_em2igaikae8ceizae8Viuphe} 
		\begin{equation*}
			\begin{split}
				\|D^{k-1}u\|_{\lebe^{\frac{n}{n-1}}(H^+)}&\leq \|D^{k-1}Eu\|_{\lebe^{\frac{n}{n-1}}(\R^n)}\leq c\|\A Eu\|_{\lebe^1(\R^n)}\\ & =c\left(\|\A u\|_{\lebe^1(H^+)}+\|\A U\|_{\lebe^1(H^-)}\right)\\
				&\leq c\left(\|\A u\|_{\lebe^1(H^+)}+\|\tr_k u\|_{\mathrm T_k(H^-)}\right) \leq c\|\A u\|_{\lebe^1(H^+)},
			\end{split}
		\end{equation*}
		where in the second inequality we used \cite[Thm.~1.3]{VS} and the fact that boundary ellipticity implies cancellation (see Proposition~\ref{prop:bc_ec}); in the last inequality we used Theorem \ref{thm:trace_unified}. The proof of \ref{itm:main_b} is complete.
		
		To prove the converse, first note that necessity of ellipticity for the estimate follows from \cite[Cor. 5.2]{VS}. Therefore, assume that $\A$ is elliptic, but not boundary elliptic.
		We conclude the proof of the main result by showing that the estimate in \ref{itm:main_b} must fail.
		We keep most of the construction in the proof of Lemma~\ref{lem:nec_bdry_ell}, with the only modification that $f_\varepsilon\colon\C\setminus(-\infty,-2\varepsilon]\rightarrow \C$ is given by $f^{(k-1)}_\varepsilon(z)=(z+2\varepsilon)^{\alpha}$, $\alpha=-\frac{2(n-1)}{n}$, where we choose a branch of $z^{-\alpha}$ according to the domain of $f_\varepsilon$. The remaining details are left to the keen reader.
	\end{proof}
	In following the same ideas and using the full space estimates for (weakly) canceling operators in \cite{VS,BVS,R_tams}, we can prove a broader class of estimates:
	\begin{theorem}\label{thm:estimates}
		Let \(n \ge 2\) and $\A$ be a $k$th order differential operator as in \eqref{eq:A}. Suppose that $\A$ is elliptic and boundary elliptic in direction $\nu$. Let $s\in(0,n)$ be such that $s\leq k$, $j=1,2,\ldots,\min\{k,n-1\}$, and $q\in[1,n/(n-j)]$. Then the following estimates hold
		\begin{align*}
			\|u\|_{{\dot{\sobo}}{^{k-s,\frac{n}{n-s}}}(H^+_\nu)}&\leq c\|\A u\|_{\lebe^1(H^+_\nu)}\\
			\||\,\cdot\,|^{n-j-n/q}D^{k-j}u\|_{\lebe^q(H^+_\nu)}&\leq c\|\A u\|_{\lebe^1(H^+_\nu)}\\
			\|D^{k-n}u\|_{\lebe^\infty(H^+_\nu)}&\leq c\|\A u\|_{\lebe^1(H^+_\nu)}\quad\text{when } k\geq n
		\end{align*}
		for $u\in\hold_c^\infty(\R^n,V)$.
	\end{theorem}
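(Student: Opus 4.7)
The plan is to mimic the proof of Theorem~\ref{thm:main}: extend $u \in \hold_c^\infty(\R^n, V)$ across the hyperplane $H_\nu$ so that the full-space endpoint estimates for canceling elliptic operators can be applied to the extension, and then restrict back to $H^+_\nu$. Concretely, given $u$, I would first invoke Theorem~\ref{thm:extension_many_derivatives} applied to $\tr_k u$ to construct $U \in \dot{\sobo}{^{k,1}}(H^-_\nu, V)$ matching all normal derivatives of $u$ up to order $k-1$ on $H_\nu$, with
\begin{equation*}
\|D^k U\|_{\lebe^1(H^-_\nu)} \leq c\|\tr_k u\|_{\mathrm T_k(H_\nu)} \leq c\|\A u\|_{\lebe^1(H^+_\nu)},
\end{equation*}
where the second inequality is Theorem~\ref{thm:trace_unified}. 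Setting $Eu = u$ on $H^+_\nu$ and $Eu = U$ on $H^-_\nu$, the matching of $D^j u$ and $D^j U$ on $H_\nu$ for $j = 0, \dotsc, k - 1$ kills all distributional boundary contributions, exactly as in the argument surrounding \eqref{eq_em2igaikae8ceizae8Viuphe} in the proof of Theorem~\ref{thm:main}, so that $\A Eu \in \lebe^1(\R^n, W)$ with
\begin{equation*}
\|\A Eu\|_{\lebe^1(\R^n)} \leq \|\A u\|_{\lebe^1(H^+_\nu)} + \|\A U\|_{\lebe^1(H^-_\nu)} \leq c \|\A u\|_{\lebe^1(H^+_\nu)}.
\end{equation*}

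The second step is to apply the known full-space endpoint estimates to $Eu$. To invoke these I need $\A$ to be elliptic and canceling; ellipticity is assumed, and Proposition~\ref{prop:bc_ec} upgrades the boundary ellipticity hypothesis to \eqref{eq:canc}. This unlocks, respectively, the fractional Gagliardo--Nirenberg--Sobolev scale of \cite{VS} giving $\|Eu\|_{\dot{\sobo}{^{k-s, n/(n-s)}}(\R^n)} \leq c\|\A Eu\|_{\lebe^1(\R^n)}$ for $s \in (0, n)$ with $s \le k$, the Hardy-type weighted estimates of \cite{BVS,R_tams} giving $\||\,\cdot\,|^{n-j-n/q} D^{k-j} Eu\|_{\lebe^q(\R^n)} \leq c\|\A Eu\|_{\lebe^1(\R^n)}$ for the stated range of $j$ and $q$, and the Morrey-type embedding of \cite{VS} yielding $\|D^{k-n} Eu\|_{\lebe^\infty(\R^n)} \leq c\|\A Eu\|_{\lebe^1(\R^n)}$ when $k \ge n$. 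Restricting each estimate to $H^+_\nu$ only decreases the left-hand side and, combined with the bound on $\|\A Eu\|_{\lebe^1(\R^n)}$ established above, produces the three inequalities.

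The main obstacle is bookkeeping at the citation level: one must verify that the cited full-space results are indeed stated for the entire range of parameters $(s, j, q)$ listed here, under the canceling assumption alone. The Sobolev scale and the $\lebe^\infty$-endpoint are explicitly covered in \cite{VS}; the weighted Hardy--Sobolev estimates are the content of \cite{BVS,R_tams}. The origin-centered weight $|\cdot|^{n-j-n/q}$ presents no compatibility issue upon restriction, since it is the same function on $H^+_\nu$ as on $\R^n$. Once this bookkeeping is in place, the proof is a direct transcription of the argument given for Theorem~\ref{thm:main}, with the Gagliardo--Nirenberg--Sobolev endpoint replaced by the appropriate stronger full-space estimate.
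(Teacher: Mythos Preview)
Your proposal is correct and matches the paper's approach: the paper does not give a detailed proof of this theorem but simply states that it follows ``in following the same ideas'' as the proof of Theorem~\ref{thm:main}, combined with the full-space estimates for (weakly) canceling operators from \cite{VS,BVS,R_tams}. The only additional observation the paper makes is that the $\lebe^\infty$-estimate can alternatively be obtained directly from the Smith representation formula (Theorem~\ref{thm:smith}) without invoking ellipticity or the extension machinery, since $D^{k-n}u$ is the convolution of a bounded kernel with $\A u$.
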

	Here we make the convention that the fractional scale ${\dot{\sobo}}{^{s,p}}$ is completed with the classical Sobolev spaces when $s$ is a positive integer and with the Lebesgue spaces when $s=0$.
	\begin{remark}
		The $\lebe^\infty$-estimate in Theorem \ref{thm:estimates} can be proved independently of the machinery used to prove Theorem \ref{thm:main}. One can simply use the representation formula of Theorem \ref{thm:smith} to note that $D^{k-n}u$ can be represented by the convolution of a bounded kernel with $\A u$. In particular, the $\lebe^\infty$-estimate is true in the absence of ellipticity, which is necessary for the higher order estimate of Theorem \ref{thm:main}.
	\end{remark}
	\section{Boundary ellipticity, trace operators, and examples}\label{sec:BVA}
	In this section we classify the boundary ellipticity among related conditions that lead to trace or Sobolev-type inequalities on domains or the entire space, respectively, display the consequences for spaces of functions defined in terms of the differential operators $\A$ and discuss several examples. 
	
	Based on Proposition~\ref{prop:bc_ec}, we first obtain the following implications for $\nu\in\mathbb{S}^{n-1}$ and operators $\A$ of the form \eqref{eq:A}:
	\begin{align}\label{eq:implicationchain}
		{\text{$\A$ is $\mathbb{C}$-elliptic $\Longrightarrow$ $\A$ is boundary elliptic in direction $\nu$ $\Longrightarrow$  $\A$ is canceling.}}
	\end{align}
	To connect the consequences of the preceding chain of implications with previously known results, we first restate the inequalities of the preceding sections in the language of trace operators. From a function space perspective and in order to provide a unifying framework for problems arising, e.g., in elasticity or plasticity \cite{FuchsSeregin,StrangTemam}, it is convenient to put for an open set $\Omega\subset\R^{n}$
	\begin{align}\label{eq:funcspacesDef}
		\begin{split}
			\sobo^{\A,1}(\Omega)\coloneqq\{u\in\sobo^{k-1,1}(\Omega,V)\colon\;\A u\in\lebe^{1}(\Omega,W)\},\\
			\bv^{\A}(\Omega)\coloneqq\{u\in\sobo^{k-1,1}(\Omega,V)\colon\;\A u\in\mathcal{M}(\Omega,W)\}, 
		\end{split}
	\end{align}
	where $\mathcal{M}(\Omega,W)$ denotes the $W$-valued Radon measures $\mu$ with finite total variation $\|\mu\|_{\mathcal{M}(\Omega)}$ on $\Omega$. We define the corresponding norms or metrics on $\sobo^{\A,1}$ or $\bv^{\A}$, respectively, by 
	\begin{align*}
		&\|u\|_{\sobo^{\A,1}(\Omega)}:=\|u\|_{\sobo^{k-1,1}(\Omega)}+\|\A u\|_{\lebe^{1}(\Omega)}&\text{for}\;u\in\sobo^{\A,1}(\Omega),\\ 
		&\|u\|_{\bv^{\A}(\Omega)}:=\|u\|_{\sobo^{k-1,1}(\Omega)}+\|\A u\|_{\mathcal{M}(\Omega)}&\text{for}\;u\in\bv^{\A}(\Omega),\\ 
		&d_{\A}(u,v):=\|u-v\|_{\sobo^{k-1,1}(\Omega)}+|\,\|\A u\|_{\mathcal{M}(\Omega)}-\|\A v\|_{\mathcal{M}(\Omega)}|&\text{for}\;u,v\in\bv^{\A}(\Omega),
	\end{align*}
	and note that approximation by smooth functions of $u\in\bv^{\A}(\Omega)$ can only be expected with respect to $d_{\A}$ but not $\|\cdot\|_{\bv^{\A}}$ (see e.g., \cite[Sec.~2.4]{BDG} and \cite[Sec.~2.3]{RS}). Because of this circumstance, we give the detailled proof of  
	\begin{corollary}[Refined trace theorem]\label{cor:BVAtrace}
		Let $\A$ as in \eqref{eq:A} be a $k$th order operator that is boundary elliptic in direction $\nu\in\mathbb{S}^{n-1}$. Then there exists a \emph{surjective, linear trace operator} $\mathrm{tr}_{k}\colon\bv^{\A}(H_{\nu}^{+})\to \mathrm{T}_{k}(H_{\nu},V)$ which is \emph{continuous with respect to $d_{\A}$}.
		More precisely, there exists $c=c(\A,\nu)>0$ such that the estimate
		\begin{align}\label{eq:traceestBVA}
			\|\tr_k (u)\|_{\mathrm T_k(H_\nu)}\leq c\|\A u\|_{\mathcal M(H_\nu^+)}
		\end{align}
		holds for all $u\in\bv^\A(H^+_\nu)$.
	\end{corollary}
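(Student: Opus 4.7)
The plan is to deduce the $\bv^\A$-statement from the smooth estimate of Theorem~\ref{thm:trace_unified} via a density argument, together with surjectivity from the extension Theorem~\ref{thm:extension_many_derivatives}. First I would transfer the estimate from $\hold_c^\infty(\R^n,V)$ to the larger class $\hold^\infty(\overline{H_\nu^+},V)\cap\sobo^{\A,1}(H_\nu^+)$ by a cutoff: given such $u$, extend smoothly to $\R^n$ via a Seeley-type reflection, multiply by cutoffs $\psi_R\in\hold_c^\infty(\R^n)$ with $\psi_R\equiv 1$ on $B_R$ and $\|D^j\psi_R\|_\infty\leq cR^{-j}$, and apply Theorem~\ref{thm:trace_unified} to $\psi_R u$. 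The commutator bound
\begin{equation*}
\|\A(\psi_R u)-\psi_R\A u\|_{\lebe^1(H_\nu^+)}\leq c\sum_{j=0}^{k-1}R^{-(k-j)}\|D^j u\|_{\lebe^1(B_{2R}\setminus B_R)}
\end{equation*}
vanishes as $R\to\infty$ since $u\in\sobo^{k-1,1}$, whereas the traces converge pointwise, giving the estimate on the enlarged class.

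Second, I would approximate $u\in\bv^\A(H_\nu^+)$ via a shift-and-mollify scheme: for $\varepsilon>0$ define $u_\varepsilon(x)\coloneqq u(x+\varepsilon\nu)$, which is well-defined on a neighbourhood of $\overline{H_\nu^+}$, and mollify at scale $\delta<\varepsilon/2$ to obtain $u_{\varepsilon,\delta}\in\hold^\infty(\overline{H_\nu^+},V)\cap\sobo^{\A,1}(H_\nu^+)$ with $u_{\varepsilon,\delta}\to u$ in $\sobo^{k-1,1}(H_\nu^+)$ and $\|\A u_{\varepsilon,\delta}\|_{\lebe^1(H_\nu^+)}\leq\|\A u\|_{\mathcal{M}(H_\nu^+)}$ along a diagonal subsequence, in accordance with the Anzellotti--Giaquinta-type procedure of \cite[Sec.~2.4]{BDG} and \cite[Sec.~2.3]{RS}. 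The first step then yields the uniform bound $\|\tr_k u_{\varepsilon,\delta}\|_{\mathrm T_k(H_\nu)}\leq c\|\A u\|_{\mathcal{M}(H_\nu^+)}$.

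The hard part will be the passage to the limit: $d_\A$-convergence yields only area-strict, not $\lebe^1$-convergence of $\A u_{\varepsilon,\delta}$, so the traces $\tr_k u_{\varepsilon,\delta}$ need not be Cauchy in $\mathrm T_k(H_\nu)$. I would instead extract weak-$*$ subsequential limits of each component $\partial_\nu^j u_{\varepsilon,\delta}|_{H_\nu}$, in ${\dot{\besov}}{^{k-1-j}_{1,1}}(H_\nu,V)$ for $j\leq k-2$ and in $\mathcal{M}(H_\nu,V)$ for $j=k-1$, exploiting the uniform bounds together with the dual characterisations of these spaces. Uniqueness of the limit is forced by distributional testing: the $\sobo^{k-1,1}$-convergence identifies the lower-order limits with the Sobolev traces of $u$, whereas the top normal derivative is pinned down via the Gauss--Green identity arising from integration by parts against $\A^*\varphi$ for $\varphi\in\hold_c^\infty(\R^n,W)$. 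The uniform $\lebe^1$-bound then forces this limit measure to be absolutely continuous with respect to $\mathcal{H}^{n-1}$, hence to define an element of $\lebe^1(H_\nu,V)$, and lower semicontinuity of the norms produces \eqref{eq:traceestBVA}.

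Finally, surjectivity follows directly from Theorem~\ref{thm:extension_many_derivatives}: by the symmetry between $H_\nu^\pm$, any datum $(g_0,\dotsc,g_{k-1})\in\mathrm T_k(H_\nu,V)$ is the trace of some $U\in\dot{\sobo}{^{k,1}}(H_\nu^+,V)\subset\bv^\A(H_\nu^+)$. Linearity of $\tr_k$ is inherited from the smooth case via uniqueness of the weak-$*$ limit, and $d_\A$-continuity follows by applying \eqref{eq:traceestBVA} to appropriately chosen smooth approximants of differences $u-v$.
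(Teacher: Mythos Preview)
Your first two steps are essentially correct and align with the paper. The gap is in the third step, where you conclude that $d_{\A}$-convergence cannot yield Cauchyness of the traces and resort to weak-$*$ compactness. This creates two problems. First, $\dot{\besov}{^{s}_{1,1}}(H_\nu)$ is not a dual space, so bounded sequences do not admit weak-$*$ convergent subsequences there; you would need a separate argument to place the distributional limit back in the Besov space with the correct norm bound. Second, and more seriously, for the top normal derivative $j=k-1$ the weak-$*$ limit in $\mathcal{M}(H_\nu,V)$ of an $\lebe^1$-bounded sequence need not be absolutely continuous: the claim that ``the uniform $\lebe^1$-bound then forces this limit measure to be absolutely continuous'' is simply false (any approximation to the identity is a counterexample). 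Identifying the limit via a Gauss--Green identity determines it only as a distribution, not its regularity, so you cannot conclude that the trace lands in $\lebe^1(H_\nu,V)$ this way.

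The paper's resolution is that the traces \emph{are} Cauchy in $\mathrm{T}_k(H_\nu,V)$, contrary to your intuition. The device you are missing is a \emph{boundary} cutoff: take $\varphi_r\in\hold^\infty(H_\nu^+;[0,1])$ with $\varphi_r\equiv 1$ on $\{0<\dist(\cdot,H_\nu)<r\}$, supported in the strip $S_r=\{\dist(\cdot,H_\nu)\leq 2r\}$, and $|\nabla^l\varphi_r|\leq cr^{-l}$. Since the trace depends only on boundary values, $\tr_k(v_i-v_j)=\tr_k(\varphi_r(v_i-v_j))$, and applying the $\sobo^{\A,1}$-estimate together with the Leibniz rule gives
\begin{equation*}
\|\tr_k(v_i-v_j)\|_{\mathrm{T}_k(H_\nu)}\leq c\sum_{m=0}^{k-1}r^{-(k-m)}\|D^m(v_i-v_j)\|_{\lebe^1(H_\nu^+)}+c\bigl(|\A v_i|(S_r)+|\A v_j|(S_r)\bigr).
\end{equation*}
Sending $i,j\to\infty$, the first sum vanishes by the $\sobo^{k-1,1}$-convergence built into $d_{\A}$, while the second is controlled (via upper semicontinuity under strict convergence on the closed set $S_r$) by $2|\A u|(S_r)$, which tends to $0$ as $r\searrow 0$. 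This yields Cauchyness directly, hence a well-defined, linear, $d_{\A}$-continuous trace satisfying \eqref{eq:traceestBVA}, with no weak-$*$ extraction needed.
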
 
	\begin{proof}
		Given $u\in\sobo^{\A,1}(H_{\nu}^{+})$, we may follow \cite[\S 5.3.3]{Evans} and consider for $\varepsilon>0$ the maps $u^{\varepsilon}(x):=u(x+\varepsilon\nu)$ for $x\in H_{\nu}^{+}$. Passing to the mollifications $u_{\varepsilon}:=\rho_{\varepsilon/2}*u^{\varepsilon}$ with the $\varepsilon$-rescaled variant of a standard mollifier $\rho$ then yields that $u_{\varepsilon}\in\hold^{\infty}(\overline{H_{\nu}^{+}},V)$ and $\|u_{\varepsilon}-u\|_{\sobo^{\A,1}(H_{\nu}^{+})}\to 0$ as $\varepsilon\searrow 0$. Hence $\hold^{\infty}(\overline{H_{\nu}^{+}},V)\cap\sobo^{\A,1}(H_{\nu}^{+})$ is dense in $\sobo^{\A,1}(H_{\nu}^{+})$ for the norm topology. On the other hand, whenever $\eta_{R}\in\hold_{c}^{\infty}(\R^{n};[0,1])$ satisfies $\mathbbm{1}_{\ball_{R}(0)}\leq \eta_{R}\leq\mathbbm{1}_{\ball_{2R}(0)}$ together with 
		\begin{align*}
			|\nabla^{l}\eta_{R}|\leq \frac{c}{R^{l}}\qquad\text{for all}\;l\in\{0,...,k\}, 
		\end{align*}
		then $\eta_{R}u\to u$ as $R\to\infty$ for the norm topology on $\sobo^{\A,1}(H_{\nu}^{+})$. Combining both statements yields that $\hold_{c}^{\infty}(\overline{H_{\nu}^{+}},V)$ is dense in $\sobo^{\A,1}(H_{\nu}^{+})$ for the norm topology.  
		
		For $u\in\sobo^{\A,1}(H_{\nu}^{+})$, we may thus pick a sequence $(u_{j})\subset\hold_{c}^{\infty}(\overline{H_{\nu}^{+}},V)$ such that $u_{j}\to u$ for the $\sobo^{\A,1}(H_{\nu}^{+})$-norm. Theorem~\ref{thm:trace_unified} then implies that $(\mathrm{tr}_{k}u_{j})$ is Cauchy in $\mathrm{T}_{k}(H_{\nu},V)$ and, for $\mathrm{T}_{k}(H_{\nu},V)$ is Banach, converges to some element $\mathrm{tr}_{k}u\in\mathrm{T}_{k}(H_{\nu},V)$. By a routine argument, one sees that this element $\mathrm{tr}_{k}u$ is independent of the approximating sequence and thus well-defined. This defines a linear and bounded trace operator $\mathrm{tr}_{k}\colon\sobo^{\A,1}(H_{\nu}^{+})\to\mathrm{T}_{k}(H_{\nu},V)$, and this operator satisfies~\eqref{eq:traceestBVA} in light of~Theorem~\ref{thm:trace_unified}.
		
		For $u\in\bv^{\A}(H_{\nu}^{+})$, we choose a sequence $(v_{j})\subset \hold^{\infty}(H_{\nu}^{+},V)\cap\bv^{\A}(H_{\nu}^{+})\subset\sobo^{\A,1}(H_{\nu}^{+})$ such that $d_{\A}(v_{j},u)\to 0$ as $j\to\infty$, see \cite[Sec. 2.4]{BDG} or \cite[Sec. 2.3]{RS}. Let $r>0$ and pick a cut-off function $\varphi_{r}\in\hold^{\infty}(H_{\nu}^{+};[0,1])$ with $\varphi_{r}(x)=1$ for $x\in H_{\nu}^{+}$ with $\mathrm{dist}(x,H_{\nu})<r$, $\varphi_{r}(x)=0$ for $x\in H_{\nu}^{+}$ with $x\in H_{\nu}^{+}\setminus S_{r}$, where $S_{r}:=\{x\in H_{\nu}^{+}\colon\;\mathrm{dist}(x,H_{\nu})\leq 2r\}$ and 
		\begin{align}\label{eq:gradcondecay}		
			|\nabla^{l}\varphi_{r}|\leq\frac{c}{r^{l}}\qquad\text{for}\; l\in\{0,...,k\}.
		\end{align}
		By the construction of $\mathrm{tr}_{k}\colon\sobo^{\A,1}(H_{\nu}^{+})\to\mathrm{T}_{k}(H_{\nu},V)$, we have $\mathrm{tr}_{k}(v_{j})=\mathrm{tr}_{k}(\varphi_{r}v_{j})$ for all $j\in\mathbb{N}$ and all $r>0$. Using~\eqref{eq:traceestBVA} for $\varphi_{r}(v_{i}-v_{k})\in\sobo^{\A,1}(H_{\nu}^{+})$ as established above in the second step, we obtain for all $i,j\in\mathbb{N}$ by the Leibniz rule 
		\begin{align}\label{eq:BVAlimit}
			\begin{split}
				\|\mathrm{tr}_{k}(v_{i}-v_{j})\|_{\mathrm{T}_{k}(H_{\nu})} & = \|\mathrm{tr}_{k}(\varphi_{r}(v_{i}-v_{j}))\|_{\mathrm{T}_{k}(H_{\nu})} \\ 
				& \!\!\!\!\!\!\!\!\!\!\!\!\!\!\!\!\!\!\!\!\!\!\!\!\!\!\stackrel{\eqref{eq:gradcondecay}}{\leq} c\|\A(\varphi_{r}(v_{i}-v_{j}))\|_{\lebe^{1}(H_{\nu}^{+})}\\ 
				& \!\!\!\!\!\!\!\!\!\!\!\!\!\!\!\!\!\!\!\!\!\!\!\!\leq c\Big(\sum_{m=0}^{k-1}\frac{1}{r^{k-m}}\|D^{m}(v_{i}-v_{j})\|_{\lebe^{1}(H_{\nu}^{+})} \Big) + c(|\A v_{i}|(S_{r})+|\A v_{j}|(S_{r}))
			\end{split}
		\end{align}
		First letting $i,j\to\infty$ and then sending $r\searrow 0$, we see that $(\mathrm{tr}_{k}(v_{j}))$ is Cauchy in $\mathrm{T}_{k}(H_{\nu},V)$ and thus converges to some element of $\mathrm{T}_{k}(H_{\nu},V)$. By an argument similar to~\eqref{eq:BVAlimit}, one equally finds that this element is independent of the approximating sequence $(v_{j})$, so is well-defined, and depends linearly on $u$; note that, even though $d_{\A}$ is not translation invariant, the linearity can be obtained by a similar argument as invoked in \eqref{eq:BVAlimit}. This defines the requisite trace operator $\mathrm{tr}_{k}\colon\bv^{\A}(H_{\nu}^{+})\to\mathrm{T}_{k}(H_{\nu},V)$ which, by construction is continuous for $d_{\A}$. By construction, it coincides with the trace operator $\sobo^{k,1}(H_{\nu}^{+},V)\to\mathrm{T}_{k}(H_{\nu},V)$ on $\sobo^{k,1}(H_{\nu}^{+},V)$-maps, and hence its surjectivity follows from Theorem~\ref{thm:extension_many_derivatives}. The proof is complete. 
	\end{proof}
	
	\begin{corollary}[Refined Sobolev-type inequalities]\label{cor:SobBVA}
		Let $\A$ as in \eqref{eq:A} be a $k$th order elliptic operator that is boundary elliptic in direction $\nu$.
		Then there exists $c=c(\A,\nu)>0$ and, for any $0<s<1$, a constant $c_{s}=c(\A,\nu,s)>0$ such that the Sobolev-type estimates
		\begin{align}\label{eq:soboBVA}
			\begin{split}
				\|D^{k-1}u\|_{\lebe^{\frac{n}{n-1}}(H^+_\nu)}\leq c\|\A u\|_{\mathcal M(H^+_\nu)},\\ 
				\|D^{k-1}u\|_{\dot{\sobo}{^{s,\frac{n}{n-1+s}}}(H^{+}_{\nu})}\leq c_{s}\|\A u\|_{\mathcal{M}(H^{+}_{\nu})}
			\end{split}
		\end{align}
		hold for all $u\in\bv^\A(H^+_\nu)$.
	\end{corollary}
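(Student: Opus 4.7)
The plan is to combine the smooth-function estimates from Theorem~\ref{thm:main} and Theorem~\ref{thm:estimates} with the density and approximation machinery already developed in the proof of Corollary~\ref{cor:BVAtrace}, and to pass to the limit via Fatou's lemma.

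First I would derive both estimates for $u \in \hold_c^\infty(\R^n, V)$. The first inequality in~\eqref{eq:soboBVA} is immediate from Theorem~\ref{thm:main}, since $\|\A u\|_{\mathcal{M}(H^+_\nu)} = \|\A u\|_{\lebe^1(H^+_\nu)}$ whenever $u$ is smooth. For the second, I would apply Theorem~\ref{thm:estimates} with parameter $\tau := 1-s \in (0,1)$, which yields
\begin{equation*}
\|u\|_{{\dot{\sobo}}{^{k-1+s,\frac{n}{n-1+s}}}(H^+_\nu)} \leq c_s \|\A u\|_{\lebe^1(H^+_\nu)};
\end{equation*}
for $s\in(0,1)$ the left-hand side is equivalent, via standard identification of Sobolev--Slobodecki\u{\i} norms, to $\|D^{k-1}u\|_{{\dot{\sobo}}{^{s,\frac{n}{n-1+s}}}(H^+_\nu)}$.

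Next I would extend both estimates to $u \in \sobo^{\A,1}(H^+_\nu)$ by density. The proof of Corollary~\ref{cor:BVAtrace} shows that $\hold_c^\infty(\overline{H^+_\nu},V)$ is dense in $\sobo^{\A,1}(H^+_\nu)$ for the norm topology, and after a standard smooth extension the approximating sequences may be taken in $\hold_c^\infty(\R^n, V)$. Given such an approximation $w_n \to u$, a subsequence satisfies $D^{k-1}w_n \to D^{k-1}u$ pointwise a.e.\ on $H^+_\nu$. Since the left-hand sides of both estimates are integrals of non-negative integrands that are pointwise lower semicontinuous under such convergence (the Lebesgue integrand and the Gagliardo double integrand representing the Sobolev--Slobodecki\u{\i} seminorm), Fatou's lemma transfers the estimates from $(w_n)$ to $u$ with $\|\A u\|_{\lebe^1(H^+_\nu)}$ on the right-hand side.

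Finally, for general $u \in \bv^\A(H^+_\nu)$, I would invoke the $d_\A$-density of $\hold^\infty(H^+_\nu,V) \cap \bv^\A(H^+_\nu) \subset \sobo^{\A,1}(H^+_\nu)$ in $\bv^\A(H^+_\nu)$, as already used in Corollary~\ref{cor:BVAtrace}. Picking $(v_j)$ in this class with $d_\A(v_j, u) \to 0$ gives $v_j \to u$ in $\sobo^{k-1,1}(H^+_\nu)$ together with $\|\A v_j\|_{\mathcal{M}(H^+_\nu)} = \|\A v_j\|_{\lebe^1(H^+_\nu)} \to \|\A u\|_{\mathcal{M}(H^+_\nu)}$. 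A further subsequence delivers pointwise a.e.\ convergence of $D^{k-1}v_j$, and one more application of Fatou's lemma yields both estimates in~\eqref{eq:soboBVA} for $u$. I do not expect any serious obstacle: the only technical point is the lower semicontinuity of the Sobolev--Slobodecki\u{\i} seminorm under pointwise a.e.\ convergence, which is transparent from its Gagliardo integral representation.
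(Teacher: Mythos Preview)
Your proposal is correct and follows essentially the same approach as the paper: approximate by smooth compactly supported functions (for which the estimates hold by Theorem~\ref{thm:main} and Theorem~\ref{thm:estimates}), extract a subsequence with $D^{k-1}w_j \to D^{k-1}u$ pointwise a.e., and conclude via Fatou's lemma. The only cosmetic difference is that the paper performs the approximation in a single step, constructing directly a sequence $(w_j)\subset\hold_c^\infty(\overline{H^+_\nu},V)$ with $d_\A(w_j,u)\to 0$ by diagonalising the two density results, whereas you pass through $\sobo^{\A,1}(H^+_\nu)$ as an intermediate stage; the substance is identical.
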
 
	\begin{proof}
		Let $u\in\bv^{\A}(H_{\nu}^{+})$ and pick a sequence $(w_{j})\subset\hold_{c}^{\infty}(\overline{H_{\nu}^{+}},V)$ that converges to $u$ with respect to $d_{\A}$; this can be achieved by taking a sequence $(v_{j})\subset\sobo^{\A,1}(H_{\nu}^{+})$ such that $d_{\A}(v_{j},u)<\frac{1}{j}$ and then choosing, for each $j\in\mathbb{N}$, some $w_{j}\in\hold_{c}^{\infty}(\overline{H_{\nu}^{+}},V)$ such that $\|w_{j}-v_{j}\|_{\sobo^{\A,1}(H_{\nu}^{+})}<\frac{1}{j}$ as in the very first part of the previous proof. Passing to a subsequence if necessary, we may achieve $D^{k-1}w_{j}\to D^{k-1}u$ $\mathscr{L}^{n}$-a.e. in $H_{\nu}^{+}$; then~\eqref{eq:soboBVA} is a direct consequence of Fatou's lemma, Theorem~\ref{thm:estimates} and $d_{\A}(w_{j},u)\to 0$ as $j\to\infty$. 
	\end{proof} 
	We now turn to some examples that demonstrate the richness and the limitations of the boundary ellipticity. 
	\begin{example}[$\mathbb{C}$-elliptic operators]
		Based on \eqref{eq:implicationchain}, all $\mathbb{C}$-elliptic operators are boundary elliptic in any direction $\nu\in\mathbb{S}^{n-1}$. This particularly comprises the \emph{symmetric gradient}
		\begin{align}\label{eq:symgrad}
			\varepsilon(u)\coloneqq\frac{1}{2}(Du+Du^{\top}),\qquad u=(u_{1},...,u_{n})\colon\R^{n}\to\R^{n}
		\end{align}
		for $n\geq 2$ and, denoting by $\mathbbm{1}_{n\times n}$ the $(n\times n)$-unit matrix, the \emph{trace-free symmetric} or \emph{deviatoric symmetric gradient}
		\begin{align}\label{eq:tfsym}
			\varepsilon^{D}(u)\coloneqq\varepsilon(u)-\frac{\mathrm{div}(u)}{n}\mathbbm{1}_{n\times n},\qquad u=(u_{1},...,u_{n})\colon\R^{n}\to\R^{n}
		\end{align}
		in $n\geq 3$ dimensions (see, e.g., \cite[Sec.~2]{BDG}). For the symmetric gradient, Corollary~\ref{cor:BVAtrace} directly yields the halfspace version of the $\bd$-trace theorem due to Strang--Temam \cite{StrangTemam} (also see Babadjian \cite{Babadjian}); more generally, for halfspaces Corollary~\ref{cor:BVAtrace} lets us retrieve the trace theorems for $\mathbb{C}$-elliptic operators  from \cite{BDG,DieGme21,GRVS} as special cases by \eqref{eq:implicationchain}. 
	\end{example}
	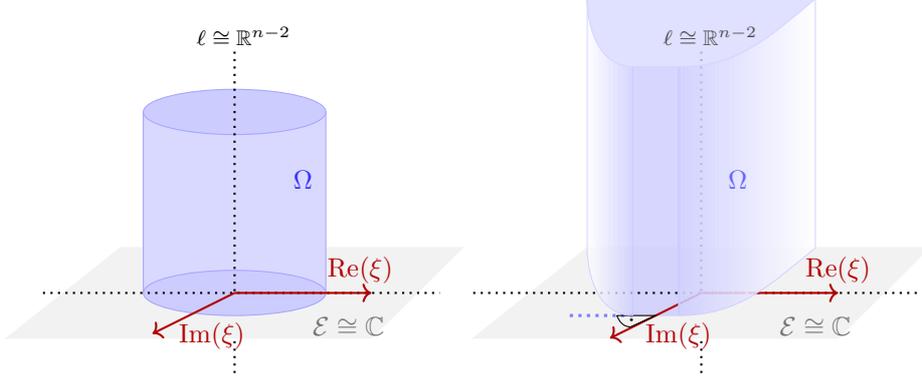
\begin{figure}
		\begin{tikzpicture}[scale=0.6]
			\draw[-,left color=white,right color=blue!50, shade, white] (-2,0) to [bend left=20] (0,-0.5) to [bend left=20] (2,0) to (-2,0);
			\draw[color=black!5!white, fill=black!5!white] (-2.5,1) -- (-5,-1)--(3,-1)--(5,1)--(-2.5,1);
			\draw[blue!35!white, fill=blue!15!white] (-2,0) -- (2,0) -- (2,4) -- (-2,4) -- (-2,0);
			\draw[blue!35!white, fill=blue!20!white] (0,0) ellipse (2cm and 0.5cm);
			\draw[blue!35!white, fill=blue!20!white] (0,4) ellipse (2cm and 0.5cm);
			\draw[thick,dotted] (0,5.33333) -- (0,0);
			\draw[thick,dotted] (-0,-1.0666)--(0,-1.77777);
			\draw[thick,dotted] (-4.2,0) -- (4.5,0);
			\draw[color=red!70!black,thick,->] (0,0) -- (3,0);
			\draw[color=red!70!black,thick,->] (0,0) -- (-1.8,-0.9);
			\node[color=red!70!black] at (2.75,0.5) {$\Re(\xi)$};
			\node[color=red!70!black] at (-0.5,-0.95) {$\Im(\xi)$};
			\node[color=black!55!white] at (2.5,-0.75) {$\mathcal{E}\cong\mathbb{C}$};
			\node[color=white!20!blue] at (1.5,2.5) {$\Omega$};
			\node at (0.2,5.666) {\footnotesize{$\ell\cong\R^{n-2}$}};
		\end{tikzpicture}
		\begin{tikzpicture}[scale=0.6]
			\draw[color=black!5!white, fill=black!5!white] (-2.5,1) -- (-5,-1)--(3,-1)--(5,1)--(-2.5,1);
			\draw[blue!50!white,dotted,very thick] (-1.5,-0.5)--(-3,-0.5);
			\draw[blue!20!white, fill=blue!20!white,opacity=0.8] (-1.5,-0.5)--(-0.5,-0.5) -- (-0.5,5) -- (-1.5,5) -- (-1.5,-0.5);
			\draw[thick,dotted] (0,5.33333) -- (0,0);
			\draw[thick,dotted] (-0,-1.0666)--(0,-1.77777);
			\draw[thick,dotted] (-5,0) -- (5,0);
			\draw[color=red!70!black,thick,->] (0,0) -- (3,0);
			\draw[color=red!70!black,thick,->] (0,0) -- (-2,-1);
			\node[color=red!70!black] at (3,0.5) {$\Re(\xi)$};
			\node[color=red!70!black] at (-0.5,-0.95) {$\Im(\xi)$};
			\node[color=black!55!white] at (2.5,-0.75) {$\mathcal{E}\cong\mathbb{C}$};
			\draw[black] (-1.85,-0.5) to [out = -70, in = 180] (-1.55,-0.75) -- (-1,-0.5) -- (-1.85,-0.5);
			\node at (-1.525,-0.615) {$\bf{\cdot}$};
			\draw[left color=blue!20,right color=white, shade, blue!20, opacity=0.8] (-0.5,-0.5) to [out=0, in =220] (2.5,1) -- (2.5,6.5) to [out=220, in =0] (-0.5,5) -- (-0.5,-0.5);
			\draw[left color=white,right color=blue!20, shade, blue!20,opacity=0.8] (-1.5,-0.5) to [out=180, in =270] (-2.5,1) -- (-2.5,6.5) to [out=270, in =180] (-1.5,5) -- (-1.5,-0.5);
			\draw[blue!15!white, fill = blue!15!white,opacity=0.8] (-2.5,6.5) to [out=270, in =180] (-1.5,5) -- (-0.5,5) to [out=0, in =220] (2.5,6.5) to (-2.5,6.5) ; 
			\node[black!70!white] at (0.2,5.666) {\footnotesize{$\ell\cong\R^{n-2}$}};
			\node[color=white!40!blue] at (0.8,2.5) {$\Omega$};
		\end{tikzpicture}
		\caption{Shifting holomorphic maps along $\R^{n-2}$. To construct domains for which there is \emph{no} trace operator $\bv^{\A}(\Omega)\to\mathrm{T}_{k}(\partial\Omega,V)$ in absence of $\mathbb{C}$-ellipticity, one picks $\xi\in\mathbb{C}^{n}\setminus\{0\}$ and $v\in (V+\mathrm{i}V)\setminus\{0\}$ such that $\A(\xi)v=0$. For suitable holomorphic functions $f\colon \C\supset\mathbb{D}\to\mathbb{C}$ (e.g. with $f^{(k-1)}(z)=\frac{1}{z-1}$ and the complex disk $\mathbb{D}$), either the real or the imaginary part of $u(x):=f(x\cdot\mathrm{Re}(\xi)+\imag x\cdot\mathrm{Im}(\xi))v$ violate the trace estimate over a set $\Omega$ that up to a rotation coincides with $\{t_{1}\mathrm{Re}(\xi)+t_{2}\mathrm{Im}(\xi)+(0,z'')\colon\;t_{1}^{2}+t_{2}^{2}<1,\;z''\in\R^{n-2}\}$ (figure to the left); see \cite[Thm.~4.18]{BDG}, \cite{GRVS}. In the same way, one can come up with domains that violate the the trace estimate for non-boundary-elliptic operators by use of Lemma~\ref{lem:nec_bdry_ell} (figure to the right). Including a straight piece orthogonal to $\mathrm{Im}(\xi)$, one sees the necessity of the boundary ellipticity even more directly.}\label{fig:holshift}
	\end{figure}
	\begin{example}[The trace-free symmetric gradient in $n=2$ dimensions]
		If $n=2$, then $\mathbb{C}$-ellipticity coincides with cancellation for first order elliptic operators, see \cite{GR}. The trace-free symmetric gradient \eqref{eq:tfsym} is known to be non-canceling for $n=2$ and therefore, in light of \eqref{eq:implicationchain}, cannot be boundary elliptic in direction $\nu$ for \emph{any} $\nu\in\mathbb{S}^{1}$. We may then explicitly verify that for no halfspace $H_{\nu}^{+}\subset\R^{2}$ the operator $\varepsilon^{D}$ admits the trace or Sobolev estimates from Corollary~\ref{cor:BVAtrace} and~\ref{cor:SobBVA}: Put 
		\begin{align*}
			f(x_{1},x_{2})\coloneqq\Big(\frac{x_{1}}{x_{1}^{2}+x_{2}^{2}},-\frac{x_{2}}{x_{1}^{2}+x_{2}^{2}} \Big),\qquad (x_{1},x_{2})\in H_{\nu}^{+}.
		\end{align*}
		An explicit computation directly verifies that $\varepsilon^{D}(f)(x_{1},x_{2})=0\in\R^{2\times 2}$ for all $(x_{1},x_{2})\in H_{\nu}^{+}$ regardless of $\nu=(\nu_{1},\nu_{2})\in\mathbb{S}^{1}$. Whenever $\eta_{R}\in\hold_{c}^{\infty}(\R^{2};[0,1])$ is a cut-off function with $\mathbbm{1}_{\ball_{R}(0)}\leq \eta_{R}\leq \mathbbm{1}_{\ball_{2R}(0)}$ and $|\nabla\eta_{R}|\leq \frac{2}{R}$ for $R>0$, then $u_{R}:=\eta_{R}f\in\bv^{\A}(H_{\nu}^{+})$. By construction, we obtain that $\sup_{R>0}\|\varepsilon^{D}(u_{R})\|_{\mathcal{M}(H_{\nu}^{+})}<\infty$, but parametrising $H_{\nu}=\R\nu^{\bot}$ with $\nu^{\bot}=(-\nu_{2},\nu_{1})$, we then obtain 
		\begin{align*}
			\int_{H_{\nu}}|u_{R}(x_{1},x_{2})|\dif\mathscr{H}^{1}(x_{1},x_{2}) \geq \int_{-R}^{R}\frac{\dif t}{|t|} = \infty
		\end{align*}
		which is in line with Corollary~\ref{cor:BVAtrace}. Similarly, one obtains with a constant $c>0$
		\begin{align*}
			\int_{H_{\nu}^{+}}|u_{R}(x)|^{2}\dif x\geq \int_{H_{\nu}^{+}\cap\ball_{R}(0)}\frac{\dif\,(x_{1},x_{2})}{x_{1}^{2}+x_{2}^{2}} \geq c\int_{0}^{R}\frac{\dif r}{r} = \infty,  
		\end{align*}
		which is in line with Corollary~\ref{cor:SobBVA}. 
	\end{example}
	Even though our main focus of the present paper is on halfspaces, let us note that the failure of boundary ellipticity of $\A$ in a certain direction can immediately be used to construct a domain $\Omega\subset\R^{n}$ for which there is no boundary trace operator $\bv^{\A}(\Omega)\to\mathrm{T}_{k}(\partial\Omega,V)$ (see Figure~\ref{fig:holshift}) with the obvious definition of the latter space via local charts. However, as it is more restrictive for an operator to not be boundary elliptic in a certain direction than to not be $\mathbb{C}$-elliptic in general (see the next example), a modification of the argument sketched in Figure~\ref{fig:holshift} directly yields that the existence of a trace operator $\bv^{\A}(\Omega)\to\mathrm{T}_{k}(\partial\Omega,V)$ forces the outward unit normals $\nu_{\partial\Omega}$ to belong to some set $K\subset\mathbb{S}^{n-1}$ depending on $\A$. While this technical point will be pursued in future work, we conclude the present paper by giving examples of operators that, in view of \eqref{eq:implicationchain}, fail to be $\mathbb{C}$-elliptic, yet are boundary elliptic in certain directions and thus admit Sobolev estimates on the corresponding halfspaces:
	\begin{example} 
		Let $n\geq 3$, $N\geq 3$, $V=\R^{N}$, $W=\R^{((N-1)n-1)\times 2}$ and consider the differential operator $\A$ acting on $u=(u_{1},...,u_{N})\colon\R^{n}\to\R^{N}$
		\begin{align}\label{eq:directional}
			\renewcommand\arraystretch{1.3}
			\A u = \left[
			\begin{array}{c|c}
				\partial_{1}u_{1}-\partial_{2}u_{2}  & \partial_{1}u_{2}+\partial_{2}u_{1} \\
				\partial_{3} u_{1} & \partial_{3}u_{2} \\ \vdots & \vdots \\ \partial_{n}u_{1} & \partial_{n}u_{2} \\
				\hline
				\nabla u_{3} & \mathbf{0} \\ \vdots & \vdots \\ \nabla u_{N} & \mathbf{0} 
			\end{array}
			\right], 
		\end{align}
		where $\mathbf{0}$ denotes the zero vector in $\R^{n}$. As established in \cite[Counterexample 3.4]{GR}, this operator serves as an example of an elliptic operator being canceling yet failing to be $\mathbb{C}$-elliptic. However, $\A$ is boundary elliptic in every direction $\nu\in\text{span}\{e_{3},...,e_{n}\}$. Based on this operator, boundary elliptic, non-$\mathbb{C}$-elliptic operators of arbitrary order can be constructed: In fact, if $\mathbb{B}$ is a $(k-1)$th order, $\mathbb{C}$-elliptic differential operator on $\R^{n}$ from $W=\R^{((N-1)n-1)\times 2}$ to some finite dimensional real vector space, then $\mathbb{B}\A$ is of $k$th order, boundary elliptic in all directions $\nu\in\mathrm{span}\{e_{3},...,e_{n}\}$ but non-$\mathbb{C}$-elliptic. 
	\end{example}

\end{document}